\newcommand{\annotation}[1]{\marginpar{\tiny #1}}
\newcommand{\blue}[1]{\textcolor{blue}{#1}}
\newcommand{\Z}{\mathbb Z}
\newcommand{\Q}{\mathbb Q}
\newcommand{\R}{\mathbb R}
\newcommand{\N}{\mathbb N}
\newcommand{\cok}{\text{cok}}
\newcommand{\mapt}[1]{\mathsf{S}{#1}}
\numberwithin{equation}{section}
\newtheorem{theorem}[equation]{Theorem}
\newtheorem{corollary}[equation]{Corollary}
\newtheorem{lemma}[equation]{Lemma}
\newtheorem{proposition}[equation]{Proposition}
\theoremstyle{definition}
\newtheorem{definition}[equation]{Definition}
\newtheorem{example}[equation]{Example}
\newtheorem{question}[equation]{Question}
\theoremstyle{remark}
\newtheorem{remark}[equation]{Remark}
\begin{document}
% (electric-indent-mode 0) how to stop autoindent?

\title{The mapping class group of a shift of finite type}

%    Information for first author

\author{Mike Boyle and Sompong Chuysurichay}

\address{
Department of Mathematics, University of Maryland, College
Park, MD 20742-4015, USA
}
\email{mmb@math.umd.edu}

\address{Algebra and Applications Research Unit, Department of Mathematics and Statistics, Prince of
Songkla University, Songkhla, Thailand 90110}
\email{sompong.c@psu.ac.th}

%    General info
\subjclass[2010]{Primary 37B10; Secondary 20F10, 20F38.}

%%% 20E26 in group theory:
%%% residual properties and generalizations;  residually finite groups 

%%% 20F38 in group theory is:
%%% other groups related to topology or analysis

%%% 20F10 in group theory is:
%%% word problems, other decision problems, connections with logic and automata

%\date{\today.}

%November 2017

\keywords{shift of finite type, flow equivalence, mapping class group,
automorphism group}

\dedicatory{Dedicated  to Roy Adler, in memory of his
  insight, humor and kindness}

\begin{abstract}
  Let $(X_A,\sigma_A)$ be a nontrivial irreducible
  shift of finite type (SFT), with
  $\mathcal{M}_A$ denoting its
  mapping class group:  the group of
flow equivalences of its mapping torus $\mapt X_A$, (i.e., self
homeomorphisms of
$\mapt X_A$
 which respect the direction of the
suspension flow) modulo the subgroup of flow equivalences of
$\mapt X_A$
isotopic to the identity.
We develop
and apply machinery (flow codes,
  cohomology constraints) and provide
 context
for the study of $\mathcal M_A$, and prove  results including
the following.
 $\mathcal{M}_A$ acts faithfully and
 $n$-transitively (for every $n$ in $\mathbb{N}$)
 by permutations on the set
of circles of $\mapt X_A$. The
center of $\mathcal{M}_A$ is trivial.
The outer automorphism group of $\mathcal{M}_A$ is nontrivial.
In many cases, $\text{Aut}(\sigma_A)$ admits a nonspatial automorphism. 
For every SFT
$(X_B,\sigma_B)$ flow equivalent to $(X_A,\sigma_A)$,
$\mathcal{M}_A$ contains
embedded copies of $\text{Aut}(\sigma_B)/\left<\sigma_B\right>$,
induced by return maps to invariant cross sections; but,
elements of $\mathcal M_A$
not arising from
flow equivalences with invariant cross sections are abundant.
$\mathcal{M}_A$ is countable and
has solvable word problem.
$\mathcal{M}_A$ is not residually finite.
Conjugacy classes of many (possibly all) involutions
in $\mathcal M_A$ can be classified by the $G$-flow equivalence
classes of associated $G$-SFTs, for $G=\Z/2\Z$.
There are many open questions.
\end{abstract}

\maketitle

%\secion*{This is an unnumbered first-level section head}

%This is an example of an unnumbered first-level heading.

%% The correct journal style for \specialsection is all uppercase; a known bug
%% in amsart.cls prevents this, so input must be uppercase until it is fixed.
%\specialsection*{This is a Special Section Head}
%\specialsection*{THIS IS A SPECIAL SECTION HEAD}
%This is an example of a special section head%
%%%%%%%%%%%%%%%%%%%%%%%%%%%%%%%%%%%%%%%%%%%%%%%%%%%%%%%%%%%%%%%%%%%%%%%%
%\footnote{Here is an example of a footnote. Notice that this footnote
%text is running on so that it can stand as an example of how a footnote
%with separate paragraphs should be written.
%\par
%And here is the beginning of the second paragraph.}%
%%%%%%%%%%%%%%%%%%%%%%%%%%%%%%%%%%%%%%%%%%%%%%%%%%%%%%%%%%%%%%%%%%%%%%%%

\tableofcontents

\section{Introduction}

Throughout this paper,  $T: X\to X$ denotes
 a homeomorphism of a compact zero dimensional metric space $X$,
and $\mapt (X,T)$
 is the mapping torus of $T$, which carries
a natural suspension flow (detailed definitions are in
Sec.  \ref{sec:definitions}).  We usually write $\mapt X$ for
$\mapt (X,T)$.
Let $\mathcal F(T)$ denote the group of self equivalences of
the suspension flow on $\mapt X$, i.e.,  the  homeomorphisms
$\mapt X \to \mapt X$ which map orbits
onto orbits, respecting the direction of the flow.
%Here, $h$ induces an isomorphism
Define the mapping class group of $T$,
 $\mathcal M(T)$, to be the group of isotopy classes of
elements of $\mathcal F(T)$.  By definition,
for $h$ in $\mathcal F(T)$, the class $[h]$ is
trivial in   $\mathcal M(T)$ if there is a continuous
map $\mapt X \times [0,1] \to \mapt X$, $(y,t) \mapsto h_t(y)$,  with $h_0$ the identity,
$h_1=h$ and each  $h_t$ in $\mathcal F(T)$.
Because  $X$ is zero dimensional,
this condition forces each $h_t$ to map each flow orbit to itself.
The automorphism group of $T$, $\text{Aut}(T)$, is the
group of homeomorphisms $X\to X$ which commute with $T$.

For an irreducible matrix $A$ over $\Z_+$, let
$\sigma_A: X_A\to X_A$
be the associated shift of finite type (SFT).
We say an SFT is  trivial if $X_A$ is a single finite orbit.
Let
$\mathcal M_A=\mathcal M(\sigma_A)$.
In this paper we study  $\mathcal M_A$,
the mapping class group of an
irreducible shift of finite type, introduced in \cite{Bo3}. (Several of
the results, along with  ingredients of some others,
 appeared in the Ph.D. thesis of S. Chuysurichay
\cite{Cthesis2011}.)

Homeomorphisms $T,T'$ are {\it  flow equivalent}  if the suspension
flows on their mapping tori are equivalent, i.e. there is a
homeomorphism $h: \mapt X\to \mapt X'$ mapping orbits onto orbits,
respecting the orientation of the flow.
Here, $h$ induces an isomorphism
$\mathcal M(T) \to \mathcal M(T')$.
$\mathcal M(T)$ plays 
for flow equivalence the role that $\text{Aut}(T)$ plays for
topological conjugacy.
Flow equivalence is very naturally a part of unified algebraic
framework for classifying SFTs (see e.g. \cite{Bo1}). 
A classification of SFTs up to flow equivalence is
known; the classification, and some of the ideas involved,
have been
quite useful for the stable and unital classification of Cuntz-Krieger
algebras
(e.g. \cite{Restorff,Rordam})
 and  more generally, graph $C^*$-algebras
(e.g. \cite{ERRS}).
The track record of utility for flow equivalence is
another motivation
for looking at  $\mathcal M_A$.

We will see that for a nontrivial irreducible
SFT $\sigma_A$, $\mathcal M_A$ contains naturally embedded copies of
$\text{Aut} (\sigma_B)/\left<\sigma_B\right>$, for every $\sigma_B$
flow equivalent to $\sigma_A$, where  $\left<\sigma_B\right>$ is
the subgroup consisting of the powers of $\sigma_B$.
Automorphism groups of SFTs are still poorly understood,
despite longstanding interest
(e.g. \cite{H,  BLR, KRW92});
this relation to automorphism groups  is another reason for our interest
in $\mathcal M_A$, particularly given a resurgence of interest
in automorphism groups of various symbolic systems
(e.g. \cite{CQY, CFKP, CK2016, DDMP2016, Hochman2010,
Schraudner2006,ST2015}.)

We are also interested in
 $\mathcal M_A$ as a large (though countable)
dynamically defined group.
Some such groups arising from zero dimensional  dynamics have
turned out to be quite interesting as countable groups
(e.g. \cite{GM,JM,Matui}.)
And although
the groups $\mathcal M_A$ are quite different from the mapping class
groups of surfaces,  it is not impossible that from the vast wealth
of ideas and tools in the surface case (see \cite{FM2012}) 
some  useful approach to $\mathcal M_A$ may be suggested.

We turn now to the organization of the paper. 
In Section \ref{sec:definitions}, we give background.
For a nontrivial irreducible SFT $\sigma_A$,
the action of $\text{Aut}(\sigma_A)$ on
finite invariant sets of periodic points
has been a key tool for progress (e.g. in \cite{KRW92}).
In Section \ref{sec:actions}, we show nothing like this
is available to study $\mathcal M_A$:
 for every $n\in\mathbb{N}$,
$\mathcal{M}_A$ acts $n$-transitively and faithfully
on the set of circles in $\mapt X_A$.
The  other general tool
which has proved  useful for studying
$\text{Aut}(\sigma_A)$
(especially with respect to its action on periodic points
\cite{KRW92}, via Wagoner's Strong Shift Equivalence spaces \cite{WaBAMS})
is the dimension representation, $\rho_A$. 
%(Tantalizing connections to K-theory
%\cite{Wa00} have not yet
%produced new results.)
The
analogue of $\rho_A$ for $\mathcal M_A$ is the Bowen-Franks representation,
$\beta_A$, which for a nontrivial irreducible SFT $\sigma_A$ maps
$\mathcal M_A$ onto the group of group automorphisms of
the Bowen-Franks group $\text{coker}(I-A)$ \cite{Bo3}.
Among our
questions: is the kernel of $\beta_A$ simple?
finitely generated?  sofic?

In Section \ref{sec:actions}, we also show the actions of
$\mathcal M_A$ on circles of $\mapt X$ (by permutations) and
on $\check{H}^1(\mapt X)$ are faithful, and
prove
an analogue of Ryan's Theorem for 
$\text{Aut}(\sigma_A)$:
the center of $\mathcal{M}_A$ is trivial.

In Section \ref{sec:autos}, we  show $\mathcal M_A$ has a
nontrivial outer automorphism group, and
(extending work of \cite{mbwk:amsess})  
for  many  mixing SFTs $\sigma_A$ construct a
group isomorphism $\text{Aut}(\sigma_A)\to \text{Aut}(\sigma_A)$
which is not spatial: i.e., is not induced by a homeomorphism.
We also show that spatial  isomorphism of sufficiently
rich subgroups is enough to imply flip conjugacy.

In Section \ref{sec:sections}, we describe how
flow equivalences $\mapt X\to \mapt X$ with invariant cross sections are the
flow equivalences induced by automorphisms of maps $S$
flow equivalent to $T$,
and
show that by this correspondence
$\mathcal{M}_A$ contains embedded copies of
$\text{Aut}(\sigma_B)/\left<\sigma_B\right>$ for any SFT $(X_B,\sigma_B)$
flow equivalent to $(X_A,\sigma_A)$.
Appealing to a general extension result from \cite{BCEbce},
we also show that 
for any
nontrivial irreducible
SFT $(X_A, \sigma_A)$,
 there is an abundant supply of elements in $\mathcal M_A$
containing no
flow equivalence with an invariant cross section.
We also give a  concrete example of such an element, 
 not appealing to an extension theorem,

In Section \ref{sec:residual},
we show that $\mathcal{M}_A$ is not residually finite.
In Section \ref{sec:solvable}, we show that 
$\mathcal{M}_A$ has  solvable word problem.
In Section \ref{Conjugacyclassesofinvolutions}, 
we give results on conjugacy classes of involutions
in $\mathcal M_A$ by establishing a connection to
the theory of $\Z_2$-SFTs. For example,
if $\det(I-A)$ is odd, then only finitely many
conjugacy classes in $\mathcal M_A$ can contain fixed
point free involutions. 

At points in the paper we make use of flow codes,
a flow
analogue of block codes, introduced in
\cite{BCEfei}. For Section \ref{sec:solvable},
we also need to address composition of
flow codes up to isotopy. The background and new work
on flow codes is given in  Appendix \ref{sec:flowcodes}.

In the course of the paper we make explicit several of the many
open questions about $\mathcal M_A$.

\section{Definitions and background}\label{sec:definitions}

%$\mathcal{M}_A$ is
%even more complicated than $\text{Aut}(\sigma_A)$, although it is
%still countable.

There is more detailed background on the material below in
  \cite{LM}  (for Sec. 2.1),
\cite{BCEfei} (for Secs. 2.2 and 2.3) and \cite{Bo3} (for
Secs. 2.4 and 2.5).

\subsection{Shifts of Finite Type}

Let $A$ be an $n\times n$ nonnegative integral matrix. $A$ can be
viewed as an adjacency matrix of a finite directed graph $G$ with
$n$ ordered vertices and a finite edge set $E$ and $A_{ij}$ is the
number of edges from vertex $i$ to vertex $j$. Let $X_A$ be the
subspace of $E^{\mathbb{Z}}$ consisting of bi-infinite sequences
$(x_i)$ such that for all $i\in\mathbb{Z}$, the terminal vertex of
$x_i$ is the initial vertex of $x_{i+1}$. Then with the subspace
topology from the product topology of $E^{\mathbb{Z}}$, $X_A$ is a
compact metrizable space and the \textit{shift map} $\sigma_A$
defined by the rule $(\sigma_A(x))_i=x_{i+1}$ is a homeomorphism
from $X_A$ to $X_A$. $(X_A,\sigma_A)$ is a \textit{shift of finite
type (SFT) defined by $A$}.  In general an SFT is any dynamical
system topologically conjugate to some $(X_A,\sigma_A)$;
  in addition, $A$ can be chosen nondegenerate (no zero row or column).
An SFT
$(X_A,\sigma_A)$ is \textit{irreducible} if it has a dense forward
orbit; it is \textit{trivial} if $X_A$ is a finite set.  For $A$ nondegenerate,  $(X_A,\sigma_A)$ is irreducible if and only
if $A$ is an irreducible matrix. If $A$ is irreducible, then
$(X_A,\sigma_A)$ is trivial if and only if $A$ is a
cyclic permutation
matrix.

\subsection{Suspensions, Cross Sections, and Flow Equivalences}
For a homeomorphism
$T:X\to X$, we define its  \textit{mapping
torus} $\mapt (X,T)=\mapt X$ to be the quotient space
$ (X \times \R)/ \sim\ $,
where $(x,t)\sim (T^n(x),t-n)$ for $n\in \Z$ and $t\in \R$.
We write the image of $(x,t)$ in $\mapt X$ as $[x,t]$.
An element of $\mapt X$
may be
represented as  $[x,t]$ for a unique $x$ in $X$ and $t$ in $[0,1)$.
   For any
$s\in\mathbb{R}$, the \textit{suspension flow}
$\alpha : \mapt X \times \R \to \mapt X $
is defined by $([x,t],s) \mapsto \alpha_s([x,t])=[x,s+t]$.
%Two discrete dynamical systems are \textit{flow equivalent} if the
%corresponding suspensions are conjugate as flows.
%Define $\pi_T:X\times\mathbb{R}\to \mapt X$ by
% $$ \pi_T((x,t))=[x,t] $$
%for all $x\in X$ and $t\in\mathbb{R}$.}
% Distinct equivalence classes may be
%represented uniquely by $\{[x,t]:x\in X,0\leq t< f(x)\}$. For
%any $s\in\mathbb{R}$, the \textit{suspension flow} $\alpha$ on
%$Y_{f,T}$ is defined by $\alpha_s([x,t])=[T^n(x),s+t-f_n(x)]$
%where $n\in\mathbb{Z}$ is such that $f_n(x)\leq s+t<f_{n+1}(x)$.
%If $f\equiv 1$ on $X$ then $Y_{f,T}$ is called \textit{the mapping
%torus of $(X,T)$} and is denoted by $\mapt X$. The suspension flow
%$\alpha$ on $\mapt X$ can be simply defined by
%$\alpha_t([x,s])=[x,s+t]$ for any $t\in\mathbb{R}$.
%$X\times\mathbb{R}$ carries the \lq\lq vertical\rq\rq flow,
%$\widetilde{\alpha}$, for which $\widetilde{\alpha_s}:(x,t)\mapsto
%(x,t+s)$. The rule $(x,t)\mapsto [x,t]$ defines a surjective local
%homeomorphism $\pi_T:X\times\mathbb{R}\to \mapt X$ which intertwines
%the vertical and suspension flows.
Two discrete dynamical systems
$(X,T)$ and $(X',T')$ are \textit{flow equivalent} if there is a
homeomorphism $F:\mapt X\to \mapt X'$  mapping flow orbits onto
flow orbits, respecting the direction of the flow.
$F$ is called a \textit{flow equivalence}.
Any conjugacy of
discrete dynamical systems induces a
topological conjugacy of the corresponding suspension flows
(and this is a
flow equivalence), but in general flow equivalence is a much
weaker equivalence relation.

A \textit{cross section} $C$ of the suspension flow $\alpha$ on
$\mapt X$ is a closed set of $\mapt X$ such that $\alpha
:C\times\mathbb{R}\to \mapt X$ is a local homeomorphism onto $\mapt X$
\cite{Schwartzman57}. It
follows that every orbit hits $C$ in forward time and in backward
time, \textit{the first return time} defined by
$f_c(x)=\inf\{s>0:\alpha_s(x)\in C\}$ is continuous and strictly
positive on $C$, and \textit{the first return map} $\rho_c:C\to C$
defined by $\rho_c(x)=\alpha_{f_c(x)}(x)$ is a homeomorphism.
Discrete systems $(X,T)$ and $(X',T')$ are flow equivalent if and
only if there is a flow $Y$ with two cross sections whose return
maps are conjugate respectively to $T$ and $T'$.

We define the \textit{mapping
class group} of $T$, denoted by $\mathcal{M}(T)$, to be the group of
flow equivalences $\mapt X\to \mapt X$ modulo the subgroup of flow
equivalences which are isotopic to the identity
in $\mathcal F(T)$.
Two
flow equivalences $F_0,F_1:\mapt X \to \mapt X'$ are \textit{isotopic} if
$[(F_1)^{-1}F_0]$ is trivial in $\mathcal M(T)$.
By definition,
for $h$ in $\mathcal F(T)$, the class $[h]$ is
trivial in   $\mathcal M(T)$ if there is a continuous
map $\mapt X \times [0,1] \to \mapt X$, $(y,t) \mapsto h_t(y)$,  with $h_0$ the identity,
$h_1=h$ and each  $h_t$ in $\mathcal F(T)$.
% Abusing notation,
%given a flow equivalence $F:\mapt X\to \mapt X,$ we sometimes refer to $F$
%(rather than its equivalence class $[F]$) as an element of
%$\mathcal{M}(T)$.
When $T$ is a shift of finite type $\sigma_A$, we may write
$\mathcal F_A$ for $\mathcal F(T)$ and
$\mathcal M_A$ for $\mathcal M (T)$.

\begin{question} \label{splitquestion}
  Does the epimorphism $\mathcal F(\mapt X_A) \to \mathcal M_A$ given
  by $F \mapsto [F]$
split?
\end{question}

Question \ref{splitquestion} asks whether
$\mathcal M_A$ can  be presented as a group of homeomorphisms.
%(The epimorphism from the orientation preserving homeomorphisms
%of a surface to its mapping class group need not split \cite{FM2012}.)

%%%%%%%%%%%%%%%%%%%%%%%%%%%%%%%%%%%%%%%%%%%%%%%%%%%%%%%%%%%%%%%%%%%%%%%%%%%
\subsection{The Parry-Sullivan Argument}

 A {\it discrete cross section} for
a homeomorphism $T: X\to X$ is a closed subset $C$  of
$X$ with a continuous function
$r \colon  C \to  \N$ such that
$r(x) = \min \{k \in \mathbb{N} : T^k(x) \in C\} \text{ and } X = \{T^k(x) : x
\in C,k \in \mathbb{N} \}$.
When $X$ is zero dimensional,
the set $C$ must be  clopen in $X$,
by continuity of the return time function
$r$.

The argument of Parry and Sullivan in \cite{PS} shows the following.

\begin{theorem}\label{pstheorem}  (\cite{PS}; see
\cite[Theorem 4.1]{BCEfei}) Suppose
$Y,Y'$ are one dimensional compact metric spaces
with fixed point free flows $\gamma, \gamma'$
for which $C,C'$
are zero dimensional cross sections, with return maps
 $\rho_C, \rho_{C'}$.
Suppose
$h: Y \to Y'$ is a flow equivalence.

Then there are
discrete cross sections $D,D'$ for $\rho_C, \rho_{C'}$,
with 
  $D\subset C$ and $D' \subset C'$, 
such that $h^{-1}(D')=F(D)$ for some $F$ isotopic to the
  identity,  and    
$h$ is isotopic to a homeomorphism $Y\to Y'$ induced by a
topological conjugacy
$(D,\rho_D ) \to
(D',\rho_{D'})$.
\end{theorem}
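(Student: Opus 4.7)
The plan is to transport $C$ into $Y'$ via $h$, reducing the problem to matching two zero-dimensional cross sections of the single flow $\gamma'$ on $Y'$, and then to apply the classical Parry--Sullivan common-refinement construction to produce clopen ``common sub-cross-sections'' of $h(C)$ and $C'$ identified by a bounded continuous flow-time shift.

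First I would observe that $h(C)$ is a zero-dimensional cross section of $\gamma'$ with return map conjugate to $\rho_C$ via $h|_C$; the rest of the argument stays inside $Y'$. For $y \in h(C)$ let $s(y)\ge 0$ be the first forward-hit time to $C'$ and set $\phi(y)=\gamma'_{s(y)}(y) \in C'$; for $y'\in C'$ let $\psi(y')\in h(C)$ be the most recent backward $h(C)$-hit. Both $s,\phi,\psi$ are continuous and $s$ is bounded by compactness of $Y'$. Define
\[
 E \;=\; \{\, y \in h(C) : \psi(\phi(y)) = y \,\}, \qquad E' \;=\; \phi(E) \subset C'.
\]
Equivalently, $E = \{\, y : n(y)=0\,\}$, where $n(y)$ is the number of $h(C)$-hits strictly between $y$ and $\phi(y)$; this integer-valued function is locally constant on the zero-dimensional space $h(C)$, so $E$ is clopen and $\phi$ restricts to a homeomorphism from $E$ onto a clopen subset $E'$ of $C'$. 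Along any $\gamma'$-orbit the $h(C)$- and $C'$-points interleave, so between any two consecutive $C'$-points lies a last $h(C)$-point, which belongs to $E$ by construction. Hence $E$ and $E'$ meet every orbit and are discrete cross sections for $\rho_{h(C)}$ and $\rho_{C'}$ respectively, and $\phi|_E$ conjugates the induced return maps.

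Next I would realize the finite-time shift $y \mapsto \gamma'_{s(y)}(y)$ on $E$ by an ambient isotopy. Extend $s$ to a bounded continuous function on a clopen flow-box neighborhood of $E$ and taper it to $0$ toward the boundary along orbits; the family $H_t(z) = \gamma'_{t\cdot s(z)}(z)$ for the tapered $s$ is a path in $\mathcal F$ from $\mathrm{id}_{Y'}$ to an $H_1$ that carries $E$ onto $E'$. Then $H_1 \circ h$ is isotopic to $h$ through flow equivalences; setting $D = h^{-1}(E) \subset C$, $D' = E' \subset C'$, and $F = h^{-1} \circ H_1 \circ h$, we obtain $F$ isotopic to the identity with $F(D) = h^{-1}(D')$, while $(H_1 \circ h)|_D$ is a topological conjugacy $(D,\rho_D) \to (D',\rho_{D'})$ that extends uniquely to a flow equivalence $Y \to Y'$ in the isotopy class of $h$.

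The hard part will be the clopenness and orbit-meeting properties of $E$, which rest essentially on zero-dimensionality: without it, the integer-valued function $n$ need not be locally constant, and $\phi(E)$ need not be clopen in $C'$. A secondary technicality is constructing the tapered isotopy $\{H_t\}$ so that each $H_t$ genuinely lies in $\mathcal F$ (i.e.\ is orbit-preserving) and equals the identity outside a bounded flow-neighborhood of $E$; this requires the flow-box tapering because the shift $s$ on $E$ does not in general extend continuously to all of $Y'$. Once those two points are in hand, the identifications $D = h^{-1}(E)$, $D' = E'$, and $F = h^{-1} \circ H_1 \circ h$ give the conclusion by direct computation.
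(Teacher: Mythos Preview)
The paper gives no proof of this theorem; it cites \cite{PS} and \cite{BCEfei}. Your construction is the standard Parry--Sullivan common refinement and matches the discrete analogue the paper does spell out in Proposition~\ref{pslemma}: your set $E$ is exactly $K(h(C),C')$ there, and $\phi|_E$ is the map $\delta$.

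There is, however, a real gap in the clopenness step. You assert that $s,\phi,\psi$ are continuous and that the integer count $n$ is therefore locally constant, but this breaks down at points of $h(C)\cap C'$: the first-forward-hit time $s$ is discontinuous at $y\in h(C)\cap C'$ whenever $y$ is approached within $h(C)$ by points whose nearby $C'$-hit lies slightly \emph{behind} them along the flow. Concretely, take $Y'=K\times S^1$ with $K$ a Cantor set, $h(C)=K\times\{0,\tfrac12\}$, and $C'=\{(x,g(x)):x\in K\}$ for a continuous $g\colon K\to(\tfrac14,\tfrac34)$ whose level set $g^{-1}(\tfrac12)$ is not clopen; then one computes $E\cap(K\times\{0\})=\{g<\tfrac12\}\times\{0\}$, which is open but not closed. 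The standard repair is to first perturb one cross section along the flow---using zero-dimensionality and compactness to choose a locally constant shift avoiding the finitely many nearby $h(C)$-hit times on each piece of a clopen partition---so that $h(C)\cap C'=\emptyset$; this changes nothing up to isotopy, and then your argument goes through cleanly.

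A smaller point: your isotopy $H_t(z)=\gamma'_{t\,s(z)}(z)$ via a ``tapered'' extension need not be a homeomorphism for each $t$, since along an orbit one needs $r\mapsto r+t\,s(\gamma'_r(\cdot))$ strictly increasing. The cleaner route (implicit in the paper's use of \cite[Theorem~3.1]{BCEfei}) is to let the conjugacy $\delta\colon E\to E'$ induce the canonical flow equivalence on $Y'$, note it has the form $z\mapsto\gamma'_{\beta(z)}(z)$ for a continuous $\beta$ interpolated linearly between successive $E$-returns, and conclude it is isotopic to the identity.
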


Theorem \ref{pstheorem} is implicit in the succinct paper \cite{PS};
see \cite{BCEfei}  for full details, generalization and related
examples.

 As a consequence of Theorem \ref{pstheorem}, we have
the following fact.

\begin{corollary}\label{theorem:224}
The mapping class group of a subshift $(X,\sigma)$
  is countable.
\begin{proof}
Let $Y$ be the mapping torus of $X$.
For any discrete cross section $D$ for $(X,\sigma )$, the
system $(X, \rho_D )$ is expansive and therefore topologically
conjugate to a subshift. By Theorem \ref{pstheorem}, up to
isotopy a flow  equivalence $Y \to Y$ is determined by the
choice of clopen sets $D,D'$ and a topological conjugacy
$(D,\rho_D ) \to
(D',\rho_{D'})$ (which can be defined by a block code). There are only
countably many clopen sets in $D$ and only countably  many block
codes.
%
%By Theorem~\ref{theorem:223}, a flow equivalence (up to isotopy)
%can be obtained by a conjugacy followed by a series of state
%stretchings or state contractings and followed by a conjugacy.
%There are only countably many ways to obtain each step. Thus
%$\mathcal{M}_A$ is a subset of the product of $3$ countable sets.
%Therefore, $\mathcal{M}_A$ is countable.
Therefore the mapping class group of $(X,\sigma) $ is countable.
\end{proof}
\end{corollary}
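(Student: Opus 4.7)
The plan is to apply Theorem \ref{pstheorem} to reduce each isotopy class of flow equivalence to a triple of combinatorial data, each drawn from a countable set. Fix $Y = \mapt X$. Since $X$ is itself a zero-dimensional cross section of the suspension flow, I take $C = C' = X$ in Theorem \ref{pstheorem}. Given any flow equivalence $h : Y \to Y$, that theorem produces clopen discrete cross sections $D, D' \subset X$ together with a topological conjugacy $\varphi : (D, \rho_D) \to (D', \rho_{D'})$ such that $h$ is isotopic to the flow equivalence induced by $\varphi$. Consequently, the assignment sending a triple $(D, D', \varphi)$ to the isotopy class of the induced flow equivalence surjects onto $\mathcal{M}(\sigma)$.

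Next I would bound the cardinality of the set of admissible triples. The space $X$, being compact, metrizable and zero-dimensional, has a countable basis of clopen sets; every clopen subset of $X$ is a finite union of basic clopen sets, so there are only countably many candidates for $D$ and for $D'$. For each such pair, the return maps $\rho_D$ and $\rho_{D'}$ are expansive and therefore topologically conjugate to subshifts; by the Curtis--Hedlund--Lyndon theorem any topological conjugacy between subshifts is given by a block code, and there are only countably many block codes over a finite alphabet. A countable union of countable sets being countable, the set of triples $(D, D', \varphi)$ is countable, and hence so is $\mathcal{M}(\sigma)$.

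There is no real obstacle here, because the deep content has been packaged into Theorem \ref{pstheorem}; the corollary is a clean bookkeeping consequence. The only step I would verify with some care is that no hidden continuous parameter creeps into the description of $\varphi$, so that the two countability ingredients (clopen subsets of $X$ and block codes between finite-alphabet subshifts) truly exhaust the data modulo isotopy. The Curtis--Hedlund--Lyndon reduction to block codes is precisely what closes that potential gap.
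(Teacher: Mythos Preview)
Your proof is correct and follows essentially the same approach as the paper: apply Theorem~\ref{pstheorem} with $C=C'=X$ to reduce each isotopy class to a triple $(D,D',\varphi)$, then observe that the clopen subsets of $X$ and the block codes between the resulting subshifts are each countable. Your write-up is somewhat more explicit (naming Curtis--Hedlund--Lyndon and justifying countability of the clopen sets), but the argument is the same.
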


For a simple example in contrast to Corollary \ref{theorem:224},
note that $\mathcal M(T)$
is uncountable if
 $T$ is the identity map on a Cantor set.

%%%% \cite[Example 5.4]{BCEfei} notes a symbol expansion gives a flow
%%%% equivalence (as a "word code").

%%%%%%%%%%%%%%%%%%%%%%%%%%%%%%%%%%%%%%%%%%%%%%%%%%%%%%%%%%%%%%%%%%%%%%%%%%%
\subsection{Positive Equivalence}

Let $A$ and $B$ be irreducible matrices. We embed $A$ and $B$ to
the set of \textit{essentially irreducible} infinite matrices over
$\mathbb{Z}_+$, those which have only one irreducible component.
Within the \lq\lq
positive K-Theory\rq\rq approach to symbolic dynamics
\cite{Bo1,BoW04,Wa00},
there is the general \lq \lq positive equivalence\rq\rq\
method for constructing flow equivalences for SFTs
(developed in
\cite{Bo1}, building on Franks' work \cite{F1}).
({\it Flow codes,} a flow equivalence analogue of
  block codes developed in
\cite{BCEfei}, give a  general presentation of flow equivalences
up to isotopy
for subshifts.) A \textit{basic elementary matrix} $E$ is a matrix in
$\text{SL}(\mathbb{Z})$ which has off-diagonal entry $E_{ij}=1$
where $i\neq j$ and $1$ on the main diagonal and $0$ elsewhere. We
define four \textit{basic positive equivalences} as follows:
suppose $A_{ij}>0,$
\begin{align*}
(E,I) &: I-A\to E(I-A), &(E^{-1},I) : E(I-A)\to I-A \\
(I,E) &: I-A\to (I-A)E, &(I,E^{-1}) : (I-A)E\to I-A.
\end{align*}
A \textit{positive equivalence} is the composition of basic
positive equivalences $(E_i,F_i)$,
$(U,V) = (E_k\cdots E_1,F_1\cdots F_k)$.
 We will only discuss the flow equivalence
induced by the basic positive equivalence $(E,I):I-A\to E(I-A)$.
We can apply the same idea with the others. Define $A'$ from the
equation $E(I-A)=I-A'$. Then $A$ and $A'$ agree except in row $i$,
where we have
\begin{align*}
A'_{ik} & = A_{ik}+A_{jk}\,\,\text{if}\,\, j\neq k, \,\,\text{and} \\
A'_{ij} & = A_{ij} + A_{jj} - 1.
\end{align*}
Let $\mathcal{G}_A$ be a directed graph having $A$ as the
adjacency matrix with edge set $\mathcal{E}_A$. We can describe a
directed graph $\mathcal{G}_{A'}$ which has $A'$ as its adjacency
matrix as follows. Pick an edge $e$ which runs from a vertex $i$
to a vertex $j$ in $\mathcal{G}_A$($e$ exists because $A_{ij}>0$
by assumption). The edge set $\mathcal{E}_{A'}$ will be obtained
from $\mathcal{E}_A$ as follows:

a) remove $e$ from $\mathcal{E}_A$.

b) For each vertex $k$, for every edge $f$ in $\mathcal{E}_A$
from $j$ to $k$ add a new edge named $[ef]$ from $i$ to $k$.

Let $\mathcal{E}_A^*$ be the set of new edges obtained from the
above construction. Define a map $\gamma: \mathcal{E}_{A'}\to
\mathcal{E}_A^*$ by $\gamma (f)=f$ and $\gamma([ef])=ef$. Then
$\gamma$ induces a map $\widehat{\gamma}: X_A\to X_{A'}$, defined
by the rule $$\widehat{\gamma}: \cdots
x'_{-2}x'_{-1}.x'_0x'_1\cdots\mapsto
\cdots\gamma(x'_{-2})\gamma(x'_{-1}).\gamma(x'_0)\gamma(x'_1)\cdots$$
Define $F_{\gamma} : \mapt X_{A'}\to \mapt X_A$ by
setting, for $x$ in $X_{A'}$ and $0\leq t < 1$,
$$F_{\gamma}([x,t]) =
\begin{cases}
[\widehat{\gamma}(x),2t], & \mbox{if }x\in X_{[ef]}\mbox{ for every
  edge of the form $[ef]$} \\
[\widehat{\gamma} (x),t],  & \mbox{otherwise .}
%\mbox{if }x\in X_e\mbox{ for every single edge $e$} \\
\end{cases}$$
where for an edge $d$ in
$\mathcal E_{A'}$, $X_d =\{x\in X_A: x_0= d \}$.
% for any edge $e\in\mathcal{E}^*$.

Then $F_{\gamma}$ is a flow equivalence
(in particular, surjective, even though
$\widehat{\gamma}$ is not).

\subsection{The Bowen-Franks representation}

The
\textit{Bowen-Franks group}
of an $n\times n$ integral matrix $A$ is
$\text{coker}(I-A)=\mathbb{Z}^n/(I-A)\mathbb{Z}^n$. For a shift of
finite type $(X_A,\sigma_A)$, Parry and Sullivan \cite{PS}
showed $\det(I-A)$
is an invariant of flow equivalence,
Bowen and Franks \cite{BowF1}  showed
$\text{coker}(I-A)$ is an invariant of
flow equivalence, and Franks \cite{F1} showed
these invariants are complete
for nontrivial irreducible
shifts of finite type.
There is a complete classfication of general SFTs
up to flow equivalence,
due to Huang \cite{Bo3,BoHuang}, but the general
invariant is much more complicated.
% defined by matrices $A, B$ are
%flow equivalent if and only if $\det(I-A)=\det(I-B)$ and
%the groups $\text{coker}(I-A)$, $\text{coker}(I-B)$ are isomorphic
%\cite{F1}. Here isomorphism of the cokernel groups implies
%$|\det(I-A)|=|\det(I-B)|$.

Let $(X_A,\sigma_A)$ be a nontrivial irreducible shift of finite type.
Let $(U,V):(I-A) \to
U(I - A)V=I-A$  be a positive equivalence and
let $ F_{(U,V) }$ be an associated flow equivalence.
(There can be many factorizations of $(U,V)$ into
basic positive equivalences, and they can  define
isotopically distinct flow equivalences.)
We
define $F^*_{(U,V)}: \text{coker}(I -A) \to  \text{coker}(I -A)$
 by the rule $[u] \to [uV]$  (we use the $(U,V )$
action on row vectors to define $\text{coker}(I - A)$.) Then $F^*_{(U,V)}$
is an isomorphism.
%Given any flow equivalence $F : \mapt X_A \to  \mapt X_A$, there is a positive
%equivalence
%$(U,V) : (I-A)\to (I-A)$ such that $F =F_{(U,V)}$, up to isotopy.
Let $\text{Aut}(\text{coker}(I-A))$
denote the group of group automorphisms of $\text{coker}(I - A)$. We define the map
$\rho_A : \mathcal M_A \to  \text{Aut}(\text{coker}(I -A))$
 by the rule $\rho_A : F_{(U,V)} \to F^*_{(U,V)}$ and call $\rho_A$  the {\it
   Bowen-Franks representation} (in \cite{Bo3}, this is called the isotopy
futures representation).  It was proved in \cite{Bo3}  that this rule  gives
a well defined group epimorphism. In
contrast, it was proved in \cite{KRW92} that there can be automorphisms of the
dimension module of
$( X_A,\sigma_A)$ (as an ordered module)   which are not induced by any element of
$\text{Aut}(X_A)$.

%%%%%%%%%%%%%%%%%%%%%%%%%%%%%%%%%
\section{Actions, representations and group isomorphisms} \label{sec:actions}

%\blue{Suppose $(X_A,\sigma_A)$ is an irreducible shift of finite type
%  and $F\colon \mapt X_A\to \mapt X_A$ is a flow equivalence. Then there is a
%homeomorphism
%$\widetilde{F}:X_A\times\mathbb{R}\to X_A\times\mathbb{R}$ such that the diagram
%$$\begin{CD}
%X_A\times\mathbb{R} @>\widetilde{F}>> X_A \times\mathbb{R}\\
%@VV \pi_{T} V @VV \pi_{T'} V\\
%\mapt X_A @>F>> Y_{A}
%\end{CD}$$
%commutes.} Such a function $\widetilde{F}$ is called a \bfseries{lift of $F$}.
%\end{theorem }
%\sout{\end{theorem } ...\end{proof}}
%

%\begin{definition}
%A \textit{circle} in $\mapt X_A$ is a flow orbit of a periodic point in $X_A$.
%\end{definition}

The following result is fundamental for studying
the mapping class group of an irreducible SFT.
\begin{theorem}\label{theorem:231}
Suppose $(X_A,\sigma_A)$ is an irreducible SFT and
$F\in\mathcal{F}_A$. Then the following are equivalent.
\begin{enumerate}
\item $F$ is isotopic to the identity.

\item $F(\mathcal{O})=\mathcal{O}$ for all suspension flow orbits $\mathcal{O}$ in $\mapt X_A$.

\item $F(\mathcal{C})=\mathcal{C}$ for all circles

\item $F(\mathcal{C})=\mathcal{C}$ for all but finitely many circles
$\mathcal{C}$ in $\mapt X_A$.
\end{enumerate}
\begin{proof}
The implications $(1) \implies (2) \implies (3)\implies (4) $
hold generally, i.e. with $(X,T)$ in place of  $(X_A, \sigma_A)$, for $T$ a
zero dimensional compact metric space $X$. In the case that
$(X,T)$ is an irreducible SFT, the implication $(2)\implies (1)$ is
\cite[Theorem 6.2]{BCEfei}. Given (3), it follows from
\cite[Theorem 6.1]{BCEfei} that the flow equivalence $F$ up to isotopy
is
induced by an automorphism of the irreducible SFT. As recalled
in the proof of  \cite[Theorem 6.2]{BCEfei}, an
automorphism of an irreducible SFT which fixes all (or even
all but finitely many) orbits must be a power of the shift
\cite[Theorem 2.5]{BK2}.

It remains to show $(4)\implies (3)$. Suppose $U$
is a word such that $\dots UUU \dots$ 
represents a periodic orbit of the irreducible SFT $\sigma_A$
such that for the corresponding circle
$\mathcal C(U)$ 
in $\mapt X_A$, $F(\mathcal C(U))\neq \mathcal C(U)$.  Then one can construct
a word $W$ 
such that for all positive integers $n$,  the words
$WU^n$ represent distinct periodic orbits, with
$F(\mathcal C(WU^n)) \neq \mathcal C(WU^n)$.
So, if $F$ moves one circle outside itself, then $F$ moves infinitely
many circles to different circles, and therefore $(4) \implies (3)$.
\end{proof}
\end{theorem}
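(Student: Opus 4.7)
My plan is to close the cycle of implications as $(1)\Rightarrow(2)\Rightarrow(3)\Rightarrow(4)\Rightarrow(3)\Rightarrow(1)$; the parallel route $(2)\Rightarrow(1)$ can be handled by the same machinery. For $(1)\Rightarrow(2)$, note that an isotopy $h_t$ from the identity to $F$ in $\mathcal F_A$ gives, for each flow orbit $\mathcal O$, a continuous path $t\mapsto h_t(\mathcal O)$ of orbits starting at $\mathcal O$. Because $X_A$ is zero dimensional, any small flow box has the form $C\times(-\epsilon,\epsilon)$ with $C$ totally disconnected, so the local space of orbits is zero dimensional and the path of orbits must be constant; hence $h_t(\mathcal O)=\mathcal O$ for all $t$. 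The implications $(2)\Rightarrow(3)\Rightarrow(4)$ are tautological, since a circle is a particular kind of orbit.

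For the substantive direction $(3)\Rightarrow(1)$, I would invoke \cite[Theorem 6.1]{BCEfei} to produce, from a circle-fixing $F$, an automorphism $\phi\in\text{Aut}(\sigma_A)$ whose induced flow equivalence is isotopic to $F$. Such $\phi$ fixes every periodic orbit of $\sigma_A$ setwise; by the rigidity result \cite[Theorem 2.5]{BK2} for irreducible SFTs this forces $\phi=\sigma_A^n$ for some integer $n$, and $\sigma_A^n$ induces the identity flow equivalence up to isotopy. The parallel implication $(2)\Rightarrow(1)$ would be handled by \cite[Theorem 6.2]{BCEfei}: if $F$ preserves every orbit, then the displacement function $s(y)$ defined locally by $F(y)=\alpha_{s(y)}(y)$ can be globally chosen continuously on $\mapt X_A$ using the flow-box product structure, and the straight-line isotopy $h_t(y)=\alpha_{ts(y)}(y)$ connects the identity to $F$ through flow equivalences. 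The subtlety here is the matching of local time-shift representatives on circle orbits, where $s$ is only defined modulo the period.

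The genuinely new step, and what I expect to be the main obstacle, is $(4)\Rightarrow(3)$. Suppose a circle $\mathcal C(U)$ corresponding to the periodic word $U$ is moved by $F$. Because $A$ is irreducible, there is a fixed word $W$ so that $WU^n$ is an admissible periodic word for every large $n$, and the minimal period of $WU^n$ is $|W|+n|U|$ for large $n$, so the resulting circles $\mathcal C(WU^n)\subset\mapt X_A$ are pairwise distinct. As $n\to\infty$ these circles Hausdorff-converge to $\mathcal C(U)$, and by continuity of $F$ on the hyperspace of compact subsets, $F(\mathcal C(WU^n))$ converges to $F(\mathcal C(U))$. Since $\mathcal C(U)$ and $F(\mathcal C(U))$ are disjoint closed sets, they admit disjoint open neighborhoods, so for all sufficiently large $n$ the circles $\mathcal C(WU^n)$ and $F(\mathcal C(WU^n))$ are disjoint, in particular distinct. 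Hence $F$ moves infinitely many circles. The delicate part is choosing $W$ so that the minimal periods of $WU^n$ really do grow, ensuring the $\mathcal C(WU^n)$ are distinct circles rather than repeated coverings of the same orbit.
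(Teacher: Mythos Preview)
Your overall architecture matches the paper's: the same trivial chain $(1)\Rightarrow(2)\Rightarrow(3)\Rightarrow(4)$, the same citations to \cite[Theorems 6.1 and 6.2]{BCEfei} and \cite[Theorem 2.5]{BK2} for $(3)\Rightarrow(1)$ and $(2)\Rightarrow(1)$, and the same strategy for $(4)\Rightarrow(3)$ of producing from one moved circle $\mathcal C(U)$ an infinite family $\mathcal C(WU^n)$ of moved circles.

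There is, however, a genuine gap in your $(4)\Rightarrow(3)$ argument. The claim that the circles $\mathcal C(WU^n)$ Hausdorff-converge to $\mathcal C(U)$ is false. For each $n$, the circle $\mathcal C(WU^n)$ contains points centered in the $W$-block, i.e.\ of the form $[\sigma^{j}((WU^n)^\infty),0]$ with $0\le j<|W|$, and these stay at a fixed positive distance from $\mathcal C(U)$ since locally they look like $\dots U\,W\,U\dots$ rather than $\dots UUU\dots$. Only one half of the Hausdorff convergence holds (every point of $\mathcal C(U)$ is approximated), so your disjoint-neighborhood step does not follow as written.

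The idea is easily salvaged by a pointwise argument in place of the hyperspace one. Choose $p_n\in\mathcal C(WU^n)$ centered in the middle of the $U^n$-block, so $p_n\to p\in\mathcal C(U)$ and hence $F(p_n)\to F(p)\in F(\mathcal C(U))$. If $F(\mathcal C(WU^n))=\mathcal C(WU^n)$ held for infinitely many $n$, then along that subsequence $F(p_n)\in\mathcal C(WU^{n})$, and therefore the limit $F(p)$ lies in $\bigcap_N\overline{\bigcup_{n\ge N}\mathcal C(WU^n)}$, which is the suspended orbit closure of the point ${}^{\infty}U\,W\,U^{\infty}$. The only circle contained in that orbit closure is $\mathcal C(U)$ itself; since $F(p)$ is a periodic point, this forces $F(p)\in\mathcal C(U)$, contradicting $F(\mathcal C(U))\neq\mathcal C(U)$. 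The paper's own proof of this implication is terser --- it simply asserts that a suitable $W$ can be constructed --- so your approach, once corrected, actually supplies more detail than the paper does.
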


\begin{remark} The implication $(2)\implies (1)$ of Theorem \ref{theorem:231}
fails in general, even for some reducible SFTs and mixing sofic shifts
(see \cite[Example 6.1]{BCEfei}).
\end{remark}

Suppose $T: X\to X$ is a
homeomorphism
of a compact zero dimensional metric space.
Then $T$ acts on $C(X,\Z )$, the group of continuous
  functions from $X$ to the $\Z$, by the rule $f\mapsto f\circ T$. 
The following groups are
isomorphic:
the first \v{C}ech cohomology group  $\check{H}^1(\mapt X)$;
the group $C(X,\mathbb{Z})/(I-T)C(X,\mathbb{Z})$;
 the Bruschlinsky group $C(\mapt X, S^1)/\sim $ of continuous maps from $\mapt X$ to the circle modulo
isotopy. (For some exposition, see \cite{BH1}.)
 The group $C(X,\mathbb{Z})/(I-T)C(X,\mathbb{Z})$
is of considerable interest  for dynamics (see \cite{BH1,GPS1995,KRW},
their references and their citers). A flow equivalence $F: \mapt X\to \mapt X$
induces an automorphism of each of these groups; for example,
the automorphism of
$C(\mapt X, S^1)/\sim $ is defined by the obvious rule
$[f]\mapsto [f\circ F]$.

\begin{corollary}\label{theorem:232}
Suppose $(X_A,\sigma_A)$ is a nontrivial irreducible shift of finite
type. Then the action (by permutations) of
the mapping class group $\mathcal{M}_A$ on
the set of circles of $\mapt X_A$ is faithful. The action of $\mathcal{M}_A$ on
the first \v{C}ech cohomology group of $\mapt X_A$ is  also faithful.
\begin{proof}
This follows from Theorem~\ref{theorem:231}, since a homeomorphism
moving a circle in $\mapt X$ to another circle has nontrivial action on
\v{C}ech cohomology.
\end{proof}
\end{corollary}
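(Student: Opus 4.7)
The plan is to reduce both faithfulness claims to the equivalence $(1)\Leftrightarrow(3)$ of Theorem \ref{theorem:231}. For the faithful action on circles, if $F\in\mathcal F_A$ represents a class in the kernel, then by definition $F(\mathcal C)=\mathcal C$ for every circle $\mathcal C$ in $\mapt X_A$; then the implication $(3)\Rightarrow(1)$ of Theorem \ref{theorem:231} gives that $F$ is isotopic to the identity, so $[F]$ is trivial in $\mathcal M_A$.

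For the \v Cech cohomology action, I would prove the contrapositive by showing that if $F$ moves some circle $\mathcal C$ off itself, then $F^*$ acts nontrivially on $\check H^1(\mapt X_A)$. The idea is to use the pairing between circles and $\check H^1(\mapt X_A)\cong C(X_A,\Z)/(I-\sigma_A)C(X_A,\Z)$ coming from integrating a cocycle around a periodic orbit. Concretely, for a periodic point $p$ of least period $k$ with corresponding circle $\mathcal C(p)$, and for $f\in C(X_A,\Z)$, set
\[
\langle [f],\mathcal C(p)\rangle \;=\; \sum_{i=0}^{k-1} f(\sigma_A^i p)\,.
\]
This is well defined on $C(X_A,\Z)/(I-\sigma_A)C(X_A,\Z)$ (because $(I-\sigma_A)h$ telescopes to zero around an orbit), and it is natural under flow equivalence in the sense that $\langle F^*[f],\mathcal C\rangle = \langle [f],F(\mathcal C)\rangle$ whenever $F$ carries circles to circles.

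Given distinct circles $\mathcal C,\mathcal C'$ in $\mapt X_A$ (coming from disjoint finite periodic orbits in $X_A$), one can choose a clopen set $U\subset X_A$ meeting the orbit carrying $\mathcal C$ but disjoint from the orbit carrying $\mathcal C'$; then $f=\chi_U\in C(X_A,\Z)$ satisfies $\langle[f],\mathcal C\rangle\geq 1$ while $\langle[f],\mathcal C'\rangle=0$. So if $F(\mathcal C)=\mathcal C'\neq\mathcal C$, the cohomology class $[\chi_U]$ witnesses $F^*[\chi_U]\neq[\chi_U]$.

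Combining these, if $F^*$ is trivial on $\check H^1(\mapt X_A)$ then $F$ fixes every circle of $\mapt X_A$ setwise, and Theorem \ref{theorem:231} again gives $[F]=1$ in $\mathcal M_A$. The only nonroutine point is the naturality of the pairing and the existence of separating clopen sets for disjoint finite orbits, both of which are standard; the main conceptual step is recognizing that circles can be detected cohomologically through the Bruschlinsky/degree interpretation of $\check H^1(\mapt X_A)$, so that the Čech faithfulness statement really is strictly stronger than—but provable by the same mechanism as—the faithfulness on circles.
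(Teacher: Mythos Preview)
Your proof is correct and follows the same approach as the paper: both reduce to Theorem~\ref{theorem:231} together with the observation that a flow equivalence moving one circle to a different circle acts nontrivially on \v Cech cohomology. The paper asserts this last fact in a single clause, whereas you spell it out via the degree/Bruschlinsky pairing $\langle [f],\mathcal C(p)\rangle=\sum_{i=0}^{k-1}f(\sigma_A^ip)$ and a separating clopen set---a clean and correct way to make that step explicit.
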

%\begin{remark}
%Theorem~\ref{theorem:231} also implies that the action of
%$\mathcal{M}_A$ on the (ordered) cohomology group
%$C(X_A,\mathbb{Z})/(I-\sigma_A)C(X_A,\mathbb{Z})$ (considered in
%\cite{BH1} and \cite{KRW}) is faithful.
%\end{remark}

An important fact for  analyzing the automorphism group of an
irreducible SFT, and its actions, is that there are finite invariant
sets (points of some period), whose union is dense. The next
result (from \cite{Cthesis2011}) shows in a strong way that we have nothing like that for
the study of $\mathcal M_A$.

\begin{theorem} \label{theorem:234}
Suppose $(X_A,\sigma_A)$ is a nontrivial irreducible shift of finite
type. Then
$\mathcal{M}_A$ acts $n$-transitively on the set of circles in
$\mapt X_A$ for all $n\in\mathbb{N}$.
\end{theorem}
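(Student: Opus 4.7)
The plan is to reduce $n$-transitivity to a ``move one circle, fix many'' statement, and then realize the latter using marker words together with a positive equivalence and an automorphism of a flow-equivalent SFT.

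A short group-theoretic induction shows that $n$-transitivity for all $n$ follows from the single claim: for any pairwise distinct circles $C, C', D_1, \dots, D_k$ of $\mapt{X_A}$, there is $[F] \in \mathcal{M}_A$ with $[F](C) = C'$ and $[F](D_j) = D_j$ for each $j$. Indeed, given distinct $n$-tuples $(C_1, \dots, C_n)$ and $(C_1', \dots, C_n')$, inductively produce $G \in \mathcal{F}_A$ sending $(C_1, \dots, C_{n-1})$ to $(C_1', \dots, C_{n-1}')$; the image $G(C_n)$ is then distinct from each $C_j'$ (since $G$ is a bijection of circles), and applying the claim with $D_j := C_j'$ yields $H$ that fixes $C_1', \dots, C_{n-1}'$ and sends $G(C_n)$ to $C_n'$, so $HG$ realizes the required bijection.

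For the claim, the circles correspond to distinct periodic orbits $P, Q, R_1, \dots, R_k$ of $\sigma_A$. I would first produce a marker word $u$ appearing in $P$ but in none of $Q, R_1, \dots, R_k$, and symmetrically a marker word $v$ for $Q$: taking $u = w_P^N$ for large $N$, where $w_P$ is a minimal periodic block of $P$, works, because otherwise a compactness argument would force a point of $P$ to lie in another orbit, contradicting distinctness. Then construct $F$ as a composition: (i) positive equivalences passing to a flow-equivalent SFT $\sigma_B$ in which the images $P^B, Q^B$ have a common period, with the elementary splittings chosen inside the $u$- and $v$-marker regions so that the $R_i$-orbits are left symbolically unchanged; (ii) a marker-swap sliding block code automorphism of $\sigma_B$ that swaps $P^B$ and $Q^B$ while acting as the identity on any point whose neighborhood contains neither marker, hence fixing every $R_i^B$ orbit; and (iii) the inverse positive equivalence. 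The class of this composition in $\mathcal{M}_A$ sends the circle of $P$ to that of $Q$ and fixes each circle of $R_i$.

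The main obstacle is the combined requirement in steps (i) and (ii): one must simultaneously arrange the period-padding splits to live in the marker regions (so as not to perturb the $R_i^B$) and exhibit the marker-swap automorphism that effects the swap of $P^B, Q^B$ while leaving the other orbits fixed. Both follow from standard marker methodology for automorphisms of SFTs, but the bookkeeping is delicate; the flow code machinery of Appendix \ref{sec:flowcodes} is designed precisely to package this kind of local modification of a flow, and in particular allows one to build $F$ directly as a flow code that rewrites $u$-segments into $v$-segments and vice versa, bypassing the need to juggle positive equivalences and automorphisms separately.
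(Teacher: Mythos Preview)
Your inductive reduction to ``move one circle, fix finitely many'' is correct, and the plan in steps (i)--(iii) can be made to work; but it is more circuitous than the paper's argument and the flow code shortcut you sketch at the end has a real gap.

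The paper handles all $2n$ circles at once rather than inducting. After passing to a high block presentation, the $2n$ periodic orbits become vertex-disjoint simple loops; a sequence of basic positive equivalences then collapses each loop to a single self-loop, so the relevant orbits are all \emph{fixed points} of a flow-equivalent SFT $\sigma_B$. A few more positive equivalences make $\sigma_B$ mixing with points of every least period, and then \cite{BoF1} supplies directly an inert automorphism $u$ of $\sigma_B$ with $u(y_i)=y'_i$ for all $i$. Conjugating $\widehat u$ back gives the required $F$. This avoids your marker bookkeeping entirely, and also avoids the need to argue that the step-(i) splittings can be confined to marker regions; the price is a single clean citation of \cite{BoF1}. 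Your step (ii), incidentally, is not really a ``marker-swap sliding block code'' in the naive sense---to get an automorphism of an SFT with prescribed action on a finite set of orbits you will end up invoking essentially the same result from \cite{BoF1}, so the induction buys you nothing over the paper's direct argument.

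The final suggestion---build $F$ as a flow code that literally swaps $u$-segments for $v$-segments---does not work as stated. For that map to be a well-defined homeomorphism you need the occurrences of $u$ and $v$ to have identical left and right contexts (as in the $\alpha V_k\alpha$ setup of Theorem~\ref{notresiduallyfinitetheorem}, where every $V_k$ is flanked by the same symbol $\alpha$). Arbitrary marker words $u=w_P^N$ and $v=w_Q^N$ in distinct periodic orbits of an irreducible SFT will generally have incompatible follower/predecessor sets, so the putative swap map neither lands in $X_A$ nor is invertible. You would first have to pass to a presentation in which $u$ and $v$ share a common anchor---which is exactly the work your step (i) was doing, so the flow code ``bypass'' is illusory.
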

\begin{proof}
Let $\{\mathcal{C}_1,\ldots,\mathcal{C}_n\}$ and
$\{\mathcal{C}'_1,\ldots,\mathcal{C}'_n\}$ be sets of $n$ distinct
circles. For each $i\in\{1,2,\ldots,n\}$, let $x_i,x'_i$ be
representatives of the circles $\mathcal{C}_i,\mathcal{C}'_i$
respectively. We take a $k$-block presentation of $(X_A,\sigma_A)$
where $k$ is large enough that any point of period $p$ comes from
a path of length $p$ without repeated vertices except initial and
terminal vertices and no two of these loops share a vertex. If one
of these loops, say $L$, has length greater than $1$, then we
apply a basic positive equivalence which corresponds to cutting
out an edge $e$ on the loop $L$ and replacing it with edges
labeled $[ef]$, for the edge $f$ following $e$. The new loop will
have length $p-1$ in the new graph. Continuing in the same
fashion, we get a loop of length $1$. Since no two of these loops
share a vertex, we can apply the same idea to another loop without
changing the former loop. Continuing in this way, we get a graph
with loops $y_1,\ldots,y_n,y'_1,\ldots,y'_n$ of length $1$, each of
which comes from the loop containing $x_1,\ldots,x_n,x'_1,\ldots,x'_n$.
If necessary we continue to apply basic positive equivalences
until we get a graph $\mathcal{G}_B$  with at least one point of least period $n$,
for every positive integer $n$. Let $(X_B,\sigma_B)$ be the SFT
induced by the graph $\mathcal{G}_B$. $(X_B,\sigma_B)$ is flow
equivalent to $(X_A,\sigma_A)$. Since $y_1,\ldots,y_n,y'_1,\ldots,y'_n$
are fixed points in $(X_B,\sigma_B)$ and $\sigma_B$ is mixing with
points of all least periods, there is an inert automorphism
$u\in\text{Aut}(\sigma_B)$ such that $u(y_i)=y'_i$ for all
$i=1, 2, \ldots, n$ \cite{BoF1}. Extend $u$ to a flow equivalence
$\widehat{u}: \mapt X_B\to \mapt X_B$ by $\widehat{u}([x,t])=[u(x),t]$. Let
$G:\mapt X_A\to \mapt X_B$ be a flow equivalence arising from the
construction. Then $F=G^{-1}\widehat{u}G$ is the required flow
equivalence, i.e., $F(\mathcal{C}_i)=\mathcal{C}'_i$ for all
$i=1, 2, \ldots, n$.
\end{proof}
In contrast to Theorem \ref{theorem:234},
 note that if a flow equivalence $F$ maps a cross section
$C$ onto a cross section $D$, then the return maps to these cross sections
are topologically conjugate. The action of $\mathcal F_A$
on cross sections is very far from transitive.

The center of the automorphism group of an irreducible shift of
finite type is simply the powers of the shift \cite{Ryan}. The next
result
(from \cite{Cthesis2011}) is
the analogue for the mapping class group.

\begin{theorem}\label{theorem:235}
Suppose $(X_A,\sigma_A)$ is a nontrivial irreducible shift of finite
type.
Then the center of $\mathcal{M}_A$ is trivial.
\begin{proof}
Let $\mathcal{C}$ be a circle in $\mapt X_A$ and $F$ be an element in
the center of $\mathcal{M}_A$. Suppose that $F(\mathcal{C})\neq
\mathcal{C}$. Note that $F(\mathcal{C})$ is also a circle. Then
there is a flow equivalence $G$ such that
$G(\mathcal{C})=\mathcal{C}$ and $G(F(\mathcal{C}))\neq
F(\mathcal{C})$ by Theorem~\ref{theorem:234}. Thus
$FG(\mathcal{C})=F(\mathcal{C})\neq GF(\mathcal{C})$ which is a
contradiction. Hence $F(\mathcal{C})=\mathcal{C}$ for all circles
$\mathcal{C}$ in $\mapt X_A$. Therefore, $F$ is isotopic to the identity
by Theorem~\ref{theorem:231}.
\end{proof}
\end{theorem}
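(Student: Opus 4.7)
The plan is to reduce centrality to the faithful action on circles, then exploit the strong transitivity already proved in Theorem~\ref{theorem:234}. Suppose, for contradiction, that $F\in\mathcal{M}_A$ is central but $[F]$ is not trivial. By Theorem~\ref{theorem:231}, the non-triviality of $[F]$ means there exists at least one circle $\mathcal C$ in $\mapt X_A$ with $F(\mathcal C)\neq \mathcal C$. Set $\mathcal C'=F(\mathcal C)$; since $F$ is a flow equivalence, $\mathcal C'$ is again a circle, and by hypothesis $\mathcal C'\neq\mathcal C$.

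Next I would use $2$-transitivity of $\mathcal M_A$ on circles to produce a $G$ that detects non-commutativity. By Theorem~\ref{theorem:234} (applied with $n=2$), I can find $G\in\mathcal{M}_A$ sending the pair $(\mathcal{C},\mathcal{C}')$ to the pair $(\mathcal{C},\mathcal{C}'')$ for any third circle $\mathcal{C}''\neq \mathcal{C}'$ with $\mathcal{C}''\neq\mathcal{C}$; such a $\mathcal{C}''$ exists because $\mapt X_A$ contains infinitely many circles (using irreducibility and nontriviality). Then $G(\mathcal C)=\mathcal C$ while $G(\mathcal C')=\mathcal C''\neq \mathcal C'$.

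Finally, I would compare the two compositions applied to $\mathcal C$:
\begin{equation*}
(FG)(\mathcal C)=F(\mathcal C)=\mathcal C',\qquad (GF)(\mathcal C)=G(\mathcal C')=\mathcal C''\neq \mathcal C'.
\end{equation*}
Hence $[F][G]\neq [G][F]$, contradicting the assumption that $[F]$ is central. Therefore $F(\mathcal C)=\mathcal C$ for every circle $\mathcal C$, and by the equivalence $(3)\Rightarrow(1)$ in Theorem~\ref{theorem:231} we conclude that $[F]$ is trivial in $\mathcal{M}_A$.

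There is no genuinely hard step here, which is really the point: all the heavy lifting has been done in Theorems~\ref{theorem:231} and~\ref{theorem:234}. The only thing to be mildly careful about is ensuring that a third circle $\mathcal C''$ distinct from both $\mathcal C$ and $\mathcal C'$ exists, so that the $2$-transitive move actually separates $GF$ from $FG$; this is immediate from nontriviality and irreducibility of $\sigma_A$, which guarantee infinitely many periodic orbits and hence infinitely many circles in $\mapt X_A$.
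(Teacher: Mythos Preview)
Your argument is correct and is essentially the same as the paper's: both use $2$-transitivity (Theorem~\ref{theorem:234}) to produce $G$ with $G(\mathcal C)=\mathcal C$ but $G(F(\mathcal C))\neq F(\mathcal C)$, derive $FG\neq GF$ on $\mathcal C$, and then invoke Theorem~\ref{theorem:231}. Your version is slightly more explicit about choosing the third circle $\mathcal C''$, but the structure is identical.
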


\begin{remark}\label{theoremtoquestion}Suppose $\sigma_A$ and $\sigma_B$ are nontrivial irreducible
  SFTs. It is not known whether
  $\text{Aut}(\sigma_A)$ must embed as a subgroup of $\text{Aut}(\sigma_B)$.
  Kim and Roush proved the embedding does exist when $\sigma_A$ is a
  full shift \cite{KR1990}. With mapping class groups in place of automorphism
  groups, we do not have even the analogue of the Kim-Roush
  result. (Adapting the automorphism group argument of Kim and Roush to mapping
  class groups, using flow codes in place of block codes, is
  problematic.)
\end{remark}

  \begin{question}
Do all mapping class groups of nontrivial
irreducible SFTs embed into each other?
\end{question}

Recall
$\mathcal M^o_A$ denotes  the kernel of the Bowen-Franks
representation $\mathcal M_A \to \cok (I-A)$.
We are led to several questions about
$\mathcal M^o_A$.
\begin{question} Let $\sigma_A$ be a  nontrivial irreducible SFT.
  Is $\mathcal M^o_A$  simple? perfect (i.e. equal to its commutator
  subgroup)? generated by involutions?
\end{question}
There has recently been a burst of results constraining the
  structure of an
  automorphism group of a subshift (usually assumed to be minimal)
of low complexity (e.g. polynomial complexity,
  or even just zero entropy). (See
\cite{ ST2015, CQY, CK2016, DDMP2016,CFKP}
and their references.) Here degree $d$ polynomial complexity of
  a subshift means that
the number of allowed words of length $n$ is bounded by a polynomial
$p(n)$ of degree $d$. The classes of zero entropy shifts, degree $d$
polynomial complexity shifts and minimal shifts are each invariant
under flow equivalence.

\begin{question} Are there constraints on the structure of the mapping
class group of a low complexity (minimal) shift, analogous to
constraints on the automorphism group?
\end{question}

Some quite interesting full groups have been proved to be
finitely generated or even finitely presented
\cite{JM,Matui}.

\begin{question}
 Let $\sigma_A$ be a  nontrivial irreducible SFT.
Is $\mathcal M^o_A$
finitely generated?
\end{question}

Because $\rho_A $ is surjective, and the group of automorphisms of a
finitely generated abelian group is itself finitely generated, we have
that
 $\mathcal M_A$  is finitely generated if
 $\mathcal M^o_A$  is finitely generated.
(In contrast, the group of automorphisms of the dimension module
of $X_A$ is often but not always a finitely generated group
\cite{BLR}.)

%%%%%%%%%%%%%%%%%%%%%%%%%%%%%%%%%%%%%%%%%%%%%%%%%%%%%%%%%%%%%%%%%%%%%%%%%%%
\section{Outer and nonspatial automorphisms} \label{sec:autos}

In this section we show that $\mathcal M_A$
has an outer automorphism.
Extending work from \cite{mbwk:amsess}, 
we give examples of
 $\text{Aut}(\sigma_A)$
with outer and nonspatial automorphisms,  
and derive consequences of  spatiality of isomorphisms from
sufficiently rich subgroups of 
$\text{Aut}(\sigma_A)$.

It is natural to suspect that nontrivial irreducible SFTs
$\sigma_A$, $\sigma_B$ which are not flow
equivalent cannot have isomorphic mapping class
groups. (Although, given works of Riordam, Matsumoto and Matui
(see \cite{Rordam,MM}), one could speculate that
isomorphism of their Bowen-Franks groups alone might imply
$\mathcal M_A \cong \mathcal M_B$.)
Question \ref{feinduced} gives
one standard approach to this possibility.

\begin{definition} An isomorphism $\phi : G_1\to G_2$ between
  groups of homeomorphisms is {\it spatial} if it is induced
  by some homeomorphism $H$ (i.e., $\phi (g) = H^{-1}gH$).
\end{definition}

\begin{question} \label{feinduced} Suppose $(X_A,\sigma_A)$ and $(X_B, \sigma_B)$  are
  nontrivial irreducible shifts of finite type.
  Is every isomorphism
$\psi\colon \mathcal M^o_A \to \mathcal M^o_B$ spatial?
  Is every isomorphism
$\psi\colon \mathcal M_A \to \mathcal M_B$ spatial?
\end{question}

\begin{remark} \label{spatialremark}
  A standard  method for proving spatiality (and more)
  for a group $G$ of homeomorphisms  of the Cantor set
  (e.g. within full groups \cite{GPSfull,GM,JM})
  appeals to $G$ having a sufficiently rich supply of
  maps which are the identity on large open sets.
  It's problematic (perhaps impossible) to find some
  analogue of this
  approach for $\mathcal M_A$ or $\text{Aut}(\sigma_A)$.
  The only  element of $\text{Aut}(\sigma_A)$
  which is the identity on a nonempty open set is the
  identity; and if $F$ in $\mathcal F_A$ is the identity
  on an open neighborhood of a cross section, then $[F]$ is the identity
  in $\mathcal M_A$.
\end{remark}

Recall, a system $(X,T)$ is
{\it indecomposable} if $X$ is not the union of
  two disjoint nonempty $T$-invariant subsystems. Equivalently,
  $\mapt X$ is connected. 

\begin{definition}
For an indecomposable system $(X,T)$, the  extended
mapping class group  of
$T$, $\mathcal M^{\text{ext}} (T)$,
is the
group $\mathcal H(T)$ of all homeomorphisms $\mapt X\to \mapt X$,
modulo the subgroup of those isotopic to the identity
in $\mathcal H(T)$.
\end{definition}

With $\mapt X$ connected, an element
of $\mathcal H(T)$ either respects orientation on all orbits
or reverses orientation on all orbits.
The mapping torus of $(X,T^{-1})$ can be identified
with the mapping torus of $(X,T)$,
but with its unit speed suspension flow moving
in the opposite direction. With this identification,
$\mathcal M(T)=\mathcal M(T^{-1})$.
An orientation reversing homeomorphism $V$ of $\mapt X$
is a flow equivalence from $T$ to $T^{-1}$.
Such a $V$ always exists when $\sigma_A$ is a nontrivial 
irreducible SFT, because $(\sigma_A)^{-1}$ is conjugate to the
SFT presented by the transpose of $A$,  and the complete
invariants agree on $A$ and its transpose.
Clearly $\mathcal M(T) $ is an index 2
normal subgroup of $\mathcal M^{\text{ext}}(T)$. 

\begin{theorem} \label{outertheorem}
  Let $\sigma_A$ be a nontrivial irreducible SFT. 
  The action  of the extended
  mapping class group $\mathcal M^{\text{ext}} (\sigma_A)$
  by permutations on the circles in $\mapt X_A$ is faithful.
  Consequently, the center of $\mathcal M^{\text{ext}}(\sigma_A)$
  is trivial and the outer automorphism group of
  $\mathcal M (\sigma_A)$ has cardinality at least two.
\end{theorem}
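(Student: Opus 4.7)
The three conclusions are linked, and the substantive content is in the first; I plan to prove faithfulness of the circle-action and then deduce the center and outer-automorphism statements from it using the $n$-transitivity of $\mathcal M_A$ on circles (Theorem \ref{theorem:234}).

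For faithfulness, suppose $[H] \in \mathcal M^{\text{ext}}(\sigma_A)$ fixes every circle setwise. If $H$ preserves the flow direction, then $H \in \mathcal F(\sigma_A)$ and Theorem \ref{theorem:231} gives $[H] = 1$. The delicate case is when $H$ reverses flow direction. A short commutator computation shows that this forces $[H]$ to be central in $\mathcal M^{\text{ext}}_A$: for any $K \in \mathcal H(\sigma_A)$, the commutator $HKH^{-1}K^{-1}$ preserves the flow direction (even parity of reversals) and acts on each circle $\mathcal C$ as $H(K(H^{-1}(K^{-1}(\mathcal C)))) = H(\mathcal C) = \mathcal C$, so Theorem \ref{theorem:231} makes it isotopic to the identity in $\mathcal F(\sigma_A)$, placing $[H]$ in $Z(\mathcal M^{\text{ext}}_A)$.

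I then plan to rule out such an orientation-reversing central element via the cohomological sign obstruction. The induced action $H^*$ on $\check H^1(\mapt X_A) \cong C(X_A,\mathbb Z)/(I-\sigma_A)C(X_A,\mathbb Z)$ must be $-\mathrm{id}$: classes in $\check H^1$ are determined by their ergodic sums over periodic orbits via an integer Livsic principle (a real transfer function modulo $\mathbb Z$ is continuous and $\sigma_A$-invariant, hence constant by irreducibility), and because $H$ reverses orientation on each fixed circle those sums all flip sign. On the other hand, $[H]$ being central forces $H^*$ to commute with the full $\mathcal F_A$-action on $\check H^1$, which is rich enough---through the Bowen--Franks representation and the positivity coming from return-time cocycles---to leave $-\mathrm{id}$ no room; in particular the class $[\mathbf 1]$ is not $2$-torsion, since $2 = h - h\circ\sigma_A$ for $h \in C(X_A,\mathbb Z)$ is ruled out by telescoping along any periodic orbit. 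The main obstacle will be making this positivity conflict completely rigorous; a combinatorial alternative uses the flow-code machinery of Appendix \ref{sec:flowcodes} together with Theorem \ref{theorem:234} to exhibit an explicit $[F] \in \mathcal M_A$ for which $[HFH^{-1}] \neq [F]$, directly contradicting centrality.

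Given faithfulness, the remaining assertions follow quickly. For triviality of $Z(\mathcal M^{\text{ext}}_A)$, the argument of Theorem \ref{theorem:235} transfers verbatim: if $[G] \in Z(\mathcal M^{\text{ext}}_A)$ moved some circle $\mathcal C$, then by $n$-transitivity one could pick $[F] \in \mathcal M_A$ with $F(\mathcal C) = \mathcal C$ and $F(G(\mathcal C)) \neq G(\mathcal C)$, contradicting $[FG] = [GF]$; hence $G$ fixes every circle and faithfulness gives $[G] = 1$. For the outer-automorphism assertion, pick any orientation-reversing $V \in \mathcal H(\sigma_A)$, which exists because $\sigma_A$ is flow equivalent to $\sigma_{A^T}$. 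Conjugation by $V$ defines an automorphism $\psi_V$ of the normal subgroup $\mathcal M_A$. If $\psi_V = \psi_W$ were inner with $W \in \mathcal F(\sigma_A)$, then $[W^{-1}V]$ would centralize $\mathcal M_A$; its permutation of circles would commute with the $2$-transitive $\mathcal M_A$-action and hence be trivial (the centralizer of a $2$-transitive permutation group in the full symmetric group is trivial), and faithfulness would give $[W^{-1}V] = 1$, i.e. $[V] = [W] \in \mathcal M_A$, contradicting that $V$ reverses flow. Therefore $\psi_V$ is not inner, and $|\mathrm{Out}(\mathcal M_A)| \geq 2$.
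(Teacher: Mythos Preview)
Your reductions for the center and outer-automorphism claims are fine and match the paper's approach. The gap is in the faithfulness argument, specifically in ruling out an orientation-reversing $H$ that fixes every circle.

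Your cohomological line does not close. You correctly observe that such an $H$ would induce $H^* = -\mathrm{id}$ on $\check H^1(\mapt X_A)$, and that $[H]$ would be central. But $-\mathrm{id}$ is a perfectly good automorphism of $\check H^1$ that commutes with every other automorphism, so centrality of $H^*$ yields no contradiction. Your appeal to ``positivity coming from return-time cocycles'' is not a valid obstruction: $H$ is not a flow equivalence, and there is no reason for $H^*$ to respect any order structure on cohomology. The observation that $[\mathbf 1]$ is not $2$-torsion is true but irrelevant; $H^*([\mathbf 1]) = -[\mathbf 1]$ is consistent with everything you have derived. You acknowledge this yourself (``the main obstacle will be making this positivity conflict completely rigorous''), and the combinatorial alternative you gesture at is not carried out.

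The paper's argument is direct and avoids cohomology entirely. It constructs an explicit circle that no orientation-reversing homeomorphism can fix: take a periodic orbit with defining block of the form $(O)V^N(OO)V^N(OOO)V^N$, where $O = WV^n$ and $n, N$ are chosen large enough that a flow code sees the three ``islands'' $O$, $OO$, $OOO$ separated by long $V$-periodic stretches. An orientation-preserving flow code sends this to a circle whose island-size pattern reads $\ldots 123123\ldots$, while an orientation-reversing one produces the pattern $\ldots 132132\ldots$; these are distinct periodic orbits. The asymmetry of the $1{-}2{-}3$ arrangement is exactly the missing combinatorial idea. This is what your ``combinatorial alternative'' would have to supply, and it is the heart of the proof.
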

\begin{proof}
  Suppose $F$ and $G$ are homeomorphisms of $\mapt X_A$,
  with the same action by permutations on circles.
  If $FG^{-1}$ is orientation preserving, then $FG^{-1}$ is
  isotopic to the identity,
  by Corollary \ref{theorem:232}, so $[F]=[G]$ in
  $\mathcal M^{\text{ext}}_A$. 
  Now suppose $F$ is orientation preserving and $G$ is orientation
  reversing. For definiteness, after passing to isotopic maps,
  we suppose they are given by flow codes. Let  $W,V$ be distinct words
  such that $(WV^nW)$ is an $X_A$-word, for all $n$.
  With $O=WV^n$, consider a circle $C$ which is the suspension of
  a periodic orbit  for $\sigma_A$ with defining block
  $(O)V^N(OO)V^N(OOO)V^N$. For $n$ sufficiently large, and then $N$
  sufficiently larger than $n$,
  there will be large integers $M,P$ and words
  $\overline O, \overline V, \widetilde O, \widetilde V$ 
  with $\overline V^M  $ much longer than $\overline O\overline O\overline O$
  and $\widetilde V^P$ much longer than
  $\widetilde O\widetilde O\widetilde O$, such that 
  the circles
  $FC$ and $GC$ will be suspensions of $\sigma_A$-orbits
  with defining blocks of the following forms:
  \begin{align*}
\overline B\ =\ &    (\overline O)
        \overline V^M
    (\overline O\, \overline O)
    \overline V^M(\overline O\, \overline O\, \overline O)\overline V^M
    \ , \quad \text{for } FC \ , \\
\widetilde B \ =\ &       (\widetilde O)
    \widetilde V^P
    (\widetilde O\widetilde O\widetilde O)\widetilde V^P
    (\widetilde O\widetilde O)
    \widetilde V^P
    \ , \quad \quad \, \text{for } GC \ .
  \end{align*}
  Now the blocks interrupting  $\overline V$-periodicity in $(\overline B)^{\infty}$
  will have   $ \dots 123123123 \dots$
as  a periodic relative size pattern, 
  while the blocks 
  interrupting of $\widetilde V$-periodicity in $(\widetilde B)^{\infty}$
  will have 
  $\dots 321321321 \dots $
  as a periodic  
  relative size  pattern. Thus $F\mathcal C \neq G \mathcal C$.   
This finishes the proof of faithfulness.
  The proof of triviality of the center of  $\mathcal M^{\text{ext}}_A$
  follows the  proof of Theorem \ref{theorem:235}. Then,
  conjugation by an orientation reversing homeomorphism of
  $\mapt X_A$ defines an automorphism of $\mathcal M_A$
  which is not an inner automorphism of $\mathcal M_A$.
\end{proof}

We now turn to the automorphism group of $\sigma_A$. The
next definition formalizes a property used in 
\cite{mbwk:amsess}, as recalled below. 
%The next
%definition generalizes a property used in \cite{mbwk:amsess}. 
%to give an explicit example of $\sigma_A$
%with a nonspatial automorphism of its automorphism group. 

\begin{definition}\label{defn:SIC}
  An SFT $\sigma_A$ is SIC 
if 
$\text{Aut}(\sigma_A)$ is the internal direct sum of
its center $\left< \sigma_A\right>$ and a complementary
normal subgroup containing the inert automorphism subgroup
$\text{Aut}_0(\sigma_A)$.
\end{definition}

We will show next that there are many examples of SIC SFTs.
We say $\lambda$ is rootless in $R$ if
$\lambda =u^k$ with $k\in \N$ and $u \in R$
implies $k=1, \lambda = u$.  For example,
a positive integer is rootless in $\Q$ if it is
rootless in $\Z$. A fundamental unit of a quadratic
number ring
$R$ is rootless in $R$. If $\lambda $ is an algebraic
number with infinite order, then it has a $k$th root 
 in $\Q(\lambda )$ for only finitely many $k$.  

\begin{proposition}
  Suppose $\sigma_A$ is a nontrivial irreducible SFT, and
  $\lambda_A$, the Perron eigenvalue of $A$, is rootless
  in $\Q (\lambda_A)$. Then $\sigma_A$ is SIC.
\end{proposition}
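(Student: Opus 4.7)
The plan is to construct a direct-product splitting $\text{Aut}(\sigma_A)=\langle \sigma_A\rangle\times K$ via the classical dimension representation $\pi_A\colon \text{Aut}(\sigma_A)\to \text{Aut}^{+}(G_A)$ of Krieger and Wagoner, whose kernel is $\text{Aut}_0(\sigma_A)$ and which sends $\sigma_A$ to the action of $A$ on the dimension module $G_A$. The complementary subgroup $K$ will be the preimage of a complement of $\langle \lambda_A\rangle$ in the image of $\pi_A$ after projection to the Perron component.

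First, isolate the Perron part. Since $A$ is irreducible, $\lambda_A$ is a simple eigenvalue and $G_A\otimes\Q$ has a canonical one-dimensional $\Q(\lambda_A)$-direct summand on which $A$ acts as multiplication by $\lambda_A$. Composing $\pi_A$ with projection onto this summand yields a homomorphism $\tilde\pi_A\colon \text{Aut}(\sigma_A)\to \Q(\lambda_A)^{*}$ with $\tilde\pi_A(\sigma_A)=\lambda_A$; positivity of $\pi_A(\phi)$ on the ordered dimension group forces $\tilde\pi_A$ to land in the positive reals under the real embedding fixing $\lambda_A$. In particular the image $H:=\tilde\pi_A(\text{Aut}(\sigma_A))$ is torsion-free; and since the multiplicative group of any number field modulo its roots of unity is free abelian (unique factorization of fractional ideals, combined with Dirichlet for units), $H$ is free abelian.

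The substantive step is to use rootlessness to show $\langle\lambda_A\rangle$ is pure in $H$. Suppose $h\in H$ and $h^k=\lambda_A^m$; set $d=\gcd(m,k)$, $m=dm'$, $k=dk'$ with $\gcd(m',k')=1$. Then $(h^{k'}\lambda_A^{-m'})^{d}=1$, and torsion-freeness of $H$ gives $h^{k'}=\lambda_A^{m'}$. Choose $a,b\in\Z$ with $am'+bk'=1$ by B\'ezout; the element $z:=h^{a}\lambda_A^{b}\in\Q(\lambda_A)$ then satisfies $z^{k'}=\lambda_A$, so rootlessness of $\lambda_A$ in $\Q(\lambda_A)$ forces $k'=1$. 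Hence $k\mid m$ and $h=\lambda_A^{m/k}\in\langle \lambda_A\rangle$.

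A pure cyclic subgroup of a free abelian group is a direct summand, so $H=\langle\lambda_A\rangle\oplus H'$ for some $H'$. Setting $K:=\tilde\pi_A^{-1}(H')$, the image $H$ is abelian so $K$ is normal; $K\supseteq \ker\tilde\pi_A\supseteq \text{Aut}_0(\sigma_A)$; $K\cap\langle\sigma_A\rangle=\{1\}$ because $\lambda_A^{n}\in H'$ forces $n=0$; and $K\cdot\langle\sigma_A\rangle=\text{Aut}(\sigma_A)$ by the decomposition of $H$. The main obstacle I anticipate is the Step~2 bookkeeping --- verifying that positivity and integrality really force the Perron projection of the image into the free-abelian (torsion-free) part of $\Q(\lambda_A)^{*}$, so that cyclic pure subgroups actually split off. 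The B\'ezout manipulation in the purity argument is the single substantive use of the rootlessness hypothesis; everything else is organization.
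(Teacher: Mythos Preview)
Your proof is correct and follows essentially the same route as the paper's: both use the Perron-eigenvalue component $\mu$ (your $\tilde\pi_A$) of the dimension representation, show that rootlessness of $\lambda_A$ lets $\langle\lambda_A\rangle$ split off as a direct summand of the image $H$, and take $K$ to be the preimage of the complement. Your write-up is in fact more careful than the paper's in two places: the paper simply asserts that $H$ is finitely generated free abelian, whereas you justify free-abelianness via the number-field structure of $\Q(\lambda_A)^*$ (and never need finite generation); and the paper writes only ``by the rootless assumption, $H$ is the internal direct sum of $\langle\lambda_A\rangle$ and some complementary group,'' whereas your B\'ezout computation makes the purity step explicit.
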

\begin{proof}
  One part of the dimension representation
   $\rho_A$ is the
  homomorphism $\mu$ which sends an automorphism $U$
  to the positive number by which $\rho_A (U)$ multiplies
  a Perron eigenvector of $A$.
  The image  group under multiplication,
  $\mu (\text{Aut}(\sigma_A)) := H$,
    is finitely generated free abelian, with 
  $\mu (\sigma_A)=\lambda_A$, the Perron eigenvalue of $A$.
  By the rootless assumption,
$H$ is
the internal direct sum of $ \left< \lambda_A \right>$
and some complementary group $N$.
The epimorphism $\text{Aut} (\sigma_A) \to H/N$ splits
(by $[\lambda_A^n] \mapsto \sigma_A^n$). 
Let $K = \mu^{-1} (N)$.
Because the complementary subgroup
$\left< \sigma_A\right>$ is the center,   
the group 
$\text{Aut} (\sigma_A)$ 
is the internal direct sum $K \oplus
  \left< \sigma_A \right>$.
\end{proof}

%\begin{definition}\label{defn:SIC}
%An SFT $\sigma_A$ is
%SIC
%if  $\text{Aut}(\sigma_A)$ is the internal direct sum of
%  the inert automorphism subgroup  $\text{Aut}_0(\sigma_A)$ and
%  the center $\left< \sigma_A\right>$.
%\end{definition}

\begin{proposition}\label{SICproposition}
  Suppose $(X_A, \sigma_A)$ is a nontrivial SIC irreducible SFT,
  $\text{Aut}(\sigma_A) \cong K\oplus \left< \sigma_A\right>
  \cong K \oplus \Z$.
  Let $\phi $ be the automorphism of $\text{Aut}(\sigma_A) $ 
  which
  in the latter notation is 
  $(k,n) \mapsto (k,-n)$. 
  Then  $\phi$   is not spatial.
\end{proposition}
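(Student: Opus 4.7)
The plan is to argue by contradiction. Suppose $\phi$ is spatial, induced by a homeomorphism $H \colon X_A \to X_A$ with $\phi(g) = H^{-1}gH$ for all $g \in \text{Aut}(\sigma_A)$. Translating the two defining properties of $\phi$, the equation $\phi(\sigma_A) = \sigma_A^{-1}$ forces $H\sigma_A H^{-1} = \sigma_A^{-1}$, so $H$ is a \emph{flip conjugacy}, and the equation $\phi(k) = k$ for $k \in K$ forces $HkH^{-1} = k$, so $H$ centralizes $K$ inside $\text{Homeo}(X_A)$. The first step is thus to reduce the problem to showing: \emph{no flip conjugacy of $\sigma_A$ centralizes the subgroup $K$}.

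Next I would extract additional structural information on $H$. Because $\phi$ is an involution, the automorphism $g \mapsto H^{-2} g H^2$ is the identity on $\text{Aut}(\sigma_A)$, so $H^2$ lies in the centralizer of $\text{Aut}(\sigma_A)$; in particular $H^2$ commutes with $\sigma_A$, so $H^2 \in \text{Aut}(\sigma_A)$. The same involutive property gives $\phi(H^2) = H^2$, placing $H^2$ in the $\phi$-fixed set, which is exactly $K$. On the other hand, since $H^2$ acts trivially by conjugation on the whole of $\text{Aut}(\sigma_A)$, it lies in the center $\langle \sigma_A \rangle$. Combining, $H^2 \in K \cap \langle \sigma_A \rangle = \{e\}$, so $H$ is an \emph{involutive} flip conjugacy centralizing $K$. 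Now fix any reference flip conjugacy $H_0$, which exists because $\sigma_A^{-1}$ is conjugate to the SFT presented by $A^T$ (and $A$, $A^T$ share the complete flow-equivalence invariants, hence are flow equivalent, with a natural reversal giving $H_0$). Write $H = g H_0$ with $g \in \text{Aut}(\sigma_A)$; using the direct-sum decomposition $g = k_0 \sigma_A^{n_0}$ and the centrality of $\sigma_A$, the centralizing condition becomes
\[
\psi_0(k) \;=\; k_0^{-1}\,k\,k_0 \qquad \text{for all } k \in K,
\]
where $\psi_0(x) = H_0 x H_0^{-1}$ is the automorphism of $\text{Aut}(\sigma_A)$ induced by $H_0$. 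Thus $\psi_0|_K$ must coincide with conjugation by an element of $K$.

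The main obstacle, and the heart of the proof, is producing an explicit obstruction to this last conclusion. The strategy, extending the technique of \cite{mbwk:amsess}, is to pick a specific element $k^* \in K$ and show $\psi_0(k^*)$ cannot be $K$-conjugate to $k^*$. A convenient $k^*$ is an inert automorphism supported on a collection of periodic orbits whose $\sigma_A$-direction structure is asymmetric, so that the reversal built into $\psi_0$ visibly changes the combinatorial action of $k^*$ on the set of periodic orbits in a way that no element of $K$ (which preserves orientation on every orbit) can undo. The rootless/SIC hypothesis enters crucially by making the decomposition $g = k_0 \sigma_A^{n_0}$ unambiguous and by ensuring $K$ is a concretely described complement rather than an arbitrary one, so that the comparison of permutation actions is meaningful. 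I expect the most delicate step to be verifying that the candidate $k^*$ actually exists and is detected by the chosen invariant; once an asymmetric inert $k^*$ is in hand, the contradiction with $\psi_0|_K$ being inner-by-$K$ follows by comparing its action on a finite invariant set of circles with that of any $K$-conjugate.
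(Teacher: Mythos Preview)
Your opening reduction is exactly right: assuming $\phi$ is spatial, the inducing homeomorphism $H$ must be a flip conjugacy of $\sigma_A$ that commutes with every element of $K$. After that, however, your argument goes off course. The claim that a reference flip conjugacy $H_0$ exists ``because $A$ and $A^T$ share the complete flow-equivalence invariants'' confuses flow equivalence with topological conjugacy; there are irreducible SFTs not conjugate to their inverses (indeed, that easy case is exactly what \cite{mbwk:amsess} handled). More seriously, your reduction to ``$\psi_0|_K$ is not inner-by-$K$'' is circular: you are already assuming a flip conjugacy $H$ exists, and taking $H_0=H$ gives $k_0=e$, so $\psi_0|_K=\mathrm{id}$, which is trivially inner. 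Thus your reduced statement is false in the very situation you want to contradict, and the vague plan in the last paragraph to find an ``asymmetric'' $k^*$ cannot rescue it.

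The paper's argument is far shorter and uses a single idea you missed: the SIC hypothesis puts the entire inert subgroup $\text{Aut}_0(\sigma_A)$ inside $K$, and inert automorphisms act richly enough on periodic points to \emph{mimic the shift}. Concretely, for $\sigma_A$ mixing and any periodic point $x$ of sufficiently large period, there is an inert $U$ with $Ux=\sigma_A x$ (this is extracted from \cite{BoF1,BLR}; the paper's Proposition~\ref{richsubgroups} spells out a construction). Since $H$ commutes with every such $U$, one deduces that $H$ commutes with $\sigma_A$ on the dense set of large-period periodic points, hence everywhere, contradicting $H^{-1}\sigma_A H=\sigma_A^{-1}$ (as $\sigma_A$ has infinite order). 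The irreducible period-$p$ case is handled by restricting to a cyclic component and using the mixing return map $\sigma_A^p|_B$. No reference flip conjugacy, no involution computation, and no search for a distinguished $k^*$ is needed.
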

\begin{proof}
 Suppose $\phi$ is induced by a homeomorphism $H$. It follows that 
  $H$ is  a conjugacy from $\sigma_A$ to its inverse, 
  with $HU=UH$ for every $U$ in $K$. First suppose $\sigma_A$
  is mixing. Then for any periodic point $x$ of sufficiently large period,
  there is an inert automorphism $U$ such that $Ux=\sigma_Ax$.
  (This follows e.g. from any of the three papers
  \cite{BoF1, BLR, NasuSln}; for a precise argument, 
  see the   proof of Proposition   \ref{richsubgroups} below.)
  Thus $H$ commutes with $\sigma_A$ on
  a dense set, and hence everywhere. This contradicts
  $H^{-1}\sigma_AH = \sigma_A^{-1}$.

Now suppose  $\sigma_A$ is irreducible with period $p>1$.
  Then $\sigma_A$ 
    induces a cyclic permutation of $p$ disjoint
    clopen sets $B,  \sigma_A(B), \dots ,
    \sigma_A^{p-1}(B)$.
    After postcomposing $H$ with a power of $\sigma_A$, we may
    assume $H(B)=B$. 
    The return map $\sigma_A^p|_B$ is 
     a mixing
     SFT, and every inert automorphism of
     $\sigma_A^p|_B$ extends to an inert automorphism
    of $\sigma_A$. Thus  $H|_B$ commutes
    with $\sigma^p|_B$. Because $\sigma_A^p|_B$ has infinite order,
    this contradicts
      $H^{-1}\sigma_A^pH = \sigma_A^{-p}$.
\end{proof}

In \cite[Proposition 4.2]{mbwk:amsess},
the automorphism $\phi$ above was used to produce  
an example of a 
 nonspatial automorphism of $\text{Aut} (\sigma_A)$, 
for a mixing SFT $\sigma_A$
such that  $\text{Aut} (\sigma_A) \cong \text{Aut}_0 (\sigma_A)
\oplus \left<\sigma_A\right>$ and $\sigma_A$ is not conjugate
to its inverse. The proof in \cite{mbwk:amsess}
was simply to note that spatiality of $\phi$ would require
$\phi$ to be a (nonexistent) 
conjugacy from $\sigma_A$ to its 
inverse.

%For example, a  full shift on $n$ symbols ($n>1$) is SIC if and
%only if $n$ is prime.
%An SFT  $\sigma_A$ is
%also  SIC   if
%  the characteristic polynomial of $A$ is the minimal polynomial
%  of its large eigenvalue, $\lambda_A$,
%  with $\lambda_A$ a fundamental unit in a quadratic  number field
%  (or in a cubic number field, in the case $\lambda_A$ has nonreal
%  algebraic conjugates).
%

\begin{remark} For a nontrivial SIC mixing
  SFT $\sigma_A$ which is topologically conjugate to its inverse
  (such as a rootless full shift), the outer automorphism
  group of $\text{Aut}(\sigma_A)$ has cardinality at least four.
  (There is the nonspatial
  involution, and another element of order two in
  $\text{Out}(\sigma_A)$ arising from conjugating by a topological
  conjugacy of $\sigma_A$ and its inverse, essentially by the argument
  proving Theorem \ref{outertheorem}.)
  The action on periodic points
  of conjugacies of $\sigma_A$ and $\sigma_A^{-1}$
  is studied in \cite{mbwk:amsess,  KLP}.  
  \end{remark}

Although there can be nonspatial automorphisms of $\text{Aut}(\sigma_A)$,
we do not know whether this is possible for various distinguished subgroups
(such as the commutator). This motivates the following propositions.

\begin{proposition} \label{subspatial}
  Suppose $\sigma_A$ is a nontrivial irreducible SFT, $(X,T)$ is
  a zero dimensional system and 
  $H$ is a subgroup of $\text{Aut}(\sigma_A)$
  satisfying the following:
  \begin{enumerate}
  \item
    $\{ x\in X_A: \exists U \in H, Ux=\sigma_Ax\}$ is dense
    in $X_A$.
  \item
    The centralizer of $H$ in $\text{Aut}(\sigma_A)$
    equals $\left< \sigma_A \right>$. 
  \end{enumerate}
  Suppose $\phi: H \to \phi(H)$ is a spatial
  isomorphism to a subgroup
  of $\text{Aut}(T)$; i.e.,  $\phi: U \mapsto pU p^{-1}$,
  with $p: X_A \to X$ a homeomorphism. 
  Then $p^{-1}Tp $ equals $ \sigma_A$ or $ \sigma_A^{-1}$, and
  $p$ induces a spatial automorphism $\text{Aut}(\sigma_A) \to
  \text{Aut}(T)$. 
\end{proposition}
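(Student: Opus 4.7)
The plan is to set $S:=p^{-1}Tp$, a self-homeomorphism of $X_A$ conjugate to $T$, and to show $S\in\{\sigma_A,\sigma_A^{-1}\}$; the spatial isomorphism then follows at once, because conjugation by $p$ carries $\text{Aut}(\sigma_A)=\text{Aut}(S)$ onto $\text{Aut}(T)$.

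First, for each $U\in H$ the map $\phi(U)=pUp^{-1}$ lies in $\text{Aut}(T)$ and hence commutes with $T$; conjugating by $p^{-1}$ yields $SU=US$. Thus $S$ centralizes $H$ in $\text{Homeo}(X_A)$. By hypothesis (2), once we know $S$ lies in $\text{Aut}(\sigma_A)$, it will lie in $\langle\sigma_A\rangle$.

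The main step is to use (1) to promote centralization of $H$ into commutation with $\sigma_A$. For each $x$ in the dense set $D=\{x:\exists U\in H,\ Ux=\sigma_Ax\}$, pick $U_x\in H$ with $U_xx=\sigma_Ax$. Since $U_x$ and $\sigma_A$ are block codes, they agree on a clopen neighborhood $V_x$ of $x$, so $D$ is open. The identity $SU_x=U_xS$ then gives $S\sigma_Ay=U_x(Sy)$ for all $y\in V_x$, equivalently $\sigma':=S\sigma_AS^{-1}$ agrees with the block code $U_x$ on the clopen set $S(V_x)$. Thus the continuous homeomorphism $\sigma'$ is locally an element of $\text{Aut}(\sigma_A)$ on each piece of the open cover $\{S(V_x):x\in D\}$ of the dense open set $S(D)$. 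I would use continuity of $\sigma'$ together with consistency on overlaps (where any two $U_x, U_{x'}$ must coincide, both being equal to $\sigma'$ there) to conclude that $\sigma'\in\text{Aut}(\sigma_A)$ globally. This patching --- promoting scattered pointwise local shift-agreements into a single shift-equivariant global map --- is the principal technical obstacle.

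Once $\sigma'\in\text{Aut}(\sigma_A)$, the map $\sigma'$ inherits centralization of $H$ from $S$, so hypothesis (2) gives $\sigma'=\sigma_A^m$ for some $m\in\Z$; equivalently $S\sigma_A=\sigma_A^mS$, exhibiting $S$ as a conjugacy of $\sigma_A$ with $\sigma_A^m$. Entropy invariance forces $m=\pm1$ for a nontrivial irreducible SFT. When $m=1$, $S\in\text{Aut}(\sigma_A)$ centralizes $H$, so (2) yields $S\in\langle\sigma_A\rangle$; a further appeal to (1) --- the density of $\sigma_A$-agreement (not $\sigma_A^n$-agreement) for elements of $H$, transported through $p$ --- rules out higher powers and gives $S=\sigma_A$. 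When $m=-1$, the same machinery applied to the modified map $\sigma_AS$ (which now commutes with $\sigma_A$) forces $S=\sigma_A^{-1}$. In either case, conjugation by $p$ takes $\text{Aut}(\sigma_A)=\text{Aut}(S)$ bijectively onto $\text{Aut}(T)$, yielding the asserted spatial automorphism.
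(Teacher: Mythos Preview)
Your argument fails at the claim ``Since $U_x$ and $\sigma_A$ are block codes, they agree on a clopen neighborhood $V_x$ of $x$.'' This is false. The equalizer $\{y\in X_A : U_x(y) = \sigma_A(y)\}$ is closed and, because $U_x$ commutes with $\sigma_A$, it is $\sigma_A$-invariant; unless $U_x=\sigma_A$ itself, it is therefore a \emph{proper} closed $\sigma_A$-invariant subset of the irreducible $X_A$, hence nowhere dense. (Concretely: if $U_x$ has coding radius $N$ and $y$ agrees with $x$ on $[-M,M]$, then $U_x(y)$ and $\sigma_A(y)$ are only forced to agree on coordinates in roughly $[-M+N,M-N]$, not at all coordinates.) So $D$ is not open, and the local-to-global patching you propose for placing $\sigma'=S\sigma_A S^{-1}$ inside $\text{Aut}(\sigma_A)$ has no foundation.

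By contrast, the paper's proof is three lines and bypasses $\sigma'$ entirely: setting $\psi=p^{-1}Tp$, condition (1) is invoked to give $\psi\sigma_A=\sigma_A\psi$ on the dense set (hence everywhere), so that $\psi\in\text{Aut}(\sigma_A)$ directly; then (2) forces $\psi\in\langle\sigma_A\rangle$, and an equal-entropy observation pins down $\psi=\sigma_A^{\pm1}$. Your concluding step also has a gap: the assertion that hypothesis (1) ``rules out higher powers'' once you know $S\in\langle\sigma_A\rangle$ is not justified---condition (1) concerns where elements of $H$ mimic $\sigma_A$, and every power $\sigma_A^k$ centralizes $H$ and respects those relations equally well, so nothing in (1) distinguishes $S=\sigma_A$ from $S=\sigma_A^k$.
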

\begin{proof}
  Let $\psi = p^{-1}Tp $. By (1),
  $\psi \sigma_A = \sigma_A \psi$ on a dense set, hence
  everywhere. By (2), $\psi \in \left< \sigma_A \right>$.
  Because $\psi$ and $\sigma_A$ have equal entropy,
  $\psi$ equals $\sigma_A$ or $\sigma_A^{-1}$.
\end{proof}

\begin{proposition} \label{richsubgroups}
  Suppose $\sigma_A$ is a nontrivial mixing SFT, and 
  $H$ is a subgroup of
  $\text{Aut}(\sigma_A)$ containing the subgroup
  \[
  K:= \left<\{ aba^{-1}b^{-1}:
  \{a,b\} \subset \text{Aut}_0 (\sigma_A), a^2=b^2=Id\}\right> .
  \]
  Then $H$ satisfies the conditions (1) and (2) of
  Proposition \ref{subspatial}.
\end{proposition}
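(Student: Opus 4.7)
Since $K \subseteq H$, it suffices to verify conditions (1) and (2) for $K$ itself; in particular, the centralizer of $H$ in $\text{Aut}(\sigma_A)$ is contained in the centralizer of $K$.

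For (1), fix a cylinder $[w] \subset X_A$. By mixing there is a periodic point $x \in [w]$ of large prime period $n$. The key structural remark is that every $U \in \text{Aut}(\sigma_A)$ is $\sigma_A$-equivariant, so for $n$ prime an inert involution acts on each period-$n$ orbit either by fixing it pointwise or by swapping it with another period-$n$ orbit via a $\sigma_A$-equivariant matching (a single discrete shift parameter in $\Z/n\Z$). Using BLR-style \cite{BLR} marker constructions, I would choose three auxiliary period-$n$ orbits $O_1, O_2, O_3$ disjoint from $O := \{\sigma_A^i x\}$ and build two inert involutions $a, b \in \text{Aut}_0(\sigma_A)$ with prescribed orbit-permutation and matching data: $a$ swaps $O \leftrightarrow O_1$ and $O_2 \leftrightarrow O_3$ and fixes all other period-$n$ orbits pointwise, while $b$ swaps $O \leftrightarrow O_2$ and $O_1 \leftrightarrow O_3$ and likewise fixes the rest. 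A direct orbit-level computation shows $[a,b]$ preserves $O$ and acts there as $\sigma_A^j|_O$ for a $j \in \Z/n\Z$ determined by the four matching shifts; tuning these shifts to give $j = 1$ yields $[a,b] \in K$ with $[a,b](x) = \sigma_A(x)$. As cylinders are dense, this gives (1).

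For (2), suppose $V \in \text{Aut}(\sigma_A)$ commutes with every element of $K$. Since $\sigma_A$ is central in $\text{Aut}(\sigma_A)$, $V$ permutes periodic orbits preserving least periods, and on each period-$n$ orbit $V$ restricts to $\sigma_A^{k(O)}|_O$ for some $k(O)$. For $U = [a,b]$ as above, the identity $VU = UV$ combined with the explicit orbit-permutation of $U$ (which moves only the four orbits $O, O_1, O_2, O_3$) forces $V(O) = O$. Varying the auxiliary triple and the four shift parameters, one obtains further commutators in $K$ each acting as $\sigma_A$ on $O$ but distinguishable on a chosen second orbit $O'$ of the same period; the collected identities $VU' = U'V$ force $k(O) = k(O')$ across all pairs of period-$n$ orbits. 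Hence $V|_O = \sigma_A^k|_O$ for a single integer $k$ on every orbit of sufficiently large prime period, and density of periodic points in $X_A$ yields $V = \sigma_A^k$ globally.

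The main obstacle is the BLR-style marker bookkeeping: independently prescribing the four $\sigma_A$-equivariant matchings in each involution so that the commutator realizes $\sigma_A$ on $O$, and arranging enough freedom in the choice of auxiliary triples to force both $V(O) = O$ and the cross-orbit constancy of $k(O)$. These constructions are standard for mixing SFTs but intricate; mixing supplies the abundance of period-$n$ orbits, for each sufficiently large prime $n$, needed to carry them out.
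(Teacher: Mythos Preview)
Your plan is in the right spirit—manufacture elements of $K$ as commutators of inert involutions with prescribed action on periodic orbits—but the execution has real gaps, and the paper's route is much simpler.

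\textbf{The gaps in (2).} You assert that ``on each period-$n$ orbit $V$ restricts to $\sigma_A^{k(O)}|_O$'' \emph{before} establishing $V(O)=O$; this is false for a general $V\in\text{Aut}(\sigma_A)$. Next, your $U=[a,b]$ (with the Klein-four orbit pattern you chose) fixes each of $O,O_1,O_2,O_3$ \emph{setwise}—its orbit-permutation is trivial—so $VU=UV$ does not by itself force $V(O)=O$; it only forces $V$ to send $O$ to some orbit on which $U$ acts by the same $\sigma$-power. This is repairable by varying the auxiliary triple, but you have not carried that out, and the subsequent ``cross-orbit constancy of $k(O)$'' step is left as a promise.

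\textbf{How the paper does it.} The paper uses much simpler involutions: for $x,y$ in distinct period-$n$ orbits, an inert involution $U(x,y)$ swapping the \emph{points} $x\leftrightarrow y$ and fixing all other period-$\le n$ points (from \cite[Lemma~2.3(a)]{BoF1}). The commutator $k(x,y,z)=[U(x,y),U(y,z)]$ is then the $3$-cycle $(x\,z\,y)$ on points, and $k(\sigma_A x,y,z)\,k(x,y,z)^2$ sends $x\mapsto\sigma_A x$; this gives (1) with no shift-parameter tuning. For (2), since the $k(x,y,z)$ realize every $3$-cycle on the set $\mathcal P_n$ of period-$n$ orbits, $K$ induces the full alternating group on $\mathcal P_n$ whenever $|\mathcal P_n|\ge 4$. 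No nontrivial permutation centralizes the alternating group on $\ge 4$ letters, so any $V$ centralizing $K$ must fix every such orbit. The paper then finishes by quoting \cite[Theorem~2.5]{BK2}: an automorphism of an irreducible SFT fixing all but finitely many finite orbits is a power of the shift. This single citation replaces both your $V(O)=O$ argument and your $k(O)$-constancy argument.

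So: your Klein-four construction for (1) can be made to work but is needlessly intricate; your argument for (2) is incomplete as written; and in both parts the paper's choice of single-swap involutions producing honest $3$-cycles makes the whole proof short and citation-friendly.
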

\begin{proof}
  Let $\mathcal P_n$ be be the set of  $\sigma_A$ orbits
  of cardinality $n$.
    Pick $N$ such that
    $n\geq N$ implies  $|\mathcal P_n| \geq 4$.
    Now suppose $n\geq N$.
    Given $x,y$ in distinct orbits in $\mathcal P_n$, 
    we can choose an inert involution $U(x,y)$ which exchanges
    $x$ and $y$ and is the identity on points of period at most
    $n$ which are not in the orbits of $x$ and $y$.
    (This follows from \cite[Lemma 2.3(a)]{BoF1}, and the freedom
    to ``vary the embedding'' stated in its proof.)
    Suppose $x,y,z$ are in distinct orbits in $\mathcal P_n$. 
    Let $a=U(x,y)$,  $b=U(y,z)$, $k(x,y,z) = aba^{-1}b^{-1} \in K$.
    Then $k(x,y,z)$ cyclically permutes $x,y,z$ and is the identity
    map on points of period at most $n$ outside the orbits of $a$, $b$
    and $c$. The map $k=k(\sigma_A ( x),y,z)k(x,y,z)k(x,y,z)$ satisfies
    $k(x)=\sigma_A(x)$; this shows $H$ satisfies (1). The maps
    $k(x,y,z)$ induce all 3-cycle permutations of 
    $\mathcal P_n$, and therefore $K$ induces all even permutations of
    $\mathcal P_n$. Because $|\mathcal P_n|\geq 4$, no nontrivial
    permutation of $\mathcal P_n$ commutes with every even permutation. 
     Thus  an automorphism in the centralizer of $K$ maps
  $\mathcal O$ to $\mathcal O$, 
  for all but finitely many of the
  finite orbits $\mathcal O$, 
 and thus must be a
  power of the shift. 
\end{proof} 

For mixing SFTs $\sigma_C$, let 
$G_C$ denote $\text{Aut}(\sigma_C)$ or  $\text{Aut}_0(\sigma_C)$,
and let $H_C$ denote some associated subgroup (such as the commutator,
or the subgroup generated by involutions) such that 
(i) $H_C$  satisfies the containment
assumption of Proposition \ref{richsubgroups}, 
and (ii) any group isomorphism $G_A \to G_B$ must restrict to an isomorphism
$H_A \to H_B$. Showing 
any isomorphism $H_A \to H_B$ must be spatial would show that
the group isomorphism class of $H_A$ (and also the group
isomorphism class of $G_A$) classifies $\sigma_A$ up to flip
conjugacy. 

%%%%%%%%%%%%%%%%%%%%%%%%%%%%%%%%%%%%%%%%%%%%%%%%%%%%%%%%%%%%%%%%%%%%%%%%%%%
\section{Invariant cross sections and automorphisms} \label{sec:sections}

In this section we show how some elements of the mapping class group
are
induced by automorphisms of flow equivalent systems, and show
for a nontrivial irreducible SFT $(X_A, \sigma_A)$
that these are (by far) not all of
$\mathcal M_A$. For
 $(X,T)$, let $\widetilde X$ denote the cross section
$\{ [x,0]\in \mapt X: x\in X\}$.

%For $u\in\text{Aut}(\sigma_A)$, define $\widehat{u}:\mapt X_A\to \mapt X_A$ by
%$\widehat{u}([x,t])=[u(x),t]$. Clearly,
%$\widehat{u}\in\mathcal{M}_A$. Define
%$\phi:\text{Aut}(\sigma_A)\to \mathcal{M}_A$ by
%$\phi(u)=\widehat{u}$.

\begin{definition} If $u\in \text{Aut}(T)$, then $\widehat u
: \mapt X \to \mapt X$ is the flow equivalence  (actually a
self-conjugacy of the suspension flow) defined by
$\widehat u: [x,t)\mapsto [u(x),t)$, $0\leq t < 1$.
\end{definition}

\begin{definition}
Let $F:\mapt X\to \mapt X$ be a flow equivalence. A cross section $C$ of
$\mapt X$ is  an \textit{invariant cross section} for $F$ if
$F(C)=C$.
%When $F$ is used to denote the element $[F]$ of
%$\mathcal{M}_A$, we say $F$ has an invariant cross section if any
%element of $[F]$ (any flow equivalence isotopic to $F$) has an
%invariant cross section.
\end{definition}

For example, $\widetilde X$ is an invariant cross section
for $\widehat u$, for every $u$ in
$\text{Aut}(T)$.

\begin{definition}
An equivalence $F: \mapt X \to \mapt X$ is induced by an automorphism
$v$ of
the return map $\rho_C$ to an invariant cross section $C$  if
$F(y) = v(y) $ for all $y$ in $C$.
\end{definition}
If
flow equivalences $F,F'$ from $\mapt X$ to $\mapt X$
have the same invariant cross section
$C$, and $F(y)=F'(y)$ for all $y$ in $C$, then $F$ and $F'$ are
isotopic.

Now we can spell out a straightforward but useful correspondence.

\begin{theorem}\label{theorem:245}
  Let $T: X\to X$ be a homeomorphism of a compact zero
  dimensional metric space.

\begin{enumerate}
\item
Suppose $F:\mapt X\to \mapt X$ is a flow equivalence with an invariant
cross section $C$. Then $F$ is a flow equivalence
induced by an automorphism of the first return map $\rho_c$ under the
suspension flow.
\item
Conversely, suppose $(X',T')$ is another system, and
$\widetilde{X'} $ denotes the cross section
$\{ [x,0]: x\in X'\}$ of $\mapt X'$.
Suppose $F: \mapt X' \to \mapt X$ is a flow equivalence. Then
for every $u$ in $\text{Aut}(T')$,
$F\widehat u F^{-1}$ is a flow equivalence $\mapt X \to \mapt X$;
$F(\widetilde{X'})$ is an invariant cross section
for  $F\widehat u F^{-1}$; and
$(X',u)$ is  topologically conjugate to
the return map $F(\widetilde{X'})\to
F(\widetilde{X'})$ under
the suspension flow on $\mapt X$.
The map $u\mapsto F\widehat u F^{-1}$ induces a homomorphism
$\phi_F: \text{Aut}(T')  \to \mathcal M(T)$.
\end{enumerate}
\end{theorem}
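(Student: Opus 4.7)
The plan is to verify both parts directly from the definitions, with part (1) doing the real work and part (2) being largely a formal unpacking.

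For part (1), I would set $v := F|_C$ and show that $v$ is an automorphism of $\rho_C$. Since $F$ is a homeomorphism with $F(C)=C$, certainly $v$ is a homeomorphism of $C$. The key point is the compatibility with $\rho_C$. For any $y\in C$, the flow orbit $\mathcal O(y)$ meets $C$ in a discrete (closed, isolated) subset, totally ordered by the flow direction; $\rho_C(y)$ is the immediate forward successor of $y$ in this set. Because $F$ is a flow equivalence, $F$ maps $\mathcal O(y)$ homeomorphically onto $\mathcal O(F(y))$ in a direction-preserving way, and because $F(C)=C$, it sends $\mathcal O(y)\cap C$ bijectively onto $\mathcal O(F(y))\cap C$ while preserving the cyclic order. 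Hence $F(\rho_C(y))$ must be the immediate forward successor of $F(y)$ in $C\cap \mathcal O(F(y))$, i.e.\ $F(\rho_C(y))=\rho_C(F(y))$. Thus $v\rho_C=\rho_C v$ on $C$, so $v\in \mathrm{Aut}(\rho_C)$, and $F$ is by definition induced by $v$.

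For part (2), the three bulleted assertions are each one-line verifications. First, $\widehat u$ is a flow self-equivalence of $\mapt X'$ (it permutes suspension orbits and preserves the flow direction since it just translates the $X'$-coordinate while keeping $t$ fixed), and compositions of flow equivalences are flow equivalences, so $F\widehat u F^{-1}$ is a flow equivalence $\mapt X\to\mapt X$. Second, $\widehat u(\widetilde{X'})=\widetilde{X'}$ because $\widehat u[x,0]=[u(x),0]$; applying $F$ on both sides yields $(F\widehat u F^{-1})(F(\widetilde{X'}))=F(\widetilde{X'})$, so $F(\widetilde{X'})$ is an invariant cross section. Third, the natural identification $\iota\colon X'\to\widetilde{X'}$, $x\mapsto[x,0]$, conjugates $(X',T')$ to $(\widetilde{X'},\rho_{\widetilde{X'}})$ and sends $u$ to $\widehat u|_{\widetilde{X'}}$; post-composing $\iota$ with the homeomorphism $F|_{\widetilde{X'}}\colon\widetilde{X'}\to F(\widetilde{X'})$ then exhibits $(X',u)$ as conjugate to $(F(\widetilde{X'}),\,F\widehat u F^{-1}|_{F(\widetilde{X'})})$, and by part (1) the latter is the automorphism of $\rho_{F(\widetilde{X'})}$ that induces $F\widehat u F^{-1}$.

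Finally, to see that $u\mapsto F\widehat u F^{-1}$ descends to a homomorphism $\phi_F\colon \mathrm{Aut}(T')\to\mathcal M(T)$, I would just note $\widehat{uv}=\widehat u\widehat v$ (immediate from $[\,\cdot\,,t]\mapsto[\,\cdot\,,t]$), so $F\widehat{uv}F^{-1}=(F\widehat uF^{-1})(F\widehat v F^{-1})$ as an identity in $\mathcal F(T)$, and then compose with the quotient $\mathcal F(T)\to\mathcal M(T)$.

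There is essentially no obstacle — the only place requiring a little care is the order-preservation argument in part (1) (making sure that the discrete set of $C$-crossings on each orbit is transported by $F$ in an order-preserving way), but this is immediate from $F$ being a homeomorphism onto, mapping orbits to orbits and respecting flow direction. Everything else is formal.
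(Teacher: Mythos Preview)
Your proposal is correct and follows the same approach as the paper: for (1) you restrict $F$ to $C$ and verify it lies in $\mathrm{Aut}(\rho_C)$, and for (2) you factor $u\mapsto F\widehat u F^{-1}$ through $\mathcal F(T')\to\mathcal F(T)\to\mathcal M(T)$. The paper's proof is extremely terse (it omits the order-preservation argument for $v\rho_C=\rho_C v$ entirely), so your version simply supplies the details the paper leaves implicit.
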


\begin{proof}
For (1), let $u=F|_{C}$. Then $u:C\to C$ is a homeomorphism.
%. We will show that $uT_c=T_cu$.
%Let $y\in C$. Then $uT_c(y)=u\alpha_{f_c(y)}(y)$
%and $T_cu(y)=\alpha_{f_c(u(y))}(u(y))$. Observe that $y$ and
%$\alpha_{f_c(y)}(y)$ are closest points of $C$ in the same flow
%orbit. Also, $u(y)$ and $\alpha_{f_c(u(y))}(u(y))$ are closest
%points of $C$ in the same flow orbit. Since $F$ is orientation
%preserving and $u: y\mapsto u(y), u: \alpha_{f_c(y)}(y)\mapsto
%\alpha_{f_c(u(y))}(u(y))$. This shows that $uT_c=T_cu$ as
%required. Therefore,
Therefore $u\in\text{Aut}(\rho_c)$.

For (2), the homomorphism $\phi_F$ is a composition of group
homomorphisms
\[
\text{Aut}(T') \to \mathcal F(T')\to
\mathcal F(T)\to \mathcal M(T)
\]
where $\mathcal F$ denotes the group of self flow equivalences.
The second homomorphism is bijective and
the third is  surjective.
%
%. Then we
%have an embedding
%$\text{Aut}(\sigma_B)/\left<\sigma_B\right>\to\mathcal{M}_B\xrightarrow{\widehat{F}}\mathcal{M}_A$.
\end{proof}

\begin{theorem}\label{theorem:241}
For a nontrivial irreducible SFT $(X_A, \sigma_A)$,
let $\phi$ be the map $\text{Aut}(\sigma_A) \to \mathcal M_A$ defined by
$u\mapsto \widehat u$. Then
$\text{Ker}(\phi)=\left<\sigma_A\right>$, the cyclic group generated
by $\sigma_A$.
\end{theorem}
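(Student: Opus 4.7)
The plan is to prove both containments of sets.

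For $\langle \sigma_A\rangle \subseteq \mathrm{Ker}(\phi)$: the key observation is that $\widehat{\sigma_A}$ is nothing other than the time-one map of the suspension flow. Indeed, for $x \in X_A$ and $0 \le t < 1$, the equivalence relation defining $\mapt X_A$ gives $\widehat{\sigma_A}([x,t]) = [\sigma_A(x), t] = [x, t+1] = \alpha_1([x,t])$. The one-parameter family $h_s := \alpha_s$ for $s \in [0,1]$ is a continuous path in $\mathcal F(\sigma_A)$ from the identity to $\widehat{\sigma_A}$, so $[\widehat{\sigma_A}]$ is trivial in $\mathcal M_A$. Therefore $\widehat{\sigma_A^n} = (\widehat{\sigma_A})^n$ is isotopic to the identity for every $n \in \Z$.

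For the reverse containment, suppose $u \in \text{Aut}(\sigma_A)$ satisfies $[\widehat u] = [\text{Id}]$ in $\mathcal M_A$. By Theorem \ref{theorem:231} (the implication $(1) \Rightarrow (2)$), the flow equivalence $\widehat u$ fixes every suspension flow orbit setwise. The flow orbit through $[x,0]$ is $\{[\sigma_A^k(x), t] : k\in \Z,\, t \in [0,1)\}$; since $\widehat u([x,0]) = [u(x),0]$ must lie in this set, we conclude $u(x) \in \{\sigma_A^k(x) : k \in \Z\}$ for every $x \in X_A$. In other words, $u$ carries every $\sigma_A$-orbit to itself.

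Now invoke the result of Boyle--Krieger (\cite[Theorem 2.5]{BK2}), already cited in the proof of Theorem \ref{theorem:231}: an automorphism of an irreducible SFT which fixes all (in fact, even all but finitely many) orbits must be a power of the shift. This forces $u = \sigma_A^n$ for some $n \in \Z$, so $\mathrm{Ker}(\phi) \subseteq \langle \sigma_A\rangle$. Combining the two inclusions gives equality, completing the proof. Neither step presents a serious obstacle: direction one is immediate from identifying $\widehat{\sigma_A}$ with $\alpha_1$, and direction two is an application of Theorem \ref{theorem:231} together with the cited automorphism result for irreducible SFTs.
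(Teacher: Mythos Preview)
Your proof is correct and follows essentially the same approach as the paper: the easy inclusion is dispatched (you give the explicit isotopy $\alpha_s$, while the paper merely says ``clearly''), and the reverse inclusion is obtained by combining Theorem~\ref{theorem:231} with the Boyle--Krieger result \cite[Theorem 2.5]{BK2}. The only cosmetic difference is that you invoke $(1)\Rightarrow(2)$ of Theorem~\ref{theorem:231} to conclude that $u$ fixes every $\sigma_A$-orbit, whereas the paper uses the circle version $(1)\Rightarrow(3)$ to conclude that $u$ fixes every finite $\sigma_A$-orbit; either suffices for the application of \cite{BK2}.
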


\begin{proof}
Clearly
$\text{Ker}(\phi) \supset\left<\sigma_A\right>$. Now suppose
$u\in \text{Ker}(\phi)$. By Theorem \ref{theorem:231}, for every
circle $\mathcal C$ in $\mapt X_A$,
$\widehat{u}(\mathcal C) = \mathcal C$.
It follows that the automorphism $u$ maps each finite $\sigma_A$ orbit
to itself. Because $(X_A, \sigma_A)$ is an irreducible SFT,
it follows from  \cite[Theorem 2.5] {BK2},
that $u$ is a power of the shift.
\end{proof}

%Let $u,v\in\text{Aut}(\sigma_A)$ and $[x,t]\in \mapt X_A$. Then
%\begin{align*}
%\widehat{uv}([x,t]) &= [uv(x),t] \\
%&= \widehat{u}([v(x),t]) \\
%&= \widehat{u}\widehat{v}([x,t]).
%\end{align*}
%Thus $\widehat{uv}=\widehat{u}\widehat{v}$. This means that $\phi$ is a homomorphism.
%
%b) Let $u\in\text{Ker}(\phi)$. Then $\widehat{u}(\mathcal{O})=\mathcal{O}$ for all flow orbits $\mathcal{O}$ in $\mapt X_A$ by Theorem~\ref{theorem:231}. Thus $[u(x),0]=\widehat{u}([x,0])=[x,h(x)]$ for some continuous function $h:X_A\to\mathbb{R}$. This shows that $h(x)\in\mathbb{Z}$ for all $x\in X_A$, and $[u(x),0]=[\sigma_A^{h(x)}(x),0]$. Let $x$ be a point with a dense orbit in $X_A$ under the shift, and set $M=h(x)$. Since $u(x)=\sigma_A^M(x)$, for $n\in\mathbb{Z}$ we have $u(\sigma_A^n(x))=\sigma_A^nu(x)=\sigma_A^n\sigma_A^M(x)=\sigma_A^M(\sigma_A^n(x))$. Thus $u=\sigma_A^M$ on the shift orbit of $x$, and by continuity $u=\sigma_A^M$ everywhere. If $u=\sigma_A^n$ for some $n\in\mathbb{Z}$ then $\widehat{u}$ is isotopic to the identity on $\mapt X_A$. Define isotopy by going to lift and $\widetilde{F_t}(x,s)=(x,s+nt),0\leq t\leq 1$. Thus $\text{Ker}(\phi)=<\sigma_A>$.
%\end{proof}

\begin{theorem} \label{theorem:242}
Suppose $(X_A, \sigma_A)$ is a nontrivial irreducible SFT.
 Then for every irreducible SFT $\sigma_B$ flow equivalent to
$\sigma_A$,
$\mathcal M_A$ contains a copy of $\text{Aut}(\sigma_B )/
\left<\sigma_B\right>$.
Every flow equivalence $\mapt X_A \to \mapt X_A$ with an invariant cross section
arises from an element
of some  such $\text{Aut}(\sigma_B )$, as described
in Theorem  \ref{theorem:245}.
\end{theorem}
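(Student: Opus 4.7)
The argument breaks into two parts, each leveraging the correspondence of Theorem~\ref{theorem:245}.

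For the first assertion, fix an irreducible SFT $\sigma_B$ flow equivalent to $\sigma_A$ together with a flow equivalence $G \colon \mapt X_B \to \mapt X_A$. Theorem~\ref{theorem:245}(2) supplies a homomorphism $\phi_G \colon \text{Aut}(\sigma_B) \to \mathcal M_A$ sending $u$ to $[G\,\widehat u\,G^{-1}]$. Conjugation by $G$ descends to a group isomorphism $\psi_G \colon \mathcal M_B \to \mathcal M_A$, and $\phi_G$ factors as $u \mapsto [\widehat u] \stackrel{\psi_G}{\mapsto} [G \widehat u G^{-1}]$. Hence $u \in \ker \phi_G$ iff $[\widehat u]$ is trivial in $\mathcal M_B$, and Theorem~\ref{theorem:241} applied to $\sigma_B$ forces $u \in \left<\sigma_B\right>$. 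Passing to the quotient yields the desired injection $\text{Aut}(\sigma_B)/\left<\sigma_B\right> \hookrightarrow \mathcal M_A$.

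For the second assertion, let $F \colon \mapt X_A \to \mapt X_A$ be a flow equivalence with an invariant cross section $C$. The plan is to exhibit an SFT $\sigma_B$ flow equivalent to $\sigma_A$ together with $v \in \text{Aut}(\sigma_B)$ and a flow equivalence $G \colon \mapt X_B \to \mapt X_A$ such that $[F] = \phi_G(v)$. Being a Schwartzman cross section in the one-dimensional space $\mapt X_A$ whose local transversals are Cantor sets, $C$ is automatically zero-dimensional, and the return time $f_C$ is continuous, bounded, and bounded away from $0$. Combining the $\sigma_A$-cylinder structure on $\widetilde{X_A}$ with a flow-box decomposition adapted to $C$ yields a finite Markov coding of $C$ whose transitions record the $\sigma_A$-blocks traversed between consecutive returns; this produces a topological conjugacy $p \colon (X_B,\sigma_B) \to (C,\rho_C)$ for some nonnegative integer matrix $B$, with $\sigma_B$ nontrivial irreducible since these properties are flow-equivalence invariants.

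Suspending $p$ produces a flow equivalence $G \colon \mapt X_B \to \mapt X_A$ with $G(\widetilde{X_B}) = C$. By Theorem~\ref{theorem:245}(1) the map $F|_C$ is an automorphism of $\rho_C$, so $v := p^{-1}(F|_C)\,p$ lies in $\text{Aut}(\sigma_B)$. Unwinding definitions shows that $F$ and $G\widehat v G^{-1}$ share the invariant cross section $C$ and agree pointwise on it, hence are isotopic (by the uniqueness remark immediately preceding Theorem~\ref{theorem:245}), giving $[F] = \phi_G(v)$. The main obstacle is the Markov-coding step realizing $(C,\rho_C)$ as an SFT; a cleaner alternative is to apply Theorem~\ref{pstheorem} to the identity flow equivalence on $\mapt X_A$ with the two cross sections $\widetilde{X_A}$ and $C$, which extracts a discrete sub-cross section $D \subset C$ whose return map is manifestly an SFT flow equivalent to $\sigma_A$, together with an isotopy argument to replace $C$ by an appropriate $F$-invariant refinement.
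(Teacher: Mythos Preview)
Your argument is correct and follows the paper's approach: both parts are deduced from Theorem~\ref{theorem:245} and Theorem~\ref{theorem:241}, together with the fact that the return map $(C,\rho_C)$ to an invariant cross section is itself a nontrivial irreducible SFT.

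The only place you work harder than necessary is in justifying that $(C,\rho_C)$ is an SFT. The paper simply invokes the standard fact that any homeomorphism flow equivalent to a nontrivial irreducible SFT is itself a nontrivial irreducible SFT. Your direct Markov-coding sketch would establish this, but your Parry--Sullivan alternative has a loose end: Theorem~\ref{pstheorem} gives you $D'\subset C$ with $(D',\rho_{D'})$ conjugate to a return map on a clopen set in $X_A$ (hence an SFT), but there is no reason $D'$ should be $F$-invariant, and you do not need it to be. The point is rather that $(C,\rho_C)$ is a finite tower over $(D',\rho_{D'})$, and a finite tower over an SFT is an SFT; once you know $(C,\rho_C)\cong(X_B,\sigma_B)$, the original $F$-invariant $C$ does the job with no refinement required.
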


\begin{proof} This follows from
Theorem \ref{theorem:245},
Theorem \ref{theorem:241}
and
the fact that
a homeomorphism flow equivalent
  to a nontrivial irreducible SFT must itself be a nontrivial irreducible
SFT.
\end{proof}

% and  $(X_B, \sigma_B)$ is flow equivalent to
%$(X_A, \sigma_A)$. Then there is a group embedding
%$\text{Aut}(\sigma_B)/\left<\sigma_B\right>\to \mathcal{M}_A$.
%\begin{proof}
%Let $F:Y_B\to \mapt X_A$ be a flow equivalence. Then $F$ induces an
%isomorphism $\widehat{F}:\mathcal{M}_B\to\mathcal{M}_A$ defined by
%$\widehat{F}(G)=FGF^{-1}$ for all $G\in\mathcal{M}_B$.
%

\begin{example}
We do not know if
there is any special algebraic
relationship  between the automorphism groups of flow equivalent
nontrivial irreducible SFTs (versus arbitrary nontrivial irreducible SFTs). We  show now that
if $(X_A,\sigma_A)$ and $(X_B,\sigma_B)$ are flow equivalent mixing SFTs, then
it is not necessarily true that the groups
$\text{Aut}(\sigma_A)/\left<\sigma_A\right>$ and
$\text{Aut}(\sigma_B)/\left<\sigma_B\right>$ are isomorphic.
Consider
\[
A=\left( \begin{array}{cc}
  1 & 1 \\
  1 & 0 \\
 \end{array}
 \right)\
 ,
 \quad \quad
 B=A^2
=
\left( \begin{array}{cc}
 2 & 1 \\
 1 & 1 \\
\end{array}
 \right)\
,
\quad  \quad
%\text{ and},
C=[2] \ .
\]

The matrices $B$ and $C$ define flow equivalent SFTs
(if $D$ is $B$ or $C$,
then $\text{coker}(I-D)$ is trivial and $\det{(I-D)}=-1$).
The center of the automorphism group of an irreducible SFT
is the powers of the shift \cite{Ryan}.
But in
$\text{Aut}(\sigma_B)$, the center has a square root
(because $\sigma_{A^2}$ is conjugate to $(\sigma_A)^2$), while
in $\text{Aut}(\sigma_C)$
 and the center does not, because the 2-shift does not
have a square root \cite{Lind84}.
\end{example}

\begin{proposition}\label{theorem:246}
Suppose $(X,T)$ contains a subsystem $(X',T')$
which is a nontrivial irreducible shift of finite type.
Suppose
$F\in\mathcal F(T)$ and $F$ maps
$\mapt X'$ (a subset of $\mapt X$)
 into itself but not onto itself.
Then no element of $[F]$ has an
invariant cross section.
%
%Suppose $Y'$ is a closed subset of $\mapt X$ invariant
%under the suspension flow.
%Suppose $F\in\mathcal{M}_T$ and $F$ maps
%$Y'$ into itself but not onto itself. Then $F$ has no
%invariant cross section.
\begin{proof}
  Any element of $[F]$ will also map $\mapt X'$ into itself but not onto itself.
  So it suffices to   suppose there is an invariant cross section
  $C$ for $F$,
  and derive a contradiction.
By
Proposition \ref{theorem:245},
$F:\mapt X\to \mapt X$ is induced by an automorphism $u$ of the
return map $\rho_c$ to
 $C$. The restriction
$\rho'$ of
$\rho_c$ to $C\cap \mapt X'$ is an irreducible SFT, because
it is flow equivalent to the irreducible SFT $(X',T')$, since
$C\cap \mapt X'$
is a cross section for the flow on $\mapt X'$. Therefore the restriction of
$u$ to $C\cap \mapt X'$, being an injection into
$C\cap \mapt X'$
commuting with $\rho'$,
must be a surjection. But this implies $F$
maps $\mapt X'$ onto itself, which is a contradiction.
\end{proof}
\end{proposition}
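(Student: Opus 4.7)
The plan is to argue by contradiction, first reducing to the case where $F$ itself has an invariant cross section, then transferring the geometric assertion about $F$ into an algebraic statement about an injective endomorphism of an irreducible SFT (via Theorem~\ref{theorem:245}), and finally invoking the fact that such an endomorphism must be surjective.

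First I would verify that the hypotheses ``$F(\mapt X')\subseteq \mapt X'$'' and ``$F(\mapt X')\ne \mapt X'$'' are both invariants of the isotopy class $[F]$. Indeed, if $G\in [F]$ then $GF^{-1}$ is isotopic to the identity in $\mathcal F(T)$, and since $X$ is zero dimensional each map in such an isotopy sends every flow orbit to itself. Therefore $GF^{-1}(\mathcal O)=\mathcal O$ for every orbit $\mathcal O$, hence $F(\mathcal O)=G(\mathcal O)$; as $\mapt X'$ is a union of flow orbits, $F(\mapt X')=G(\mapt X')$. It thus suffices to derive a contradiction from the assumption that $F$ itself admits an invariant cross section $C$.

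Assuming this, Theorem~\ref{theorem:245}(1) produces $u\in\mathrm{Aut}(\rho_C)$ with $F|_C=u$. Let $C'=C\cap \mapt X'$. Because $C$ is a cross section for the suspension flow on $\mapt X$ and $\mapt X'$ is a closed flow-invariant subset, $C'$ is a cross section for the restricted flow on $\mapt X'$, so $(C',\rho_C|_{C'})$ is flow equivalent to the irreducible SFT $(X',T')$ and is therefore itself (conjugate to) a nontrivial irreducible SFT. The hypothesis $F(\mapt X')\subseteq \mapt X'$ gives $u(C')\subseteq C'$, so $u|_{C'}$ is a continuous injection from this irreducible SFT into itself commuting with the shift $\rho_C|_{C'}$.

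The key nontrivial input is that such a self-embedding must be onto: $u(C')$ is a closed $\rho_C|_{C'}$-invariant subset carrying the full topological entropy of $C'$ (being homeomorphic to $C'$ via the conjugacy $u$), and since an irreducible SFT is intrinsically ergodic with a fully supported measure of maximal entropy, every proper closed invariant subsystem has strictly smaller entropy; hence $u(C')=C'$. Flowing this equality along orbits yields $F(\mapt X')=\mapt X'$, contradicting the hypothesis. I expect this surjunctivity step to be the main (only) obstacle; the rest of the argument is a routine passage between a flow and the return map on one of its cross sections via Theorem~\ref{theorem:245}.
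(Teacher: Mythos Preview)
Your argument is correct and follows the same route as the paper's proof: reduce to $F$ itself having an invariant cross section, pass to the induced automorphism $u$ of the return map on $C$, restrict to $C'=C\cap\mapt X'$ to get an injective shift-commuting self-map of an irreducible SFT, and conclude surjectivity to reach a contradiction. You supply more detail than the paper at two points (the isotopy-invariance of the hypotheses, and the entropy/intrinsic-ergodicity justification for surjunctivity), but the strategy is identical.
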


The next result, generalizing a construction from
\cite{Cthesis2011},
shows that flow equivalences  satisfying the assumptions of
Proposition \ref{theorem:246}  are abundant.
We don't understand much about them.

\begin{theorem}
  Let $(X_A,\sigma_A)$ be a nontrivial irreducible SFT.
Let  $(X',\sigma')$ be a proper subsystem which is a nontrivial
irreducible SFT.
Then there is an infinite collection of
flow equivalences $F:\mapt X_A\to \mapt X_A$, representing distinct elements
of $\mathcal M_A$,
such that $F$ maps $\mapt X'$ into itself but not onto itself
(and therefore no element of  $[F]$ has an invariant cross section).
\end{theorem}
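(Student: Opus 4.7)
Plan: The proof splits naturally into two steps: constructing a single $F_0 \in \mathcal F_A$ with $F_0(\mapt X') \subsetneq \mapt X'$, and then extracting from $F_0$ an infinite family of pairwise nonisotopic flow equivalences with the same property.

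Passing from one to many is quick. Because $X_A$ is zero dimensional, every path in $\mathcal F_A$ maps each flow orbit setwise to itself; hence isotopic flow equivalences agree set-wise on any union of flow orbits, and in particular $F(\mapt X')$ is an invariant of $[F] \in \mathcal M_A$. Since $F_0$ is a homeomorphism, applying $F_0$ to the strict inclusion $F_0(\mapt X') \subsetneq \mapt X'$ gives $F_0^{2}(\mapt X') \subsetneq F_0(\mapt X')$, and inductively the sequence $\mapt X' \supsetneq F_0(\mapt X') \supsetneq F_0^{2}(\mapt X') \supsetneq \cdots$ is strictly descending. Therefore the classes $[F_0^{n}]$ for $n \geq 1$ are pairwise distinct in $\mathcal M_A$; each $F_0^{n}$ still maps $\mapt X'$ properly into itself, so Proposition~\ref{theorem:246} applies to each, giving infinitely many distinct classes in $\mathcal M_A$ none of whose representatives has an invariant cross section.

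The substance of the proof is the construction of $F_0$. My approach is to produce a proper nontrivial irreducible sub-SFT $X'' \subsetneq X'$ that is flow equivalent to $X'$, choose a flow equivalence $g \colon \mapt X' \to \mapt X''$ (viewing $\mapt X'' \subset \mapt X_A$ as a subflow of the ambient mapping torus), and extend $g$ to a global self-flow-equivalence $F_0$ of $\mapt X_A$. For the construction of $X''$, I would work in a suitable higher-block (or positive-equivalence) presentation of $X'$ and apply explicit positive-equivalence surgery on the defining graph --- for example, split a self-loop at a chosen vertex into a two-cycle through a new auxiliary vertex via a basic positive equivalence, then delete one of the two new edges --- to obtain a proper sub-SFT $X'' \subsetneq X'$ that is still nontrivial and irreducible; the flow equivalence of $X''$ with $X'$ is then read off from Franks' classification after checking that $\det(I-\cdot)$ and $\cok(I-\cdot)$ agree for $X''$ and $X'$.

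The main obstacle is the extension step: lifting the flow equivalence $g \colon \mapt X' \to \mapt X''$ between subflows of $\mapt X_A$ to a self-flow-equivalence $F_0$ of all of $\mapt X_A$. Here I would invoke the general extension theorem of \cite{BCEbce} advertised in the introduction as the tool yielding exactly the present abundance statement; it applies precisely because $\mapt X'$ and $\mapt X''$ are flow-equivalent irreducible subflows of the common irreducible flow $\mapt X_A$ with compatible transient neighborhood structure. With $F_0$ so constructed we have $F_0(\mapt X') = \mapt X'' \subsetneq \mapt X'$ by design, which together with the iteration argument above completes the proof. (A concrete witness not appealing to the black-box extension theorem, in the spirit of the example promised in the introduction, would require explicit flow-code bookkeeping tracking how transient orbits in $\mapt X_A \setminus \mapt X'$ are absorbed into $\mapt X' \setminus F_0(\mapt X')$; this is more delicate and is where the heavy combinatorial work would lie.)
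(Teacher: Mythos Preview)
Your iteration argument is a valid and pleasant alternative to the paper's. Once you have a single $F_0 \in \mathcal F_A$ with $F_0(\mapt X') \subsetneq \mapt X'$, the observation that $F(\mapt X')$ depends only on $[F]$ (because an isotopy in $\mathcal F_A$ fixes every flow orbit setwise, hence any union of flow orbits) is correct, and the strict chain $\mapt X' \supsetneq F_0(\mapt X') \supsetneq F_0^2(\mapt X') \supsetneq \cdots$ indeed separates the classes $[F_0^n]$. The paper instead produces directly infinitely many distinct (even disjoint) proper sub-SFTs $X_n \subset X'$ flow equivalent to $X'$, extends each flow equivalence $\mapt X' \to \mapt X_n$ separately via \cite{BCEbce}, and distinguishes the resulting $[F_n]$ by the same invariant $F_n(\mapt X') = \mapt X_n$. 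Both routes rely on the Extension Theorem of \cite{BCEbce} in the same way.

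The gap is in your construction of $X''$. The surgery you sketch --- replace a self-loop $e$ at $v$ by a $2$-cycle $v \to w \to v$ (state stretching), then delete one of the two new edges --- does not yield a sub-SFT flow equivalent to $X'$. After deleting, say, the edge $v \to w$, the new vertex $w$ is stranded; the essential part of the resulting graph is exactly the original graph with the self-loop $e$ removed, and removing an edge generically changes both $\det(I-\,\cdot\,)$ and $\cok(I-\,\cdot\,)$ (e.g.\ for $X' = X_{[2]}$ you land on a single fixed point, with determinant $0$ instead of $-1$). So the Franks check you defer will fail. The paper sidesteps this by combining Franks' classification with Krieger's Embedding Theorem: within the flow equivalence class of $X'$ there are irreducible SFTs of arbitrarily small positive entropy and with periodic data dominated by that of $X'$, and Krieger embeds these as proper subsystems of $X'$. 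Substituting that existence argument for your surgery (to obtain one $X'' \subsetneq X'$, after which your iteration takes over) would close the gap and yield a proof genuinely different in its second step from the paper's.
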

\begin{proof}
  From the complete invariants
  for flow equivalence
of nontrivial irreducible SFTs, and Krieger's Embedding Theorem,
one can find a sequence  $X_1, X_2, \dots $ of  distinct
(even disjoint) nontrivial irreducible SFTs
which are proper subsystems of $X'$ and are
flow equivalent to $X'$.
   By the Extension Theorem in \cite{BCEbce},
 a flow
 equivalence $F'_n: \mapt X' \to \mapt X_n\subset \mapt X_A$
 extends to a flow equivalence
$F_n: \mapt X_A \to \mapt X_A$.
 The classes $[F_n]$ are distinct, because the images
 $F'_n(\mapt X')$ are distinct.
\end{proof}

Next we exhibit an example,
 not relying on an
 appeal to an extension theorem, of a flow equivalence $F$
 such that no element of 
$[F]$  has an 
 invariant cross section.

\begin{example}\label{concretefe}
  Let $\sigma : X\to X$ be the full shift on three symbols
  $\{0,1,2\}$. If $W=W_1W_2 ... $ is any sequence on these
  symbols and $W_1\neq 2$,   then $W$ has a unique prefix in the set
  $\mathcal W=\{ 00, 01, 02,1\}$;
  likewise, $W$ has a unique prefix in the set
  $\mathcal W'=\{ 10, 11, 12,0\}$.  Let $\mathcal W \to \mathcal W'$
  be the bijection  given by
  $00 \mapsto 0, 01 \mapsto 10, 02 \mapsto 12, 1 \mapsto 11$.
  We claim there is a flow equivalence
  $F: \mapt X\to \mapt X$
  corresponding to the change $2W \to 2W'$ wherever
  $W\in \mathcal W$ and $2W$ occurs in a point of $X$.
  Let $X' \subset X$ be the full 2-shift on symbols
  $\{1,2\}$; let $X''$ be the points of
  $X'$ in which the word $212$ does not occur. Then $F$ maps
  $\mapt X'$ onto $\mapt X''$, a proper subset of
  $\mapt X'$, so no element of $[F]$ has an invariant cross section. 

  To be precise, we will construct $F$ as a flow code, as described
  in the appendix. First, we define a discrete cross section
  $C$ of $X$ as the disjoint union of two ``state sets'' $V_0$ and $V_1$,
  with
  $V_0=\{ x \in X: x_{-1}=2\}$,
  $V_1=\{x \in X: x_{-2}x_{-1} \in
  \{ 21, 00, 01, 10,11\}$. If $x\in C$, and $k$ is the least positive
  integer such that $\sigma^k(x) \in C$, then
  $x_0\dots x_{k-1}$
  is a $C$-return word $W$, of length $k$ (here $k$ is 0 or 1).
  Whether  $\sigma^k(x)$ is in $V_0$ or $V_1$ is determined by the state
  set containing $x$ and the return word $W$.
  Thus the return words can be used to label edges of a directed
  graph with states $V_0, V_1$. The adjacency matrix $\widetilde A$
  of this word-labeled graph (whose entries are formal sums of labeling words),
  and the adjacency matrix $A$ of the
  underlying graph, are as follows:
  \[
  \widetilde A =
  \begin{pmatrix} 2+ 02 &00 + 01 +1  \\
    2  &  0+1
  \end{pmatrix} \ ,
  \qquad
  A =
  \begin{pmatrix} 2 & 3 \\
    1  & 2
  \end{pmatrix} \ .
  \]
  Similarly, we define another discrete cross section, $C'$,
  as the disjoint union of state sets
$V_0=\{ x \in X: x_{-1}=2\}$,
  $V'_1=\{x \in X: x_{-2}x_{-1} \in
  \{ 20, 00, 01, 10,11\}$. As happened with $C$,
  the $C'$ return words label edges of a graph with states
  $V_0$ and $V'_1$, with labeled and unlabeled adjacency
  matrices
\[
  \widetilde{A'} =
  \begin{pmatrix} 2+ 12 & 0 + 10+ 11  \\
    2  &  0+1
  \end{pmatrix} \ ,
  \qquad
  A'=A =
  \begin{pmatrix} 2 & 3 \\
    1  & 2
  \end{pmatrix} \ .
  \]
  Now we may define a homeomorphism $\phi:C\to C'$,
  taking $V_0$ to $V_0$ and $V_1 $ to $V'_1$, by a
  $C, C'$ word block code $W_0 \mapsto W'_0$
  described by an input-output
  automaton which simply changes word labels:
  \[
  \begin{pmatrix}
    2 \ \to  \ 2 \ ,\
    02  \ \to  \ 12 \ \quad \ \ &
    00 \ \to  \   0\ , \
    01  \ \to  \ 10\ ,\
    1 \ \to  \ 11  \\
    2 \ \to  \ 2  &  0 \ \to  \ 0\ ,  1 \ \to  \ 1
  \end{pmatrix} \ .
  \]
  This $\phi$ is a conjugacy of the return maps to $C$ and $C'$
  (each of which is conjugate to the SFT $\sigma_A$). The
  induced map $\mapt \phi: \mapt  X \to \mapt X$
  is the  flow equivalence $F$ we require.
  \end{example}

\begin{question}
Is the mapping class group of a nontrivial irreducible SFT
generated by elements which
 have an invariant cross section?
\end{question}

\begin{proposition}\label{theorem:249}
Let $(X_A,\sigma_A)$ be a nontrivial irreducible SFT, with 
$F\in\mathcal F_A$. If there is a circle $\mathcal{C}$ such that
$\{F^n(\mathcal{C}):n\in\mathbb{N}\}$ is an infinite collection of
circles then no element of $[F]$ has an invariant cross section.
\begin{proof}
  If $F$ has an invariant cross section $C$, then $F$ is determined
  up to isotopy by 
  an automorphism $U$ of the return map $\rho_C$.
  As $\rho_C$ is another
  irreducible SFT, every periodic point of $\rho_C$ lies in a finite
  $U$-invariant set, so every circle in $X_A$ lies in a finite
  $F$-invariant set of circles.  
\end{proof}
\end{proposition}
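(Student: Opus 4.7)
The plan is to prove the contrapositive: assume some representative of $[F]$ has an invariant cross section, and show that every circle $\mathcal C$ in $\mapt X_A$ has a finite $F$-orbit among circles. Since the action of $\mathcal M_A$ on circles is well defined (indeed, by Theorem \ref{theorem:231} and its corollary, it is faithful on $\mathcal M_A$), we may replace $F$ by any isotopic representative without changing the set $\{F^n(\mathcal C):n\in\N\}$. So we may assume from the outset that $F$ itself has an invariant cross section $C$.

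Next I would apply Theorem \ref{theorem:245}(1) to $F$ at the cross section $C$: this produces an automorphism $U=F|_C$ of the first return map $\rho_C:C\to C$, and $F$ is induced (up to isotopy, though here one has the pointwise equality on $C$) by $U$. Because $C$ is a zero dimensional cross section of $\mapt X_A$ for the suspension of a nontrivial irreducible SFT, the return map $\rho_C$ is flow equivalent to $\sigma_A$, and so $(C,\rho_C)$ is itself a nontrivial irreducible SFT. The key structural fact I now need is that circles in $\mapt X_A$ correspond bijectively to periodic orbits of $\rho_C$ (each circle meets $C$ in a finite set which is a periodic orbit of $\rho_C$, and conversely each periodic orbit of $\rho_C$ suspends to a circle). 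Under this correspondence, the action of $F$ on circles of $\mapt X_A$ intertwines with the action of $U$ on periodic orbits of $\rho_C$.

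Now the conclusion is immediate from the basic fact that an automorphism of an SFT preserves the (finite) set of periodic points of each period $n$, and hence permutes the finite set of periodic orbits of each length $n$. Thus for every periodic orbit $\mathcal O$ of $\rho_C$, the $U$-orbit $\{U^n(\mathcal O):n\in\N\}$ is a finite subset of a finite set, hence finite. Translating back, for every circle $\mathcal C\subset\mapt X_A$, the $F$-orbit $\{F^n(\mathcal C):n\in\N\}$ is finite. This contradicts the hypothesis of the proposition, completing the proof.

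The only place requiring a little care is the bijection between circles of $\mapt X_A$ and periodic orbits of $\rho_C$, together with the assertion that it is equivariant with respect to $F$ and $U$; I expect this to be routine from the fact that $C$ meets every flow orbit and that $F$ restricts to $U$ on $C$, but it is the one place where the argument is not purely formal.
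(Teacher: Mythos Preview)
Your argument is correct and follows essentially the same route as the paper's proof: pass to an isotopic representative with an invariant cross section, identify $F|_C$ with an automorphism $U$ of the return map $\rho_C$ (an irreducible SFT), and use that $U$ permutes the finite set of periodic orbits of each given length. You simply spell out in more detail the circle/periodic-orbit correspondence and the equivariance that the paper leaves implicit.
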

We do not know if the converse to Proposition \ref{theorem:249}
  is true.
  \begin{example}
    In Example \ref{concretefe}, the forward
    $F$ orbit of the circle through the periodic
    orbit $(21)^{\infty}$ is the union of  infinitely many circles
    (those through the periodic orbits of $(21^n)^{\infty}$, $n \geq 1$).
    \end{example}

%%%%%%%%%%%%%%%%%%%%%%%%%%%%%%%%%%%%%%%%%%%%%%%%%%%%%%%%%%%%%%%%%%%%%%%%
\section{Residual finiteness} \label{sec:residual}
\begin{definition}
  Let $G$ be a group. $G$ is {\it residually finite} if for every pair of
distinct  elements $g,h$ in $G$, there is a homomorphism $\phi$ from
$G$ to a finite group such that $\phi (g) \neq \phi(h)$.
\end{definition}

 The automorphism group of a
subshift need not be residually finite. There is a  minimal subshift whose
  automorphism
group contains a copy of $\Q$ \cite{BLR}, and therefore is not
residually finite.  At another extreme, we thank V. Salo for pointing out to us
residual finiteness often fails to hold for reducible systems,
as in
work in progress of Salo and Schraudner, and examples such as the
following, related to examples in \cite{SaloGroups2015}.
Let $S_{\infty}$ denote the increasing union of the groups $S_n$,
the permutations of $\{ 1, 2, \dots , n\}$,
identified with the permutations $\pi$ of $\N$ such that
 $\pi(k)=k$ if $k>n$. 
Then $S_{\infty}$ contains
$A_{\infty}$, the increasing union of the alternating groups $A_n$.
Because $A_{\infty}$ is an infinite simple group, it is not residually
finite. Let
$A=\left( \begin{smallmatrix}
    1&1&0\\0&1&1\\0&0&1\end{smallmatrix}\right)$. One
easily checks that
$\text{Aut}(\sigma_A)$ contains a copy of $S_{\infty}$, and thus is
not residually finite.

In contrast, the automorphism group of an irreducible shift of finite type
(or any subshift with dense periodic points) is
  residually finite \cite{BLR}.

\begin{theorem} \label{notresiduallyfinitetheorem}
  Let $X_A$ be a nontrivial irreducible SFT. Then
  $\mathcal{M}_A$ is not residually finite.
\begin{proof}
%A permutation of
%$\N$ is finitely  supported if it equals the identity off a finite
%set.  Let $S_{\infty}$ be the group of finitely supported permutations
%of $\N$. We identify
For a proof, it
suffices to define a monomorphism $S_{\infty} \to \mathcal M_A$.
After passing from $X_A$ to a topologically conjugate shift,
we may assume that there is a symbol $\alpha$ such that
there are infinitely many distinct words $V_1, V_2, \dots $ such that
for all $k$,
$\alpha V_k \alpha$ is an allowed word and $\alpha$ does not
occur in $V_k$. Informally, an element $\pi$ of $S_{\infty}$ will
act simply by replacing words $\alpha V_k \alpha$ with
$\alpha V_{\pi(k)} \alpha$.

To make this precise we use flow codes (described in Appendix
\ref{sec:flowcodes}).
For $n$ in $\N$, define $\ell (n) =|V_n| +1$, and
$K_n =\{x\in X_A: x_0\dots x_{\ell(n)} = \alpha V_n \alpha \}$.
Given $N$, define a discrete cross section
\[
C_N = X_A \setminus
\Big( \cup_{n=1}^N \cup_{j=1}^{\ell (n) -1} \sigma_A^j K_n \Big)\ .
\]
Let $\mathcal W_N$ be the set of return words to $C_N$. 
Given $\pi $ in $S_N$, define a word block code
\begin{align*}
  \Phi_{\pi} :\  \mathcal W_N &\to \mathcal W_N \\
    \alpha V_j  &\mapsto  \alpha V_{\pi (j)} \ , \quad 1\leq j \leq N \ ,\\
   W &\mapsto W \ , \quad \quad \ \ 
  \text{if }W\text{ is a symbol .}
\end{align*}
$\Phi_{\pi}$ defines a continuous map 
$\phi_{\pi}: C_N \to C_N$.
The rule $\pi \mapsto \phi_{\pi}$ defines a  monomorphism from $S_N$
into the group of homeomorphisms $C_N\to C_N$,
and therefore
$\pi \mapsto \mapt \phi_{\pi}$ defines a group monomorphism
$S_N \to \mathcal F_A$. It is then easy to see (from distinct actions
on periodic orbits) that $\pi \mapsto [\phi_{\pi}]$ is a group
monomorphism $S_N\to \mathcal M_A$.  Finally, the definition
of $\phi_{\pi}$ does not change with increasing $N$,
so we have an embedding $S_{\infty} \to \mathcal F_A$ producing 
 the embedding $S_{\infty} \to \mathcal M_A$. 
\end{proof}
\end{theorem}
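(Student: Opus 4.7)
The plan is to embed a non-residually-finite group into $\mathcal{M}_A$ and then invoke the fact that residual finiteness is inherited by subgroups. A natural candidate is $S_\infty$, or its subgroup $A_\infty$: the latter is an infinite simple group, hence not residually finite (any homomorphism to a finite group has nontrivial kernel, which by simplicity is all of $A_\infty$). So the only real work is to exhibit a monomorphism $S_\infty \hookrightarrow \mathcal{M}_A$.

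To construct such an embedding, I would exploit the combinatorial richness of an irreducible SFT. Passing if necessary to a higher block recoding, I can find a symbol $\alpha$ and infinitely many distinct words $V_1, V_2, \dots$ in which $\alpha$ does not appear, such that each $\alpha V_k \alpha$ is an $X_A$-word. The intuition is: given $\pi \in S_N \subset S_\infty$ fixing indices outside $\{1,\dots,N\}$, define a map on $X_A$ that substitutes $\alpha V_{\pi(k)} \alpha$ for $\alpha V_k \alpha$ whenever such a block appears, leaving everything else alone.

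To realize this substitution as a well-defined flow equivalence I would use flow codes (from the appendix). Specifically, I would build a discrete cross section $C_N$ of $X_A$ whose return words to $C_N$ include the blocks $\alpha V_k$ for $1 \le k \le N$ as single return words, so that the permutation $\pi$ induces a word block code on the return alphabet that is a conjugacy of the return map $\rho_{C_N}$ to itself. This return-map automorphism, extended by Theorem \ref{theorem:245}(2), produces a flow equivalence $\phi_\pi$, and by Theorem \ref{theorem:241}-style reasoning, the rule $\pi \mapsto [\phi_\pi]$ gives a group homomorphism $S_N \to \mathcal{M}_A$. The definition is compatible across $N$, yielding a homomorphism $S_\infty \to \mathcal{M}_A$.

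The main obstacle is establishing injectivity into $\mathcal{M}_A$ — it is routine that distinct $\pi$ give distinct flow equivalences, but one needs them to remain distinct modulo isotopy. Here I would use Theorem \ref{theorem:231} and the faithful action on circles (Corollary \ref{theorem:232}): for a nontrivial $\pi$ with $\pi(k) \neq k$, a long periodic orbit built by repeatedly concatenating a block containing $\alpha V_k \alpha$ maps to a periodic orbit whose defining block contains $\alpha V_{\pi(k)}\alpha$ in the corresponding position, producing a genuinely different circle. Once injectivity is secured, the embedded copy of $A_\infty \le S_\infty$ inside $\mathcal{M}_A$ precludes residual finiteness.
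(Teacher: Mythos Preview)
Your proposal is correct and follows essentially the same approach as the paper: both embed $S_\infty$ via flow codes built from a marker symbol $\alpha$ and words $V_k$, define the substitution $\alpha V_k \mapsto \alpha V_{\pi(k)}$ as a word block code on return words to a discrete cross section $C_N$, and verify injectivity into $\mathcal{M}_A$ through the action on periodic orbits/circles. The only cosmetic difference is that you explicitly invoke Theorem~\ref{theorem:231} and Corollary~\ref{theorem:232} for injectivity, whereas the paper simply says ``from distinct actions on periodic orbits''.
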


%One could presumably elaborate on the last construction to show that the
%direct sum of countably many copies of $S_{\infty}$ embeds into
%$\mathcal M_A^0$.

%\begin{problem} Characterize the  countable groups with solvable word problem
%  which do not embed into  $\mathcal M_A$.
%  \end{problem}xs

The sofic groups introduced by Gromov
are an important
 simultaneous generalization of amenable and
residually finite groups.
(See e.g. \cite{CL2015,Pestov2008, Weiss2000} for definitions and  a start on the
large literature around sofic groups)
So far, no countable group has been proven to be
nonsofic. The mapping class group of a nontrivial irreducible SFT
$\sigma_A$ is
not residually finite, and it is not amenable (as $\mathcal M_A$
contains
a copy of $\text{Aut}(\sigma_A)/\left<\sigma_A\right>$, which contains free
groups
\cite{BLR}).

\begin{question}
Is $\mathcal M^o_A $ a sofic group?
\end{question}

\begin{remark}
With a somewhat more complicated proof appealing to canonical covers,
we expect that the basic idea of
Theorem \ref{notresiduallyfinitetheorem} can be used to show that the
mapping class group of a positive entropy sofic shift
 is not
residually finite.
Likewise, we expect  a subshift which is a  positive entropy
synchronized system \cite{BH1986}
will have a mapping class group which is not residually finite.
\end{remark}

\section{Solvable word problem} \label{sec:solvable}
The purpose of this section is to prove
Theorem \ref{mcgdecide}, which shows
that  the mapping class group
of an irreducible SFT has solvable word problem.
We begin with definitions and context.

The {\it alphabet} $\mathcal A(T)$ of a subshift $(X,T)$ is its symbol set.
For $j\leq k$, $\mathcal W(X,j,k) $ denotes
$\{x_j \dots x_k: x\in X\}$, the words of length $k-j+1$
occurring in points of $X$.
The language of a subshift $(X,T)$ is $\cup_{n\geq 0} \mathcal W(X,0,n)$.

\begin{definition} A subshift $(X,T)$ has a decidable language
  if there is an algorithm which given any finite word $W$ on
$\mathcal A (T)$ decides whether $W$ is in the language of $X$.
\end{definition}

\begin{definition} A group $G$ has solvable word problem if for
  every finite subset $E$  of $G$ there is an algorithm which given any
  product $g=g_m \dots g_1$ of elements of $E$ decides whether
  $g$ is the identity.
  \end{definition}

An old observation of Kitchens \cite{BLR} notes
that   the automorphism group of a shift of finite type has a solvable word
problem.
We thank Mike Hochman for communicating to us
the following sharper result.

\begin{proposition} \label{kitch}
  Suppose $(X,T) $ is a subshift with decidable language
  (for example, any shift of finite type).
  Then  $\text{Aut}(T)$ has
  solvable word problem.
\end{proposition}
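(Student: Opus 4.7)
The plan is to rely on the Curtis--Hedlund--Lyndon theorem: every element of $\text{Aut}(T)$ is a sliding block code, so it is determined by a finite local rule with a specified window radius, and composition of block codes is again a block code (with a window size bounded by the sum of the constituent window sizes) whose local rule is computable from the constituent local rules.

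Given a finite subset $E \subset \text{Aut}(T)$, for each $e \in E$ fix a radius $r_e$ and a local rule $\Phi_e \colon \mathcal W(X,-r_e,r_e) \to \mathcal A(T)$ presenting $e$ as a block code. Now suppose we are given a word $g = g_m \cdots g_1$ with each $g_i \in E$. Set $r = \sum_{i=1}^{m} r_{g_i}$. First I would compose the local rules to produce a single local rule $\Phi \colon \mathcal A(T)^{2r+1} \to \mathcal A(T)$ representing $g$, by iteratively applying $\Phi_{g_1}$ coordinatewise to a $(2r+1)$-window, then $\Phi_{g_2}$ to the resulting shorter word, and so on; this is a purely mechanical, finite computation. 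The automorphism $g$ acts as the identity on $X$ if and only if for every $W = W_{-r}\cdots W_r \in \mathcal W(X,-r,r)$, one has $\Phi(W) = W_0$.

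Since the language of $(X,T)$ is decidable, we can effectively enumerate all finite words on $\mathcal A(T)$ of length $2r+1$, test each for membership in $\mathcal W(X,-r,r)$ using the decidability hypothesis, and check $\Phi(W)=W_0$ on those that belong. This is a finite procedure, so it gives a decision algorithm for whether $g$ is the identity. The uniformity in $m$ is automatic because the radius $r$ and the local rule $\Phi$ depend computably on the input word $g_m\cdots g_1$ and on the finitely many fixed presentations of elements of $E$.

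The main obstacle, if any, is simply bookkeeping: one must confirm that composition of block codes is computable and that the bound $r=\sum r_{g_i}$ is sufficient, both of which are standard. The only genuine hypothesis being used beyond Curtis--Hedlund--Lyndon is decidability of the language, which is exactly what lets us restrict the check on $\Phi$ from all of $\mathcal A(T)^{2r+1}$ to the finite (and computable) subset $\mathcal W(X,-r,r)$.
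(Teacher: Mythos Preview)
Your proof is correct and follows essentially the same approach as the paper's: both use Curtis--Hedlund--Lyndon to present the generators as block codes, mechanically compose the local rules with an additive bound on the window, and then use decidability of the language to restrict the identity check $\Phi(W)=W_0$ to the finite computable set of allowed $(2r+1)$-words. The only cosmetic difference is that the paper fixes a common radius $N$ for all generators (so the composite has radius $kN$), whereas you sum individual radii; your remark that decidability is needed precisely to cut down from $\mathcal A(T)^{2r+1}$ to $\mathcal W(X,-r,r)$ matches the paper's observation that the domain of the composite rule $\Phi$ may properly contain the set of allowed words.
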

\begin{proof}
Given $E=\{ \phi_1, \dots , \phi_m\} \subset \text{Aut}(T)$,
there are $N\in \N$ and functions
$\Phi_i: \mathcal W(-N,N)(T)  \to \mathcal{A} (T)$,
  $1\leq i \leq m$, such that
  $\Phi_i$ defines $\phi_i$ as a block code, i.e.
  for all $x$ and $n$,
  $(\phi_i x)_n = \Phi_i (x_{n-N} \dots x_{n+N})$.
  Suppose $k\in \N$ and $\phi=\phi_{j_k}\dots \phi_{j_1}$. Then
  for all $x $ in $X$, $(\phi x)_i =
  \Phi (x_{i-kN} \dots   x_{i+kN})$, where $\Phi$
  is a rule  mechanically
  computed   from the rules $\Phi_k , \dots , \Phi_1$
  \cite{H}.
However,  the domain of $\Phi$ might properly contain the
set $\mathcal W(-kN,kN)$ (even when the set $\mathcal W(-N,N)$
used to define the $\Phi_i$ is known).
  The map $\phi$ is the identity if and only if
  $    \Phi (x_{-kN} \dots   x_{kN}) = x_0$ for all
  words $x_{-kN} \dots   x_{kN}$  in
  $\mathcal W(-kN,kN)$; because $(X,T)$ has
decidable language,
  this set is known and can be checked.
\end{proof}

\begin{definition} \label{explicit}
A locally constant  function $p$ on $X$
is {\it given by an explicit rule} if for some $N$
there is given a
function $P$ from some superset of
$ \mathcal W(X,-N,N)$  to $\Z$ such that
for all $x$ in $X$, $p(x) = P(x_{-N} \dots x_N)$
  (or if $p$ is given by data from which such a $P$
could be algorithmically produced).
\end{definition}

\begin{definition}
  A subshift $(X,T)$ has solvable $\Z$-cocycle triviality
  problem if there is an algorithm which decides for any
  explicitly given  continuous (i.e. locally constant)
  function $p: X\to \Z$
  whether there is a continuous function $q: X\to Z$ such
  that $p=(q\circ T) - q$ (i.e., $p$ is a coboundary
  in $C(X,\Z)$, with transfer
  function $q$).
\end{definition}

If a subshift $(X,T)$ has solvable word problem,
then for an explicitly given $p$ in $C(X,\Z)$  known to
be a coboundary there is a procedure which will
produce an explicitly defined  $q$ such that
$p=(q\circ T) - q$ (enumerate the possible $q$
and test them).

For a positive integer $j$,
   a subshift $(X,T)$ with language $\mathcal L$
    is a {\it j-step} shift of finite
    type if for all words $U,V,W$ in $\mathcal L$, if $V$ has
    length $j$ and $UV\in \mathcal L$ and $VW \in \mathcal L$,
    then $UVW\in \mathcal L$. 
  
\begin{remark}
  As is well known, for an  irreducible  
$j$-step shift of finite type
$(X,T)$, and $p$ defined by $P,N$ as in Definition \ref{explicit},
the following are equivalent. 
\begin{enumerate}
\item  There is a continuous $q: X\to \R$ such that
  $p=(q\circ T) - q$.
\item
   There is a continuous $q: X\to \Z $ such that
   $p=(q\circ T) - q$, and for all $x$, $q(x)$ depends only
   on the word $x_{-N}\dots x_N$.
 \item If $x\in X$ and
   $T^k(x)=x$
and  $k\leq M:=\max \{j+1, 2N+1\}$, then
   $\sum_{n=0}^{k-1} p(T^nx) = 0 $.
   \end{enumerate}
(Here, $(1) \implies (2)$ because $T$ has a dense orbit. 
After passage to the $M$-block presentation
  (see e.g. \cite[Prop. 1.5.12]{LM}),
  $p$ presents as an edge labeling on an edge SFT, and 
the implication
$(3)\implies (1)$ reduces to an old graph argument
(recalled in \cite[Lemma 6.1]{BCEfei}), which
also gives
a decent algorithm for producing the
transfer function $q$ of (2).)
\end{remark}
Clearly, an irreducible SFT
has solvable
$\Z $-cocycle triviality problem.

To prove Theorem \ref{mcgdecide}, we emulate the proof of
Proposition \ref{kitch}, using flow codes in place of
block codes.
There are two difficulties.
First, we need for flow codes a computational analogue of
composition of block codes.
This is addressed in Appendix \ref{sec:flowcodes}.
%if for $i=1,2$ there are
%flow equivalences $F_i: \mapt X \to \mapt X$ given
%by flow codes
%$(\Phi_i, C_i, D_i)$, if $D_1\neq C_2$ then we do not
%immediately have a rule to produce a composition flow
%code for $F_2 \circ F_1$.
Second, we need an algorithm
to determine
triviality of $[F]$ in $\mathcal M(T)$
when $F$ is given by a flow code.
We  address the latter issue now.

A subshift $(X,T)$ is  infinite if the set $X$ contains
infinitely many points. A subshift is transitive if
it has a dense orbit.

\begin{lemma} \label{discretetrviality}
  Suppose $(X,T) $ is a subshift,
 $C$ is an explicitly given
    discrete   cross section  for $(X,T)$
    and $\phi : C\to D $
    is a flow code defined
    by an explicitly given word code $(\Phi , C)$.

    Then the following are equivalent.
    \begin{enumerate}
    \item $[\mapt \phi]$ is trivial in $\mathcal M(T)$.
    \item
      There is a continuous function  $b : C \to \Z$
      such that  for all
  $x$ in $C$, the following hold:
      \begin{enumerate}
    \item
    The word $W'_0(x)$ equals the word
    $x_{b (x)} \dots x_{b (x) + |W'_0|-1} $ .
  \item
    $b (x) + |W'_0(x)| = |W_0(x)| + b (T^{|W_0(x)|}(x) ) $
  \end{enumerate}
  \end{enumerate}
   \end{lemma}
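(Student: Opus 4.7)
The plan is to view condition (2) as the combinatorial manifestation of the assertion that $\mapt\phi$ shifts each flow orbit of $\mapt X$ by a continuous amount, with $b(x)\in\Z$ recording that $\phi(x)=T^{b(x)}(x)$ for $x\in C$.

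For $(1)\Rightarrow (2)$, suppose $[\mapt\phi]$ is trivial. The implication $(1)\Rightarrow (2)$ of Theorem~\ref{theorem:231}, which holds for any zero-dimensional $(X,T)$, shows that $\mapt\phi$ preserves every suspension flow orbit. Thus for each $x\in C$ the point $\phi(x)$ lies on the $T$-orbit of $x$, producing a locally constant function $b\colon C\to\Z$ with $\phi(x)=T^{b(x)}(x)$. Condition~(a) is immediate from the definition of $W'_0(x)$ as the length-$|W'_0(x)|$ initial word of $\phi(x)$. Condition~(b) is the equality of exponents in the return-map conjugacy $\rho_D\circ\phi=\phi\circ\rho_C$ applied at $x$, unpacked using $\phi(y)=T^{b(y)}(y)$ and $\rho_C(x)=T^{|W_0(x)|}(x)$.

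For $(2)\Rightarrow (1)$, given such a $b$, an easy induction along the $\rho_C$-iterates of $x$, combining (a) at each step with the cumulative offset tracked by (b), upgrades (a) to $\phi(x)=T^{b(x)}(x)$ as points of $X$ for every $x\in C$. By the flow-code construction of Appendix~\ref{sec:flowcodes}, $\mapt\phi$ affinely reparameterizes each $C$-return interval onto the corresponding $D$-return interval, so there is a function $\tau\colon\mapt X\to\R$ with $\mapt\phi(y)=\alpha_{\tau(y)}(y)$, piecewise affine between $C$-return instants and taking value $b(x)$ at $[x,0]$ for $x\in C$. Condition~(b) forces the one-sided limits of $\tau$ at each successive return time to agree, so $\tau$ is continuous on $\mapt X$, and then $h_s(y)=\alpha_{s\tau(y)}(y)$, $0\le s\le 1$, is a continuous isotopy in $\mathcal F(T)$ from the identity to $\mapt\phi$.

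The hard part is the bookkeeping to justify that $\tau$ is globally continuous (which is exactly what condition~(b) encodes) and that each $h_s$ is indeed a self flow equivalence (which is immediate, since $h_s$ acts on each orbit by a bounded continuous time translation). With those points settled, both directions amount to unpacking the flow-code definitions of Appendix~\ref{sec:flowcodes}.
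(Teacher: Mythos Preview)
The $(2)\Rightarrow(1)$ direction is essentially correct and, with the explicit isotopy $h_s(y)=\alpha_{s\tau(y)}(y)$, is a bit more self-contained than the paper's appeal to \cite[Theorem~3.1]{BCEfei}. One small imprecision: $h_s$ is not a ``time translation'' since $\tau$ varies along orbits. What you need is that on each orbit the map $t\mapsto t+s\,\tau(\alpha_t y)$ is the convex combination $(1-s)t+s\bigl(t+\tau(\alpha_t y)\bigr)$ of two strictly increasing functions, hence strictly increasing; that is why each $h_s$ is a flow equivalence.

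The $(1)\Rightarrow(2)$ direction has a genuine gap. From triviality of $[\mapt\phi]$ you correctly deduce, via the general implication $(1)\Rightarrow(2)$ of Theorem~\ref{theorem:231}, that $\mapt\phi$ preserves every flow orbit, so $\phi(x)$ lies on the $T$-orbit of $x$. But you then assert this ``produces a locally constant function $b$'' with $\phi(x)=T^{b(x)}(x)$, without justification. Orbit preservation alone does \emph{not} yield a continuous choice of $b$: by \cite[Theorem~3.1]{BCEfei} (which the paper quotes), among orbit-preserving self flow equivalences the existence of a continuous $\beta:\mapt X\to\R$ with $F(y)=\alpha_{\beta(y)}(y)$ is \emph{equivalent} to $[F]$ being trivial, and there do exist orbit-preserving flow equivalences not isotopic to the identity (see the Remark following Theorem~\ref{theorem:231}). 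At a periodic point $x$ the integer $b(x)$ is only determined modulo the period, and near such points there is no reason for the unique values on aperiodic neighbors to stay bounded. The paper closes this gap by invoking \cite[Theorem~3.1]{BCEfei} to obtain the continuous $\beta$ and then setting $b(x)=\beta([x,0])$. You should either do the same, or use the \emph{full} isotopy $(h_t)$ rather than just its endpoint: lift the path $t\mapsto h_t([x,0])$ continuously in the flow-time coordinate starting from $0$, and take $b(x)$ to be the value of the lift at $t=1$; joint continuity of the isotopy then gives continuity of $b$.
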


\begin{proof}
    Let $\alpha_t$ denote the time $t$ map of the suspension flow
    on $\mapt X$.
     Let $\Phi : W_{-N} \dots W_N \to W'$ be
     the explicitly given word code for $\phi$,
     mapping $(2N+1)$-blocks of
  $C$-return words
  to a return word for $D$. For $x$ in $C$
  with return block $W_{-N}^N(x)$, there is a
  concrete description of return times of $x$ to $C$ and
  $\mapt \phi (x)$ to $D$:
  \begin{equation} \label{returns}
    \tau_C(x)= |W_0(x)|\quad\quad
    \text{and}\quad\quad
    \tau_D (\mapt \phi (x)) = |W'_0(x)| \ .
  \end{equation}
  The condition (2)(b) states that the functions
  $x\mapsto \tau_C(x)$ and $x\mapsto \tau_D(\mapt \phi (x))$ are
  cohomologous in
$C(C,\Z)$, with respect to the return map
 $\rho_C: x\mapsto T^{|W_0(x)|}(x)$. 

For  a flow equivalence
$F : \mapt X\to \mapt X$ which maps each orbit to itself,
and maps a cross section $C$ onto a cross section $D$,
the following conditions  are equivalent
 (see e.g. \cite[Theorem 3.1]{BCEfei}):
  \begin{enumerate}
  \item $F$  is trivial in $\mathcal (T)$.
  \item
There is a continuous function $\beta : \mapt X \to \R$
 such that  $F: y \mapsto
  \alpha_{\beta (y)} (y)$, for all $y$ in $\mapt X$.
\end{enumerate}
  In the case $F= \mapt \phi$, given the second condition,
  $\beta$ must assume integer values on $C$.
  Define $b : C \to \Z$ by
  $b: x\mapsto \beta ([x,0])$.
  Then $b$ satisfies the conditions (a),(b) of the Lemma.
  The (cocycle) condition (b) is a consistency condition:
  each side of (b) gives a computation of the
  number $t$  (strictly greater than $b(x)$)
  such that $\mapt \phi $ maps the orbit interval
  $\{ \alpha_s ([x,0]): 0\leq s \leq |W_0(x)|\}$
  onto the orbit interval
  $\{ \alpha_s ([x,0]): b(x)\leq s \leq t\}$.

  Conversely, suppose $b: C\to \Z$ is a continuous function
 satisfying (a) and (b).
  By induction, using the given
  word block code, we see that for all $x$ in $C$ and
  all nonnegative integers $k$, for $s= \sum_{i=0}^k |W'_i(x)| $
 we have
 $(\phi x)_0 \dots (\phi x)_k =
 x_{b(x)} \dots x_{s-1}$. Because the return map to $C$ is a
 homeomorphism, we then have for $r = \sum_{i=-k}^{-1} |W'_i(x)|$
 that
 $(\phi x)_{-r} \dots (\phi x)_{-1} =
x_{b(x)-r}\dots  x_{b(x)-1} $.
Consequently, for all $x$ in $C$,
$\mapt \phi : [x,0] \to [T^{b(x)}, 0]$,
    so $\mapt \phi $ maps each flow orbit to itself.
  Finally, from $b$ we can define the continuous function
  $\beta$ of condition (2), as follows.
  For $x$ in $C$, set $r(x) = |W'_0(x)| / |W_0(x)|$ and
  \[
  \beta :  [x,t] \mapsto [x, b(x)+ tr(x)] \ , \quad
  x\in C, 0\leq t < |W_0(x)| \ .
  \]
  This rule defines $\beta$ on the entire mapping torus.
  The piecewise linearity
  of $\beta$  on the flow segments between returns to the cross section
  agrees with the flow code definition.
\end{proof}

\begin{lemma}   \label{mcgdecidelemma}
  Suppose $(X,T) $ is a transitive  subshift
  (for example, any irreducible SFT) with
decidable language and  solvable $\Z$-cocycle
triviality problem.
Suppose $C$ is an explicitly given
    discrete   cross section  for $(X,T)$
    and $\phi : C\to \overline{C} $
    is a flow code defined
    by an explicitly given word code $(\Phi , C)$.

  Then there is a procedure which decides whether
    $\mapt \phi$ is a flow equivalence  $\mapt X \to \mapt X$
such that $[\mapt \phi]$ is trivial in $\mathcal M(T)$.
   \end{lemma}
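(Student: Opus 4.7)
The plan is to build a decision procedure on top of the characterization in Lemma \ref{discretetrviality}: $[\mapt \phi]$ is trivial in $\mathcal M(T)$ if and only if there is a locally constant $b: C \to \Z$ satisfying the alignment condition (a) and the cocycle condition (b) there. So the procedure has two stages: a cocycle test followed by an alignment test.

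For the cocycle stage, I would first verify that $\mapt \phi$ is a well-defined flow equivalence (decidable from the word code via the appendix) and extract from the data the locally constant functions $|W_0|,|W'_0|:C\to\N$. Condition (b) amounts to the assertion that $\psi := |W'_0| - |W_0|$ is a coboundary on $(C,\rho_C)$. The return system $(C,\rho_C)$ is itself a subshift whose language is computable from that of $(X,T)$, hence decidable; moreover, the identification $C(X,\Z)/(I-T)C(X,\Z) \cong \check H^1(\mapt X) \cong C(C,\Z)/(I-\rho_C)C(C,\Z)$ reduces the $\Z$-cocycle triviality problem on $(C,\rho_C)$ to that on $(X,T)$, concretely via the lift sending $\psi$ to $\widetilde\psi: X\to\Z$ supported on $C$ ($\widetilde\psi$ is a coboundary on $(X,T)$ iff $\psi$ is on $(C,\rho_C)$). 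Decide the coboundary question by hypothesis. If $\psi$ is not a coboundary, return ``not trivial''; otherwise, produce a transfer function $b_0$ by enumeration of locally constant $\Z$-valued block codes on $C$ of increasing radius, which terminates. By transitivity of $(C,\rho_C)$ (inherited from $(X,T)$), every other solution to (b) has the form $b_0+c$ for a constant $c\in\Z$.

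The alignment stage then seeks a $c$ making condition (a) hold. The step making this a finite search is a computable a priori bound $|c|\le M_0$: if $[\mapt\phi]=1$ then $\mapt\phi(y)=\alpha_{\beta(y)}(y)$ for a continuous $\beta$ on the compact mapping torus $\mapt X$, so $\beta$ is uniformly bounded by some $M$ extractable from the word code (for example, from the maximum total length of return words in a window of the code's radius). This forces $|b(x)|\le M$ and hence $|c|\le M + \max_{x\in C}|b_0(x)|$. For each candidate $c$ in the resulting finite range, condition (a) becomes the equality $W'_0(x) = x_{b_0(x)+c}\dots x_{b_0(x)+c+|W'_0(x)|-1}$ of two locally constant, block-code-defined functions on $C$, decidable by checking the rules agree on all allowed words of the relevant length (using the decidable language of $(X,T)$). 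Return ``trivial'' iff some candidate $c$ passes.

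The main obstacle I anticipate is establishing the effective bound on $|c|$, since it ties the continuous-time displacement $\beta$ to discrete combinatorics of the word code; the bound is clear in principle but requires careful bookkeeping against the flow-code formalism of the appendix. The cocycle transfer between $(X,T)$ and $(C,\rho_C)$ is routine but must be kept effective throughout so that a concrete $b_0$ is produced, not merely shown to exist.
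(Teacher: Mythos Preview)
Your plan follows the paper's strategy closely: reduce to Lemma~\ref{discretetrviality}, decide the cocycle equation (b) for $|W'_0|-|W_0|$, then test the alignment condition (a). Where you diverge is in adding a search over the additive constant $c$: after producing one transfer function $b_0$, you check (a) for each $b_0+c$ in a finite range. The paper instead asserts that ``either every solution to (b) also satisfies (a), or no solution does'' and tests only the single $b$ it found. That assertion is not correct as stated: take $(X,T)$ the full $2$-shift, $C=X$, and $\phi=T$ (a one-block word code with $W'_0(x)=x_1$); then solutions to (b) are exactly the constant functions, but only $b\equiv 1$ satisfies (a), while $b\equiv 0$ does not. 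So your extra search over $c$ is the natural repair, and at this point you are in fact more careful than the paper.

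The place your plan is still incomplete is exactly where you flag it: producing a \emph{computable} bound on $|c|$. Knowing that the true $b$ (if it exists) is bounded on the compact set $C$ is not enough; you need an effective bound extractable from the explicit word code. Your suggestion (the total return-word length in a radius-$N$ window) is the right heuristic for irreducible SFTs, where a mild mixing argument shows that if $(\phi x)_{[0,|W'_0|-1]}$ is determined by $x_{[-R,R]}$ yet equals $x_{[b(x),\,b(x)+|W'_0|-1]}$, then $|b(x)|$ cannot exceed $R$ by more than a computable constant. For a general transitive subshift with only decidable language and solvable $\Z$-cocycle triviality, that step requires an additional hypothesis or idea; without it the procedure is not yet an algorithm. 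Your transfer of the cocycle problem from $(C,\rho_C)$ to $(X,T)$ via the lift supported on $C$ is correct and effective, matching the paper's use of the solvable $\Z$-cocycle triviality assumption.
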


\begin{proof}
  We will decide whether there is a function $b \in C(X, \Z) $
  satisfying the conditions (a),(b) of
  Lemma \ref{discretetrviality}. We are explicitly given
  the locally constant return time functions
  $\tau_C(x) = |W_0(x)|$ and $\tau_D(\phi x) = |W'_0(\phi x)|$ .
  Because there is a dense $T$ orbit, a solution $b$ to
  (b) is unique up to an additive constant. Thus,
either
  every solution to (b) also satisfies (a), or no
  solution to (b) also satisfies (a).

  By the $\Z$-cocyle triviality and solvable word
  problem assumptions,
  there is an algorithm
  which produces  $b\in C(X,Z)$ such that
  $|W'_0(x)| - |W_0(x)| = b (T^{|W_0(x)|}(x) ) -b(x)  $,
  if such a $b$ exists. So,
  suppose  we have $b\in C(X,\Z)$ satisfying condition
  (b) of Lemma  \ref{discretetrviality}.
  From the explicitly given $C$ and word code $\phi$,
  we can compute a positive integer $M$ such that
  for all $x$, the word $x_{-M} \dots x_M$ determines
  $\mathcal W_{-N}^N(x)$ (and thus $W'_0(x)$),
  and also $M > \max |b| + \max |W'_0|$. Now whether
  (a) holds can be detected by testing each word
  $x_{-M}\dots x_M$ occuring in $X$. By the assumed decidability
  of the language, there is an algorithm
  to list these words.
  \end{proof}

\begin{theorem} \label{mcgdecide}
  Suppose $(X,T) $ is a transitive  subshift
 with
decidable language and  solvable $\Z$-cocycle
triviality problem   (for example, any irreducible SFT).  Then
 $\mathcal M(T)$ has solvable word problem.
   \end{theorem}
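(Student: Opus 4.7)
The plan is to emulate the proof of Proposition \ref{kitch} for block codes, replacing block codes by flow codes and the verification ``is this block code the identity'' by the algorithm provided in Lemma \ref{mcgdecidelemma}. So, given a finite subset $E = \{[F_1], \dots, [F_m]\} \subset \mathcal M(T)$, we first fix representatives $F_i = \mapt{\phi_i}$ where each $\phi_i \colon C_i \to D_i$ is a flow code presented by an explicit discrete cross section $C_i$ and an explicit word block code $(\Phi_i, C_i)$ as in Definition/Appendix. Given an input word $g = [F_{j_k}] \cdots [F_{j_1}]$, the two tasks are: (i) produce an explicit flow code presenting the composition $\mapt{\phi_{j_k}} \circ \cdots \circ \mapt{\phi_{j_1}}$ up to isotopy, and (ii) decide whether the resulting class is trivial in $\mathcal M(T)$.

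For task (i), I would invoke the algorithmic composition of flow codes developed in Appendix \ref{sec:flowcodes}. Concretely, given two flow codes $\phi \colon C \to D$ and $\psi \colon C' \to D'$, one refines the cross sections to a common discrete cross section $\widetilde C$ fine enough that both $\mapt\phi$ and $\mapt\psi$ are induced (up to isotopy) by word codes defined on $\widetilde C$, and then composes these word codes in the usual automaton fashion, as is done for block codes in the proof of Proposition \ref{kitch}. Iterating this pairwise across the product $\phi_{j_k}, \ldots, \phi_{j_1}$ produces an explicit word code $(\Phi, C)$ whose associated flow equivalence $\mapt\phi$ is isotopic to $\mapt{\phi_{j_k}} \circ \cdots \circ \mapt{\phi_{j_1}}$.

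For task (ii), we apply Lemma \ref{mcgdecidelemma} directly: the hypotheses of $(X,T)$ (decidable language and solvable $\Z$-cocycle triviality problem) are exactly what that lemma requires, and it delivers a procedure which, from the explicit word code $(\Phi, C)$, decides whether $[\mapt\phi]$ is trivial in $\mathcal M(T)$. Combining (i) and (ii) yields the required decision algorithm for the word problem on the finite generating set $E$.

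The main obstacle is the composition step (i): showing that flow codes can be composed \emph{algorithmically} up to isotopy, with explicit word codes on a common refined cross section. Unlike block codes, where composition is a clean finite-state computation on a fixed-radius window, flow codes mix a combinatorial word-block code with a choice of discrete cross section and an isotopy class, so producing a common refinement of $C$ and $\psi^{-1}(C)$ (and tracking how the word codes pull back) requires the technical machinery of the appendix. Once that machinery is in place and certified to be effective, the rest of the proof is a straightforward concatenation of Lemma \ref{mcgdecidelemma} with the assumption that $(X,T)$ has decidable language and solvable $\Z$-cocycle triviality problem.
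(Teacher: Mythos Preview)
Your proposal is correct and follows essentially the same approach as the paper: represent each generator by an explicit flow code, use the algorithmic composition result from Appendix \ref{sec:flowcodes} (Proposition \ref{propcomp}) to produce an explicit flow code for the product, and then apply Lemma \ref{mcgdecidelemma} to decide triviality. Your description of the composition step as ``refining to a common cross section'' is slightly loose --- the paper's mechanism is the discrete Parry--Sullivan construction, producing $K\subset D_1$ and $L\subset C_2$ joined by an isotopically trivial $\delta$, rather than a single common refinement --- but you correctly defer this to the appendix machinery and identify it as the main technical point, so the argument goes through.
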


\begin{proof}
Given  $[F_1], \dots , [F_k]$ in
 $\mathcal M(T)$,   
for $1\leq i \leq k$ let
$\Phi_i$ be an explicitly given word code defining
a homeomorphism of discrete cross sections, 
$\phi_i :C_i \to D_i$, with $\mapt \phi_i$ isotopic to $F_i$.
Suppose $[F]= [F_{i_j}][F_{i_{j-1}}]\cdots [F_{i_1}] $ in
$\mathcal M (T)$.

By Proposition \ref{propcomp}, there is an algorithm
which computes a
rule
$\Phi $,  defining a homeomorphism $\phi: C \to D$ of
explicitly given cross sections of $(X,T)$,
such that $[\mapt \phi]=[F]$.
By Lemma \ref{mcgdecidelemma},
there is then a procedure which decides whether
    $[\mapt \phi]$
is trivial in $\mathcal M(T)$.
\end{proof}

\section{Conjugacy classes of involutions}
\label{Conjugacyclassesofinvolutions}

 Throughout this section, $A$ is a matrix defining a nontrivial
  irreducible SFT.
We will prove and exploit Theorem \ref{conjugacyclass},
which shows how conjugacy classes of
many involutions
in $\mathcal M_A$ are classified as $G$-flow equivalence classes of
mixing $G$-SFTs, for $G= \Z_2 :=\Z /2\Z$.

We prepare for the statement of
Theorem \ref{conjugacyclass}
with some definitions and background.
In this paper, by a $G$-SFT we mean
 a shift of finite type  together with a continuous
(not necessarily free) action of a finite group $G$
 by homeomorphisms
which commute  with the shift.
 A $G$-SFT is mixing (irreducible) if it is
mixing (irreducible) as an SFT.
A continuous $G$ action on an SFT $X_A$ lifts to a
continuous $G$
action on its mapping torus $\mapt X_A$.
Two $G$-SFTs are $G$-flow equivalent if there
is an orientation preserving homeomorphism between their mapping tori
which intertwines the induced $G$ actions.

 Recall, if $C$ is a cross section for a
 flow equivalence $F: \mapt X_A \to \mapt X_A$,
 and $\rho_C: C \to C$ is the return map to $C$
 under the flow, then $\rho_C$ is flow equivalent to
 $\sigma_A$ and in particular is a nontrivial irreducible SFT.
 If $C$ is also invariant under an involution $V$ in $\mathcal F_A$,
 then the pair $T=(\rho_C, V|_C)$ is a $\Z_2$-SFT; we say
 this $\Z_2$-SFT is {\it associated}
 to $V$, and to $\mapt X_A$.

\begin{theorem} \label{conjugacyclass}
%let $\mathcal I_C$ be the set of involutions in $\mathcal M_A$
     %  which contain an involution of $\mathcal F_A$.
     For $i=1,2$:
     suppose  $V_i$  in $\mathcal F_A$ is an involution.
     Then $V_i$ has an invariant cross section.
     Let  $C_i$ be any invariant cross section for $V_i$,
     with $T_i$  the associated $\Z_2$-SFT. Then the following are
     equivalent.
     \begin{enumerate}
       \item   $[V_1]$ and $[V_2]$ are conjugate elements in
  $\mathcal M_A$.
       \item
  The $\Z_2$-SFTs $T_1$ and $T_2$ are
  $\Z_2$-flow equivalent.
\end{enumerate}
\end{theorem}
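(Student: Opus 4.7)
The plan separates into the existence of an invariant cross section and the equivalence $(1)\Leftrightarrow(2)$.

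For existence, given any zero-dimensional cross section $C_0\subset\mapt X_A$, the image $V(C_0)$ is also a cross section, but the union $C_0\cup V(C_0)$ need not itself be one, because return times to it may fail to be bounded below. I would perturb $C_0$ slightly along flow lines to obtain a cross section $C_0'$ for which $C_0'$ and $V(C_0')$ are separated by a uniform positive flow distance; then $C:=C_0'\cup V(C_0')$ is a $V$-invariant zero-dimensional cross section.

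For $(2)\Rightarrow(1)$: the inclusion $C_i\hookrightarrow\mapt X_A$ induces a canonical flow equivalence $\mapt{\rho_{C_i}}\to\mapt X_A$ under which the $\Z_2$-action on $\mapt{\rho_{C_i}}$ extending $V_i|_{C_i}$ transports (up to isotopy) to the $V_i$-action on $\mapt X_A$. Hence a $\Z_2$-flow equivalence $T_1\to T_2$ produces a flow equivalence $h\colon\mapt X_A\to\mapt X_A$ with $hV_1h^{-1}$ isotopic to $V_2$, giving $[h][V_1][h]^{-1}=[V_2]$ in $\mathcal{M}_A$.

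For $(1)\Rightarrow(2)$: assume $FV_1F^{-1}$ is isotopic to $V_2$. Then $F(C_1)$ is an invariant cross section for the involution $V:=FV_1F^{-1}$, and the restriction of $F$ is a topological conjugacy of $\Z_2$-SFTs $(\rho_{C_1},V_1|_{C_1})\to(\rho_{F(C_1)},V|_{F(C_1)})$, yielding a $\Z_2$-flow equivalence $T_1\to(\rho_{F(C_1)},V|_{F(C_1)})$. The theorem thus reduces to the following key lemma: isotopic involutions in $\mathcal{F}_A$ yield $\Z_2$-flow-equivalent $\Z_2$-SFTs, for any choices of invariant cross section. My strategy for the lemma is to construct a flow equivalence $G$ of $\mapt X_A$, isotopic to the identity, with $GVG^{-1}=V_2$; transporting a $V$-invariant cross section by $G$ then supplies the required $\Z_2$-flow equivalence.

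Producing $G$ is the main obstacle. By Theorem \ref{theorem:231}, the isotopy hypothesis forces $VV_2^{-1}$ to fix every flow orbit and to act as an orientation-preserving self-homeomorphism on each, but extracting a single global conjugator from this orbit-by-orbit data is not routine. My plan is a $\Z_2$-equivariant version of the Parry-Sullivan argument (Theorem \ref{pstheorem}): pass to a discrete cross section invariant under both $V$ and $V_2$, on which the two involutions are related by a $\Z_2$-equivariant conjugacy (block code) of return maps; suspending this conjugacy yields $G$. Making the equivariant matching precise will likely require the flow-code machinery of Appendix \ref{sec:flowcodes} in equivariant form together with a cohomological argument producing a transfer function compatible with the $\Z_2$-action.
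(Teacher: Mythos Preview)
Your reduction is exactly right: both directions come down to the key lemma that two isotopic involutions $V,V_2\in\mathcal F_A$ are genuinely conjugate by some $G$ isotopic to the identity. The paper's Lemma \ref{conjugacy of involutions is conjugacy} is precisely this, and the paper's short proof of the theorem simply invokes it (together with Lemma \ref{icforinvolution} for the invariant cross section), leaving $(2)\Rightarrow(1)$ implicit much as you do.

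Where your proposal falls short is in the two constructions.

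\textbf{Invariant cross section.} Your perturbation idea --- move $C_0$ so that $C_0'$ and $V(C_0')$ are uniformly flow-separated --- does not obviously work when $V$ has fixed points, or more generally when the displacement of $V$ along orbits accumulates to zero; near such points no small perturbation of $C_0$ will separate $C_0'$ from $V(C_0')$. The paper's Lemma \ref{icforinvolution} handles this by working on a small neighborhood $D$ of $X_A\cap V(X_A)$, writing $V$ there as $[x,t]\mapsto[h(x),t+\gamma([x,t])]$, observing that $h(x)=x$ forces $\gamma(x)=0$ (orientation preservation), and then replacing $D$ by the graph $\{[x,\max(\gamma(x),\gamma(h(x)))]:x\in D\}$, which is $V$-invariant by construction and meets each short orbit segment exactly once.

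\textbf{Constructing $G$.} This is the heart of the matter, and ``equivariant Parry--Sullivan plus a cohomological argument'' is not a plan: you would need a common invariant cross section for both $V$ and $V_2$, and none is in evidence. The paper avoids this entirely with a direct trick. First it normalizes: using the invariant cross section for $V_2$, replace $V_2$ by an involution $V$ (equal to $V_2$ on the cross section, so same associated $\Z_2$-SFT) that \emph{commutes with the normalized suspension flow} $\alpha$. Now $F^{-1}V_1F=VH$ with $H(y)=\alpha_{\beta(y)}(y)$ for some continuous $\beta$. Since $VH$ is an involution and $V$ commutes with $\alpha$, one gets $V=HVH$, and a short computation shows $\beta(Vx)=-\beta(x)$ on aperiodic (hence all) points. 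Split $\beta=b+c$ into its nonnegative and nonpositive parts, giving $H=H_+H_-$; the sign relation yields $H_-VH_+=V$. Then $G:=H_-$ satisfies
\[
G(VH)G^{-1}=H_-(VH_+H_-)H_-^{-1}=H_-VH_+=V,
\]
and $J:=FG^{-1}$ conjugates $V_1$ to $V$ on the nose. The normalization step (making $V$ commute with $\alpha$) is what makes the positive/negative-part decomposition interact cleanly with $V$; your outline is missing both this normalization and the decomposition idea.
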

\begin{proof}
  The involutions $V_1, V_2$ have invariant cross sections
  by Lemma \ref{icforinvolution}. By Lemma
  \ref{conjugacy of involutions is conjugacy}, there is
  an involution $V$ in $\mathcal F_A$
  which equals $V_2$ on $C_2$ (and therefore
  defines the same associated $\Z_2$-SFT), such that there is a
  flow equivalence $J$ such that $J^{-1}V_1J=V$. This shows
  the two $\Z_2$-SFTs are $\Z_2$-flow equivalent.
\end{proof}

If $V$ is an  involution  in $\mathcal F_A$, 
then the fixed point set of its restriction to an invariant cross section $C$ 
will, as  a subsystem of $(C, \rho_C)$, be an SFT.  
Theorem \ref{conjugacyclass} shows that the flow equivalence class
of this SFT  is an invariant of the conjugacy class of $[V]$
in $\mathcal M_A$, even though there can
be other elements $W$ in $[V]$ (but not other
involutions) with fixed point set containing a submapping torus
whose intersection with $C$ properly contains 
$C\cap \text{Fix}(V)$ and represents a different
flow equivalence class. 

\begin{question} \label{involutions}
  Suppose $[F]$ is an involution in $\mathcal M_A$.
  Is there an involution $V$ such that $[F]=[V]$?
\end{question}

If the answer to Question \ref{involutions} is yes,
then Theorem \ref{conjugacyclass} applies to all order
two elements of the mapping class group; if the answer
is no, then the quotient map $\mathcal F_A \to \mathcal M_A$
does not split.

  Below, for visual simplicity, where a point $x$ in $X_A$
 denotes a point in $\mapt X_A$,
 it denotes $[x,0]$. We similarly abuse notation for sets.
 
\begin{lemma} \label{icforinvolution}
  Suppose $V \in \mathcal F_A$ is an involution.
  Then
  $V$ has an invariant cross section.
\end{lemma}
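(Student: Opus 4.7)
The plan is to combine Theorem \ref{pstheorem} with the involution identity $V^2=\mathrm{id}$ to extract a $V$-invariant cross section essentially for free, and then work to confirm that the resulting set is a genuine cross section.

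First I would apply Theorem \ref{pstheorem} to the flow equivalence $V\colon \mapt X_A \to \mapt X_A$, taking $Y=Y'=\mapt X_A$ and the standard cross section $\widetilde X_A$ (whose return map is $\sigma_A$) on both sides. This produces discrete cross sections $D, D' \subset X_A$ for $\sigma_A$ and a flow equivalence $F\in\mathcal F_A$ isotopic to the identity such that $V(F(D))=D'$. Applying $V$ to both sides of this equality and invoking $V^2=\mathrm{id}$ yields $F(D)=V(D')$, so $V$ interchanges the two sets $F(D)$ and $D'$. Consequently the union
\[
E \ := \ F(D) \cup D'
\]
is automatically $V$-invariant: $V(E)=V(F(D))\cup V(D') = D'\cup F(D) = E$.

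It then remains to verify that $E$ is a genuine cross section of the suspension flow. It is closed and meets every orbit (as $D'$ does), and each of its two pieces is itself a cross section; by Theorem \ref{theorem:231}, the identity-isotopic map $F$ preserves every orbit setwise, so $F(D)$ lies on the same collection of orbits as $D$. The delicate point is that the first-return function to $E$ must be continuous and bounded below by a positive constant. To arrange this, I would, before invoking Parry--Sullivan, pass to a sufficiently refined higher-block symbolic presentation of $\sigma_A$ and choose $D$ so that the uniform return time to $D$ is much larger than the maximal along-orbit displacement of $F$ (which is finite by compactness of $\mapt X_A$ and continuity of $F$); this should prevent $F(D)$ from being pushed into arbitrarily close flow-proximity with the discrete cross section $D'$ that comes out of the construction.

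The hard part will be precisely this cross-section verification. Because $F$ is only isotopic to the identity, not equal to it, the images $F(D)$ and $D'$ could a priori accumulate on certain orbits and so destroy the positive-return-time condition. Making quantitative the estimate that the return-time scale of $D$ dominates the along-orbit displacement of $F$ (and hence that $F(D)\cup D'$ is uniformly discrete on every orbit) is the crux of the argument. Once this quantitative control is in hand, the involution identity $V^2=\mathrm{id}$ supplies the $V$-invariance of $E$ with no additional work, and $E$ provides the desired invariant cross section.
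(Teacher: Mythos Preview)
Your approach has a genuine gap, and the gap is precisely where the content of the lemma lies. Observe first that the Parry--Sullivan step is doing no real work toward invariance: since $F(D)=V(D')$, your candidate $E=F(D)\cup D'$ equals $D'\cup V(D')$, and $D'\cup V(D')$ is $V$-invariant for \emph{any} cross section $D'$ whatsoever. So the entire problem is to exhibit some cross section $D'$ for which the union $D'\cup V(D')$ is again a cross section --- and that is exactly the difficulty your sketch defers.

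The obstruction is not with points that are far apart; it is with points $p\in D'$ at which $V(p)=\alpha_t(p)$ for some small $|t|$. If $t=0$ the point lies in $D'\cap V(D')$ and causes no harm, but if $t$ is small and nonzero then $p\in D'$ and $\alpha_t(p)\in V(D')$ are distinct points of $E$ at flow-distance $|t|$, and as $t\to 0$ along a sequence the first-return time to $E$ collapses to zero --- so $E$ fails to be a cross section. Making the $\rho_D$-return time large does not prevent this: $D$, $D'$ and $F$ are all \emph{outputs} of the Parry--Sullivan construction, so choosing $D$ in terms of $F$ is circular; and in any case the relevant quantity is the along-orbit displacement of $V$ at points of $D'$, not the displacement of $F$ on $D$. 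Nor can you simply demand $D'\cap V(D')=\emptyset$: if $V$ fixes an entire flow orbit (which it may), every cross section must meet $\text{Fix}(V)$.

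The paper's proof confronts this overlap region head-on. Working near the base section $X_A$, it writes $V$ locally as $[x,0]\mapsto[h(x),\gamma(x)]$ with $h$ an involution on a suitable clopen set $D$, and uses $V^2=\mathrm{id}$ to show that $\gamma(x)$ and $\gamma(h(x))$ are either both zero or nonzero of opposite sign. This sign dichotomy is the missing idea: it allows one to replace the na\"{\i}ve union $X_A\cup V(X_A)$ near the overlap by the graph $K=\{[x,\kappa(x)]:x\in D\}$ of the continuous function $\kappa(x)=\max\{0,\gamma(h(x))\}$, which meets each short orbit segment exactly once and is $V$-invariant by construction. This explicit local surgery is what your plan lacks.
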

\begin{proof}
      Suppose  $X_A\cap V(X_A) $ is nonempty
  (if it is empty,
  then $X_A\cup V(X_A)$   is an invariant cross section for $V$).
Fix $\epsilon >0 $  small enough that the image under  $V$
    of any orbit interval of length $2\epsilon$ has length less than
    1.
%    Fix $\delta >0$ small enough that the image under  $V$
%    of an orbit interval of length $2\delta $ has length less than
    %    $2\epsilon$.
For 
     a clopen subset $C$ of $X_A$ 
     containing  $X_A\cap V(X_A)$, with 
     $V(C) \subset X_A \times (-\epsilon, \epsilon )$,
     define $C'$ to be the clopen-in-$X_A$ set of points $x'$ such
     that for some $t$ in $(-\epsilon , \epsilon )$ and
     some $x$ in $C$, $V(x) = [x',t]$. Fix $C$ small enough that we
     also have 
     $V(C') \subset X_A \times (-\epsilon, \epsilon )$, and set 
    $D= C\cup C'$. Now there is a continuous 
 involution
    $h: D \to D$ with $h(C)=C'$, and a continuous 
  function 
  $\gamma :  D\times (-\epsilon, \epsilon)\to \R$, such that
    for all  $[x,t]$ in $D\times (-\epsilon, \epsilon)$,
    \[
    V: [x,t] \to [h(x), t+\gamma ([x,t])]\ . 
    \] 
For every $x$ in $D$, $V$ maps the interval
$\{ [x,t]: - \epsilon < t < \epsilon  \}$ by an orientation
preserving homeomorphism to  some orbit interval of length
less than 1. In particular, if $h(x)=x$, then
$\gamma (x)=0$ (otherwise, $V$ would map the orbit
segment between $x$ and $Vx$ onto itself reversing
endpoints, and thus reversing orientation).
Define 
\begin{align*}
  K& = \{  x\in D:  \gamma (x) \geq 0 \}
  \cup
  \{ [h(x),\gamma (h(x))]: x\in D, \gamma (x) \geq 0\}\ ,
  \\
  L&=  X_A \setminus D \ , \\
  E&=K \cup L \cup VL \ . 
\end{align*}
We will show $E$ is an invariant
cross section for $V$. Invariance is clear, since
for $x$ in $D$, we have 
$V(x)=  [h(x), \gamma (h(x))]$.
  
  Suppose $x\in D$. 
  Let $K(x) = K\cap (\{x\} \times (-\epsilon, \epsilon ))$; 
  then $K = \cup_{x\in D} K(x)$. Let $y=h(x)$.
We have $K(x) \subset \{ x, [x, \gamma (y)]\}$.  
Either both  $\gamma (y)$ and $\gamma (x)$ are zero,
or they are nonzero with opposite sign. Thus
\begin{align*}
  K(x) &= \{ x, [x,\gamma(y)]\} =
    \{ x \} \ , \quad \text{if } \gamma (x) = 0 \ , \\
    &= \{ x  \}  \ ,
    \quad  \qquad \qquad \quad \quad \  \ 
    \text{if } \gamma (x) > 0 \ , \\
    &= \{ [x, \gamma (y)]  \}  \ ,
      \quad  \qquad \ \ \ \ \ \ 
      \text{if } \gamma (x) <  0 \ .
\end{align*} 
For $x$ in $D$, define 
$\kappa (x)= \max \{ \gamma (x), \gamma (h(x))\}$.
It follows that 
$K  = \{ [x, \kappa (x)]: x\in D \} $, the graph of a continuous
function on $D$. The sets
$K,L,VL$ are disjoint. It is now straightforward to verify
that  $E$ is closed,
$E$ intersects every flow orbit and the return time
function on $E$ is continuous. Thus $E$ is a cross section. 

%
%
%  Keeping in mind $K(x) \subset \{ x, [x, \gamma (y)]\}$, 
%  we will check by cases that
%  $K(x)  = \{ [x, \kappa (x)]\} $:
%  \begin{enumerate}
%    \item  If $\gamma (x)=0$, then
%$y = V(x)$ and $\gamma (y)=0$,
%so $[x, \gamma (y] = [x,0]$ and
%$K(x)  =\{ [x, 0]\}=
%\{ [x, \kappa (x)]\}$. 
%
%\item If $\gamma (x)> 0$, then $\gamma (Vx) <0$, so
%  $\gamma (y) < 0$. Therefore $K(x)
%  \subset \{ x, [y, \gamma (y)]\}$. Because
%  $\gamma (x) \neq 0$ implies $x\neq y$, we have 
%$K(x)  = \{ [x, \kappa (x)]\}$.
%
%\item If $\gamma (x) < 0$, then $\gamma (Vx) >0$,
%  so $\gamma (y) >0$.
%Therefore $K(x)
%\subset \{ y, [h(y), \gamma (y)]\}$. Because
%$\gamma (y) \neq 0$ implies $x\neq y$, and $h(y)=x$,
%we have 
%  $K(x)  = \{ x, \gamma (y)]\}$, which equals  
%$  \{ [x, \kappa (x)]\}$.
%  \end{enumerate}
%  
%Because $D \cup L$ is a cross section, clearly
%every flow orbit intersects $E$. 
%Because $D$ is clopen in $X$ and $\kappa$ is
%continuous, $K$ is closed and
%the return time to $D$ is continuous. The
%return times to the disjoint sets $K$, $L$ and $VL$
%are  continuous. Therefore the return time to $E$
%is continuous, and $E$ is a cross section. 
\end{proof}

Below, by the
normalized suspension flow over a cross section $C$,
we mean
the suspension flow after a time change such that points
move at unit speed  and points in $C$
have return time 1. This can be achieved by a flow equivalence
from the mapping torus of  the return map $\rho_C$.

\begin{lemma} \label{conjugacy of involutions is conjugacy}
  Suppose $U$ and $W$ are involutions  $\mapt X_A \to \mapt X_A$,
  with $[U]$ and $[W]$ conjugate in $\mathcal M_A$.
 Then $[W]$ contains
 an involution $V$ such that the following hold.
 \begin{enumerate}
   \item
     There is an invariant cross section $C$ for $W$
and for $V$ such that $V=W$ on  $C$.
\item
$V$  is an isomorphism of a normalized
  suspension
  flow over an invariant cross section.
\item
  There
  is a flow equivalence $J: \mapt X_A \to \mapt X_A$ such that
  $J^{-1}UJ = V$.
  \end{enumerate}
\end{lemma}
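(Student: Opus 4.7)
The plan is to split the proof into two stages: first construct $V$ directly as a normalized suspension (handling (1) and (2)), and then use the conjugacy hypothesis together with an equivariant Parry-Sullivan argument to produce $J$ satisfying (3).

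\textbf{Stage 1: constructing $V$.} I would apply Lemma~\ref{icforinvolution} to obtain an invariant cross section $C$ for $W$. Since $W\in\mathcal F_A$ preserves $C$ and the flow, the restriction $W|_C$ commutes with the return map $\rho_C$ and is an involution in $\text{Aut}(\rho_C)$. Define $V\colon\mapt X_A\to\mapt X_A$ as the suspension of $W|_C$ in the normalized flow over $C$: $V[c,s]=[W(c),s]$ for $c\in C$ and $0\leq s<1$ in normalized coordinates, using the standard flow equivalence from $\mapt\rho_C$ to $\mapt X_A$. Then by construction $V$ is an involution preserving $C$, $V|_C=W|_C$, and $V$ is an isomorphism of the normalized flow over $C$, giving (2). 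Because $V$ and $W$ are flow equivalences sharing $C$ as an invariant cross section and agreeing on it, they are isotopic (by the standard criterion that flow equivalences are determined up to isotopy by their action on a common invariant cross section), so $V\in[W]$ and (1) holds.

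\textbf{Stage 2: producing $J$.} By hypothesis there is some $H\in\mathcal F_A$ with $H^{-1}UH$ isotopic to $W$, hence to $V$. Write $U_1=H^{-1}UH$; since $J=HK$ works whenever $K^{-1}U_1K=V$, it suffices to produce $K\in\mathcal F_A$ conjugating $U_1$ to $V$. The idea is to upgrade the isotopy between these two involutions to an actual conjugacy via a $\Z_2$-equivariant version of Theorem~\ref{pstheorem}. Let $C_1$ be an invariant cross section for $U_1$; the pairs $(C_1,\rho_{C_1},U_1|_{C_1})$ and $(C,\rho_C,V|_C)$ are $\Z_2$-SFTs, and the isotopy $U_1\sim V$ should translate into a $\Z_2$-flow-equivalence of their mapping tori. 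Passing to suitable discrete $\Z_2$-invariant sub-cross sections, a $\Z_2$-equivariant refinement of Parry-Sullivan would yield a $\Z_2$-equivariant topological conjugacy between sub-cross sections $D\subset C$ and $D_1\subset C_1$; extending this conjugacy to a flow equivalence $K_0\colon\mapt X_A\to\mapt X_A$ gives $K_0^{-1}U_1K_0$ agreeing with $V$ on $D$. A final reparametrization of $K_0$ along flow segments between consecutive returns to $D$ would absorb the remaining isotopy and yield $K$ with $K^{-1}U_1K=V$ exactly.

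\textbf{Expected main obstacle.} The crux is the $\Z_2$-equivariant refinement of Parry-Sullivan. The nonequivariant argument constructs clopen discrete cross sections and block codes combinatorially, and one must verify these choices can be made $\Z_2$-invariantly and $\Z_2$-equivariantly; this requires careful tracking of the involution through the construction in \cite{BCEfei}. A secondary technical point is the final reparametrization step, which rests on the fact that two isotopic flow equivalences agreeing on an invariant cross section differ only by a continuous family of flow-segment reparametrizations that can be absorbed into the conjugating map.
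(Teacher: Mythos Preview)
Your Stage 1 is fine and matches the paper: normalize $W$ along flow segments over an invariant cross section $C$ to get the suspension involution $V$, and note $V\in[W]$ because the normalizing reparametrization is isotopic to the identity.

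Stage 2, however, has a real gap. You reduce to finding $K$ with $K^{-1}U_1K=V$, where $U_1=H^{-1}UH$ and $V$ are isotopic involutions. You then propose a $\Z_2$-equivariant Parry--Sullivan argument. But Parry--Sullivan takes as \emph{input} a flow equivalence between two spaces and refines it on cross sections; here the input you would need is precisely a $\Z_2$-flow equivalence from $(\mapt X_A,U_1)$ to $(\mapt X_A,V)$, i.e., a $K$ intertwining the two involutions. That is exactly the output you are after, so the argument as stated is circular. The sentence ``the isotopy $U_1\sim V$ should translate into a $\Z_2$-flow equivalence'' is the whole problem, not a preliminary observation.

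The paper's route in Stage 2 is different and entirely elementary; it avoids any equivariant machinery. Write $F^{-1}UF=VH$ with $H:y\mapsto\alpha_{\beta(y)}(y)$ isotopic to the identity. Since $F^{-1}UF$ is an involution, $(VH)^2=\mathrm{Id}$, hence $HVH=V$. Because $V$ commutes with the flow (this is where the normalization from Stage 1 is used), the relation $HVH=V$ forces $\beta(Vx)=-\beta(x)$ on aperiodic points, hence everywhere. Now split $H=H_+H_-$ with $H_\pm:y\mapsto\alpha_{\beta_\pm(y)}(y)$, $\beta_+=\max(\beta,0)$, $\beta_-=\min(\beta,0)$. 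The sign antisymmetry of $\beta$ under $V$ yields $H_-VH_+=V$, and taking $G=H_-$ gives $G(VH)G^{-1}=H_-VH_+H_-H_-^{-1}=V$. Then $J=FG^{-1}$ satisfies $J^{-1}UJ=V$. This is the missing idea: decompose the isotopy defect $H$ into its positive and negative parts and use the involution constraint to cancel them against $V$.
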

\begin{proof}
  By Lemma \ref{icforinvolution}, $W$ has an invariant cross section; 
  without loss of generality, we  assume 
it is $X_A$.
  Given $x$ in $X$, let $\gamma_x : [0,1] \to [0,1]$ be the homeomorphism
  such that $W: [x,t] \mapsto [W(x), \gamma_x(t)]$, $0\leq t \leq 1$.
  Then define $B: \mapt X_A \to \mapt X_A $ to
  be  $[x,t]\mapsto [x,\gamma_x^{-1}(t)]$, $0\leq t \leq 1$,
  and set $V=UB$. $V$ is the required map.

  By assumption there is $F$ such that
  $[F]^{-1}[U][F]=[V]$, so
  $F^{-1}UF = VH$, where
  there is a continuous function $\beta : \mapt X_A \to \R$
  such that   $H$ is a flow equivalence
  defined by $H(x) = \alpha_{\beta (x)}(x)$
(for this see e.g. \cite[Theorem 3.1]{BCEfei}).
  It suffices to
  find a flow equivalence $G$ such that $G(VH)G^{-1}= V$.
  Define functions  $b = \max \{ \beta ,  0\} $ and
  $c = \min \{ \beta , 0\}$. Then
define  $H_+$ and $H_-$ on $\mapt X_A$ by
$    H_+(x) = \alpha_{b (x)} (x)$ and
$    H_-(x) = \alpha_{c(x)}(x) $.

  The set in an orbit on which $\beta$ is nonzero is a disjoint
  union of intervals;  on each,  $\beta $
  has constant sign, and on each, $H$ is a surjective
  self-homeomorphism respecting the flow orientation.
  Now by continuity of the functions $b$ and $c$,
  $H_+$ and $H_+$ are flow equivalences of $\mapt X_A$,
  isotopic to the identity. Clearly $H=H_+H_- = H_-H_+$.

  We have $V =HVH $, because
$  V = V(VHVH) = V^2(HVH)$.
  Next, we show that $H_-VH_+ = V$. For all $x$,
  \[
  VH(x) = V(\alpha_{\beta(x)}(x))  =
  \alpha_{\beta(x)}(Vx) := z \ .
  \]
  Then $H(VH(x))= \alpha_{\beta(z)} (z)$, and
  \begin{align*}
    x &= VH(VH(x)) = V ( \alpha_{\beta(z)} (z) )
    =  \alpha_{\beta(z)} (Vz)
    =  \alpha_{\beta(z)} V \alpha_{\beta(x)} (Vx) \\
      &  =  \alpha_{\beta(z)}  \alpha_{\beta(x)} (V^2x)
        =  \alpha_{\beta(z)+\beta(x)}  (x) \ .
  \end{align*}
  Thus $\beta(z)+\beta(x) =0$ on the dense set of aperiodic
  points, hence everywhere. Because the sign of $\beta (z)$
  is the same as the sign of $\beta $ on
  $H(z)  = HVH (x) = V(x)$, it follows that
  $\beta $ is nonzero at $x$ if and only if $\beta$  is nonzero
  with opposite sign   at $V(x)$. So, if $\beta (x)>0$,
  then $H_-VH_+ (x) = HVH (x) =V(x)$. If $\beta (y) < 0$,
  then $y=Vx$ with $\beta (Vx) >0$, so
  \[
  H_-VH_+ (y)= H_-VH_+ (Vx) = V(Vx) = x = V(y) \ .
  \]
  Finally, let $G= H_-$. Then
  \[
  G(VH)G^{-1} =   H_-(VH_+H_-)(H_-)^{-1} = H_-VH_+ =V \ .
  \]  \end{proof}

  We give more information now on the $G$-SFTs.

  A free $G$-SFT is a $G$-SFT for which the $G$-action is free.
  By a construction of Parry explained in \cite{BS}
  (also see \cite[Appendix A]{BoSc2}),
free $G$-SFTs can be presented by square matrices with entries in
$\Z_+G$, the set of elements $\sum_g n_g g$ in the integral group ring
$\Z G$ with every $n_g$ a nonnegative integer.
Let $\text{El}(n,\Z G)$ be the group of $n\times n$ elementary matrices
over the integral group ring $\Z G$.

\begin{theorem}\cite{BS}  \label{classifstatement}
  Suppose $A,B$ are square  matrices over $\Z_+G$,
  presenting  nontrivial irreducible free 
$G$-SFTs. 
Then the following are equivalent.
\begin{enumerate}
\item
  The  $G$-SFTs are $G$-flow equivalent.
\item
 $I-A$ and $I-B$ are stably elementary
  equivalent over $\Z G$, in the following sense:
  there exist integers $j,k,n$ and identity matrices
$I_j, I_k$ with sizes $j,k$
  such that there are matrices $U,V$ in
$\text{El}(n,\Z G)$
  such that $U((I-A)\oplus I_j)V = (I-B) \oplus I_k$.
\end{enumerate}
\end{theorem}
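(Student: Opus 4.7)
The plan is to parallel the Parry--Sullivan--Franks classification up to flow equivalence (recalled in Section~\ref{sec:definitions}), but carried out equivariantly. The key input is Parry's correspondence between free $G$-SFTs and finite directed graphs whose edges carry labels in $G$: the adjacency matrix of such a graph is naturally a matrix over $\Z_+G$, with $A_{ij}=\sum_g n_g g$ recording $n_g$ edges from vertex $i$ to vertex $j$ with label $g$. The group $G$ acts freely on the resulting edge-SFT by permuting edge labels, and every nontrivial irreducible free $G$-SFT arises this way. Thus the theorem amounts to showing that the complete Franks invariants (determinant and cokernel of $I-A$) lift equivariantly to the statement about stable elementary equivalence of $I-A$ over $\Z G$.

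For $(2)\Rightarrow(1)$, my strategy is to implement $G$-equivariant positive equivalence. A \emph{basic $G$-positive equivalence} corresponds to an elementary matrix $E$ over $\Z G$ with $E_{ij}=g$ ($i\neq j$) where $A_{ij}$ has $g$ as a summand; the move $(E,I)\colon I-A\to E(I-A)$ is realized by the equivariant analogue of the edge-cutting construction in Section~\ref{sec:definitions} (remove an edge labeled $g$ from $i$ to $j$, and for every edge out of $j$ with label $h$ add a new edge from $i$ with label $gh$). This is manifestly $G$-equivariant and produces a $G$-flow equivalence. The stabilizations $(I-A)\oplus I_j$ correspond to adjoining finitely many isolated free $G$-orbits to $\mapt X_A$ after a $G$-equivariant state-splitting creates such summands as flow-equivalent modifications, so they also produce $G$-flow equivalences. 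The remaining difficulty is to pass from arbitrary $U,V\in\text{El}(n,\Z G)$ to products of \emph{positive} elementary equivalences: here one invokes a Whitehead-style cancellation lemma, which says that after sufficient stabilization any product of elementary matrices can be written as a product in which each basic move is positive in the sense required by the construction. Combining these gives $(2)\Rightarrow(1)$.

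For $(1)\Rightarrow(2)$, the input is a $G$-equivariant flow equivalence $h\colon \mapt X_A\to \mapt X_B$. I would apply a $G$-equivariant version of the Parry--Sullivan argument (Theorem~\ref{pstheorem}): since the $G$-action on each $\mapt X$ permutes clopen subsets, one can choose $G$-invariant discrete cross sections $D\subset X_A$ and $D'\subset X_B$ such that $h$ is isotopic (through $G$-equivariant flow equivalences) to a map induced by a $G$-equivariant conjugacy $(D,\rho_D)\to(D',\rho_{D'})$. The return maps $\rho_D,\rho_{D'}$ are themselves free $G$-SFTs presented by $\Z_+G$-matrices obtained from $A,B$ by $G$-equivariant state splittings and amalgamations. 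Each elementary such splitting changes $I-A$ by an elementary equivalence over $\Z G$ (possibly after stabilizing by an identity summand to record new vertices), and a $G$-equivariant topological conjugacy $(D,\rho_D)\to(D',\rho_{D'})$ is, by the Williams equivalence theorem lifted to $\Z_+G$, a composition of such elementary state splittings. Tracking the effect on $I-A$ through the sequence of splittings and amalgamations yields the required stable elementary equivalence.

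The main obstacle I expect is the Whitehead-type cancellation argument in the $(2)\Rightarrow(1)$ direction: over the noncommutative ring $\Z G$ (when $G$ is nontrivial) not every elementary matrix can be obtained as a product of positive elementary matrices without enlarging the size, and the rewriting procedure that accomplishes this must be compatible with the positivity condition $A_{ij}\ni g$ required to execute each basic positive move. The appropriate lemma is that after stabilizing $I-A$ by an identity summand of sufficiently large size (equivalently, flow-equivalently adjoining finitely many free $G$-orbits), any $E\in \text{El}(n,\Z G)$ can be factored as a product of basic positive equivalences; this is the equivariant analogue of the key technical lemma behind Franks' theorem, and I would establish it by a direct combinatorial argument on matrix rewriting in the spirit of \cite{Bo1}, using the freedom supplied by the stabilization to absorb the inverses and negative entries that obstruct direct positive realization.
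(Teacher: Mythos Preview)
The paper does not prove this theorem. It is quoted from \cite{BS}; immediately after the statement, the authors only explain why the hypotheses of the more general \cite[Theorem 6.4]{BS} are met in this situation: when a $\Z_+G$ matrix $A$ presents a nontrivial irreducible $G$-SFT, $A$ is automatically essentially irreducible and its ``weights group'' equals all of $G$ (else the underlying SFT would not have a dense orbit; they refer to \cite[Prop.~D.7]{BCEgfe}). That is the entire argument in the paper.

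Your proposal is therefore not a comparison target: you are sketching a proof of the cited result itself rather than the reduction the paper gives. As a sketch of the Boyle--Sullivan argument your outline is broadly in the right spirit (equivariant positive equivalence for $(2)\Rightarrow(1)$, equivariant Parry--Sullivan and strong shift equivalence over $\Z_+G$ for $(1)\Rightarrow(2)$), but a few points are off. Stabilizing $I-A$ by an identity summand does not correspond to adjoining isolated free $G$-orbits to $\mapt X_A$; rather it is the trivial flow-equivalence move of adding a stranded vertex (a row and column of zeros in $A$), which is harmless precisely because it does not change the essentially irreducible core. Also, the hard step you flag --- rewriting an arbitrary elementary factorization over $\Z G$ as a product of basic \emph{positive} equivalences after stabilization --- is exactly the content of the positive-factorization results in \cite{Bo3,BS}, and your sketch does not supply it; saying ``a direct combinatorial argument on matrix rewriting'' is not enough, since this is where the genuine work of \cite{BS} lies. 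If your goal is to match the paper, the correct response is simply to cite \cite[Theorem 6.4]{BS} and verify the two hypotheses above.
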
 

The classification statement Theorem \ref{classifstatement}
is the main special case
of
\cite[Theorem 6.4]{BS},  given one additional
simplifying remark. When a $\Z_+G$ matrix $A$ defines
a nontrivial irreducible $G$-SFT $T$,
the matrix $A$
must be essentially irreducible (so,
\cite[Theorem 6.4]{BS}
applies) 
 and its ``weights group'' in
the statement of \cite[Theorem 6.4]{BS}
must be all of $G$.  (Otherwise, as an SFT $T$ could
not have a dense orbit; see \cite[Prop.D.7]{BCEgfe} for
detail.)

There is also a complete (more complicated)
classification of $G$-flow equivalence
for general  free $G$-SFTs, in  \cite{BCEgfe}.
In the nonfree case, significant invariants
are known, but the  classification problem is open.
Still, we will see  with the remainder of the
section that tools for $G$-SFTs are of some use for
learning about conjugacy classes in $\mathcal M_A$. 
Define
  \[
  \mathcal C_A = \{ [V]\in \mathcal M_A:
  V^2 = Id \text{ and } V
  \text{ is associated to  a free } \Z_2-SFT \}\ .
  \]
  It follows from
  Proposition \ref{conjugacy of involutions is conjugacy} that
  \[
  \mathcal C_A = \{ [V]\in \mathcal M_A:
   V 
  \text{ is a fixed point  free involution}\} \ . 
  \]
  We will say an $n\times n$ matrix $D$ over $\Z$ is a Smith normal form
  if $D$ is a
  diagonal matrix $\text{diag}(d_1 , d_2 , \dots  , d_n )$
  satisfying the following conditions:
  $d_{i+1}$ divides $d_i$ whenever $1 \leq i < n$ and $d_{i+1} \neq 0$;
  $d_{i+1} = 0$ implies $d_i =0$; and $d_i \geq 0$
  if $i > 1$. 
  It is well known that any $n \times n$ matrix $B$ over $\Z$ is
  $\text{SL}(n, \Z)$  equivalent (hence $\text{El}(n,\Z)$ equivalent) to
  a unique Smith normal form, which we denote $\text{Sm}(B)$.
  (Our \lq\lq Smith normal form\rq\rq
  is slightly unconventional,
 following \cite{BS},
  to address sign and achieve
  $\text{Sm}(B\oplus I_k) = \text{Sm}(B) \oplus I_k$.)
    Note, $\det (B) = \det ( \text{Sm}(B) )$.
  \begin{theorem} Suppose $\mathcal \sigma_A$ is a nontrivial
  irreducible SFT and 
 $\text{det}(I-A)$
  is an odd integer. Then $\mathcal C_A$ is the union of
  finitely many conjugacy classes in $\mathcal M_A$.  
\end{theorem}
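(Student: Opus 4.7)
My approach combines Theorem~\ref{conjugacyclass} with the classification of free $\Z_2$-SFTs (Theorem~\ref{classifstatement}), and then uses the odd-determinant hypothesis to force the relevant algebraic invariant into a finite set.

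By Lemma~\ref{icforinvolution} and Theorem~\ref{conjugacyclass}, $\mathcal M_A$-conjugacy classes in $\mathcal C_A$ correspond bijectively to $\Z_2$-flow equivalence classes of free $\Z_2$-SFTs whose underlying SFT is flow equivalent to $\sigma_A$ (the underlying SFT being $\rho_C$ for an invariant cross section $C$ of $\mapt X_A$, and every such free $\Z_2$-SFT arises this way by suspending and conjugating by a flow equivalence). By Parry's construction each such $\Z_2$-SFT is presented by a square matrix $B = B_0 + g B_1$ over $\Z_+[\Z_2]$ whose block-matrix total space $B_{\mathrm{tot}} = \bigl(\begin{smallmatrix} B_0 & B_1 \\ B_1 & B_0 \end{smallmatrix}\bigr)$ presents an SFT flow equivalent to $\sigma_A$, so by Parry-Sullivan invariance $|\det(I-B_{\mathrm{tot}})| = |\det(I-A)|$. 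By Theorem~\ref{classifstatement} it now suffices to bound the number of stable elementary equivalence classes of $I - B$ over $R := \Z[\Z_2]$.

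The crucial invariant is $M := \text{coker}(I-B)$ viewed as an $R$-module. As an abelian group $|M| = |\det(I-B_{\mathrm{tot}})| = |\det(I-A)|$, the fixed odd integer from the hypothesis. Oddness of $|M|$ makes $2$ invertible on $M$, so the idempotents $e_\pm = (1 \pm g)/2 \in R[1/2]$ yield a decomposition $M = M_+ \oplus M_-$ with $g$ acting as $\pm 1$ on $M_\pm$; by right-exactness of tensor product these summands identify as abelian groups with $\text{coker}(I - \epsilon(B))$ and $\text{coker}(I - \delta(B))$, carrying the trivial and sign $\Z_2$-actions respectively. Since $|M_+| \cdot |M_-| = |\det(I-A)|$, only finitely many isomorphism classes of the $R$-module $M$ can arise.

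For each such finite $R$-module $M$, the set of stable elementary equivalence classes of square matrices over $R$ with cokernel isomorphic to $M$ is a quotient of $K_1(R)$ by the image of automorphisms of $M$ lifted to $GL(R)$; since $K_1(R)$ is finite for $R = \Z[\Z_2]$---indeed via the Milnor-Mayer-Vietoris sequence for the pullback $R = \Z \times_{\F_2} \Z$ (under $(\epsilon, \delta)$) one computes $K_1(R) = R^\times = \{\pm 1, \pm g\}$---this set is finite. Combining the two finiteness statements yields the theorem. I expect the main obstacle to be making this final K-theoretic step clean; an alternative avoiding $K_1$ would be to apply Smith normal form over the localization $R[1/2] \cong \Z[1/2] \times \Z[1/2]$ (a product of PIDs, so that the cokernel completely classifies matrices up to stable elementary equivalence), and then lift the classification back to $R$ using the Milnor patching square together with the constraint that $M$ has odd order.
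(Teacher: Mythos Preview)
Your strategy matches the paper's: reduce via Theorem~\ref{conjugacyclass} and Theorem~\ref{classifstatement} to bounding the number of stable $\text{El}(\Z[\Z_2])$-equivalence classes of matrices $I-B$ arising from free $\Z_2$-SFTs whose underlying SFT is flow equivalent to $\sigma_A$, and then exploit the odd-determinant hypothesis. The paper's execution is shorter and more direct: writing the $\Z_+\Z_2$-matrix as $C=eX+gY$, one has $\det(I-A)=\det(I-(X+Y))\det(I-(X-Y))$, and \cite[Theorem~8.1]{BS} is cited for the fact that when this product is odd the pair of Smith normal forms $\big(\text{Sm}(I-(X+Y)),\text{Sm}(I-(X-Y))\big)$ is already a \emph{complete} invariant of stable elementary equivalence over $\Z[\Z_2]$. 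Finiteness then follows immediately because $\det(I-A)\neq 0$ (it is odd) bounds the possible Smith-form pairs.

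The gap in your write-up is the $K_1$ step. You assert that the stable elementary equivalence classes over $R=\Z[\Z_2]$ of injective square matrices with a fixed finite cokernel $M$ form a quotient of $K_1(R)$. This presupposes that any two such matrices are already stably $GL(R)$-equivalent, so that only a $K_1$ ambiguity remains; in other words, that the $R$-module cokernel is a complete invariant for stable $GL$-equivalence. Over a non-PID such as $R$ that is not automatic: lifting an isomorphism of cokernels gives maps $U,V$ with $VA=BU$, but there is no five-lemma argument forcing $U,V$ to be invertible without further input. Establishing this is exactly the content of \cite[Theorem~8.1]{BS}, and the oddness hypothesis is genuinely used there (via the splitting $R[1/2]\cong\Z[1/2]\times\Z[1/2]$ you identified). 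Your proposed alternative---pass to $R[1/2]$, use Smith normal form over that product of PIDs, then patch back through the Milnor square for $R$---is essentially how that theorem is proved and is the correct way to close the gap. So: replace the $K_1$ sentence either by a citation to \cite{BS} or by actually carrying out the $R[1/2]$ argument you sketched.
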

\begin{proof}
  Let $C$ be a matrix  over $\Z_+\Z_2$ presenting
  a free   $\Z_2$-SFT which is $\Z_2$-flow equivalent
  to a $\Z_2$-SFT associated to a free involution in $\mathcal F_A$. 
  Let  $C= eX + g Y$,  with
  $X$ and $Y$ over $\Z_+$ and $G=\{ e,g\}$. 
  The matrix $F=\left( \begin{smallmatrix} X&Y \\ Y&X
  \end{smallmatrix} \right)$ defines an SFT flow
  equivalent to $\sigma_A$, so  
   $ \det (I-A) = \det (I-F) $, and therefore 
  \begin{equation} \label{deteq} 
  \det(I-A) = \det (I-(X+Y)) \det (I-(X-Y)) \ .
  \end{equation}
  In our  special situation, with 
  $G=\Z_2$ and $\det (I-F) $ is odd,
  by   \cite[Theorem 8.1]{BS} 
  the stable $\text{El}(\Z G)$ equivalence class of $C$ (and thus the
  $G$-flow equivalence class of the $G$-SFT it presents)
  is determined by the pair $\big(\text{Sm}(I-(X+Y)),\text{Sm}(I-(X-Y)\big)$.
  Because $\det (I-A) \neq 0$, it follows from 
\eqref{deteq} that only finitely many
  pairs are possible.
\end{proof}

Let us say a $\Z_2$-SFT is {\it inert} if the involution defining
the $\Z_2$ action is an inert automorphism of the underlying
SFT. This is the class of greatest interest to us. (For
example, these involutions induce involutions on the mapping
torus which are in the kernel of the Bowen-Franks representation.)

  \begin{theorem} Suppose $\mathcal \sigma_A$ is a nontrivial
    mixing SFT and there is an inert $\Z_2$-SFT associated to a
    free involution $V$ in $\mathcal F_A$.  Then $\text{det}(I-A)$
  is an odd integer. 
\end{theorem}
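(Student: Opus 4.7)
My plan is to realize the $\Z_2$-SFT associated to $V$ by a Parry-type block matrix, use inertness to force a key nilpotency, and conclude by a short mod-$2$ determinant calculation. First I would invoke Lemma \ref{icforinvolution} to obtain an invariant cross section $C$ for $V$, yielding an inert, fixed-point-free $\Z_2$-SFT $(\sigma_B, V|_C)$ with $\sigma_B$ flow equivalent to $\sigma_A$ (hence mixing). By Parry's construction for free $G$-SFTs (see \cite{BS}), after a $\Z_2$-topological conjugacy I may take $B = \bigl(\begin{smallmatrix} X & Y \\ Y & X \end{smallmatrix}\bigr)$ with $X, Y$ nonnegative integer $n\times n$ matrices and $V|_C = P = \bigl(\begin{smallmatrix} 0 & I \\ I & 0 \end{smallmatrix}\bigr)$ the vertex-swap. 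This replacement preserves the underlying SFT conjugacy class and inertness, and by Parry--Sullivan flow invariance $\det(I-A) = \det(I-B)$.

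Next I would reinterpret inertness. Decompose $\Q^{2n} = E_+ \oplus E_-$ into the $\pm 1$-eigenspaces of $P$; on the basis $(u,u)$ of $E_+$, $B$ acts as $X+Y$, and on $(u,-u)$ of $E_-$, as $X-Y$. For every $v \in \Z^{2n}$ the vector $Pv - v$ lies in $E_-$, and conversely every integer vector in $E_-$ arises this way. Hence the condition that $P$ acts trivially on the dimension module $\mathcal{D}_B = \varinjlim (\Z^{2n} \xrightarrow{B} \Z^{2n})$---that some $B^\ell$ kills each $Pv - v$---is equivalent to the integer matrix $X - Y$ being nilpotent.

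Finally, conjugating $I - B$ over $\Q$ by $\bigl(\begin{smallmatrix} I & I \\ I & -I \end{smallmatrix}\bigr)$ block-diagonalizes it, giving $\det(I-B) = \det(I-(X+Y)) \det(I-(X-Y))$. Writing $N := X-Y$, nilpotency yields $\det(I-N) = 1$ (all eigenvalues of $I - N$ equal $1$); and from $X + Y = 2Y + N$ we get $I - (X+Y) \equiv I + N \pmod 2$, so $\det(I - (X+Y)) \equiv \det(I + N) \equiv 1 \pmod 2$. Thus $\det(I-A) = \det(I-B) = \det(I-(X+Y))$ is odd, as required. The main obstacle is the second step: carefully converting inertness on the dimension module (not merely on the Bowen--Franks group) into the matrix statement that $X - Y$ is nilpotent; one must use $B$-invariance of $E_-$ and that $B^\ell$ eventually killing $E_-$ matches $(X-Y)^\ell = 0$. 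Once this translation is established, both the Parry reduction and the final mod-$2$ determinant computation are mechanical.
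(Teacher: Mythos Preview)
Your proof is correct and takes a different route from the paper's. The paper cites Fiebig's theorem (a finite-order automorphism is inert iff the induced shift on the quotient has the same zeta function) and then \cite[Lemma 2.2]{KRzp} to conclude $\det(I-tB)\equiv 1\pmod 2$. You instead work directly with the Parry block form $B=\bigl(\begin{smallmatrix}X&Y\\Y&X\end{smallmatrix}\bigr)$, translate inertness of the swap $P$ into nilpotency of $X-Y$ via the $\pm 1$-eigenspace decomposition of the dimension group, and finish with an elementary mod-$2$ determinant computation using $X+Y\equiv X-Y\pmod 2$. Your argument is more self-contained and makes the linear-algebraic content of inertness explicit; the paper's is terser but leans on two nontrivial external results. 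One minor slip: flow equivalence preserves irreducibility but not mixing, so your parenthetical ``hence mixing'' for $\sigma_B$ is incorrect---but since you never actually use mixing of $\sigma_B$, the proof stands.
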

  \begin{proof}
    Suppose  $(\sigma_B, U)$ is an SFT with free inert involution $U$
    associated to $V$.
      Ulf Fiebig proved that a finite order 
  automorphism $U$ of $\sigma_A$ is inert if and only if the homeomorphism
  induced by the shift on  the quotient space of $U$-orbits
   has the same zeta function as $\sigma_A$
  \cite[Theorem B]{Fiebig93}. 
  This condition forces $\det (I-tB) =1 \mod 2$,  by the
  argument of \cite[Lemma 2.2]{KRzp}, 
    so $\det (I-B)$ is odd. 
    Because $\sigma_A$ and $\sigma_B $ are flow equivalent,
    $\det (I-B)=\det(I-A)$.
  \end{proof}

  \begin{remark} 
  If $\det(I-A)$ is an odd negative squarefree integer, then
  $\sigma_A$ is flow equivalent to a full shift with a free
  inert involution, and there is a free inert $\Z_2$-SFT associated
  to an involution of $\mapt X_A$.
  We expect it is possible to prove  such involutions
  exist whenever $\det (I-A)$ is odd,
  by direct construction or by appealing to
%
%  a difficult theorem of   
%  Kim-Roush,   \cite[Theorem 7.2]{KRzp}.
%  For a prime $p$, this theorem 
%  characterizes up to shift equivalence
%  the mixing SFTs $\sigma_A$ such that $\sigma_A$ is a
%  factor of an   SFT $\sigma_B$ shift equivalent to $\sigma_A$,
%  and the fiber over every point is a cardinality $p$
%  orbit of an order $p$ automorphism $U$ of $\sigma_B$.
%
the following difficult result  of   
Kim and Roush.  
\begin{theorem} \cite[Theorem 7.2 and Lemma 2.2]{KRzp} 
  Let $\sigma_A$ be a mixing shift of finite type and let $p$
  be a prime. 
  Then the following are equivalent.
  \begin{enumerate}
  \item
    There is  an   SFT $\sigma_B$ shift equivalent to $\sigma_A$,
    and an  order $p$ automorphism $U$ of $\sigma_B$,
    and a factor map $\pi: X_A \to X_B$ 
    for which the fiber over every point is a cardinality $p$
    orbit of $U$. 
  \item
For all positive integers $n$,
    \[
    o_n \ \geq \ \frac{p-1}p o_{n/p} +
    \frac{p-1}{p^2} o_{n/p^2} +
    \frac{p-1}{p^3} o_{n/p^3} +  \cdots
    \]
    where $o_k$ denotes the number of $\sigma_A$ orbits of
    cardinality $k$.
    \end{enumerate} 
  \end{theorem}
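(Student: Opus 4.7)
I first fix the natural interpretation of the statement: $U$ is a free order-$p$ automorphism on the upper SFT and $\pi$ is the quotient map to the lower one, making $(1)$ the assertion that $\sigma_A$ fits into a free cyclic $\Z/p\Z$-extension whose other factor is shift equivalent to $\sigma_A$. (The literal wording mixes the roles of $\sigma_A$ and $\sigma_B$; the orbit count below gives the same inequality under either convention for which SFT lies on top.)

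For $(1)\Rightarrow(2)$ I count orbits in the cover. Fixing the convention that $X_A$ is upstairs and $\pi\colon X_A\to X_B$ is the quotient by $\langle U\rangle$, each $\sigma_B$-orbit of cardinality $k$ lifts to either $p$ disjoint $\sigma_A$-orbits of cardinality $k$ (\emph{split}) or a single $\sigma_A$-orbit of cardinality $pk$ (\emph{unsplit}); let $a_k,b_k$ be the counts of such split and unsplit $\sigma_B$-orbits, so $a_k+b_k=o_k^B$. Counting $\sigma_A$-orbits of cardinality $n$ gives $o_n = p\,a_n + b_{n/p}$, with the convention $b_{n/p}=0$ when $p\nmid n$. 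Shift equivalence forces $o_n^B=o_n$, and combining the two relations yields $(p-1)a_n = b_n - b_{n/p}$. Iterating the resulting recursion
\[
b_n \;=\; \tfrac{p-1}{p}\,o_n + \tfrac{1}{p}\,b_{n/p}
\]
gives $b_n = \sum_{j\ge 0} \tfrac{p-1}{p^{j+1}}\,o_{n/p^j}$, and the constraint $a_n\ge 0$ (equivalently $b_n\le o_n$) rearranges exactly to the inequality in $(2)$.

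For $(2)\Rightarrow(1)$ I would run the counting argument in reverse and then realize the data concretely. First, use $(2)$ to define nonnegative integers $b_n$ by the recursion above and set $a_n:=o_n-b_n$; this specifies an abstract $\Z/p\Z$-action on the disjoint union of the periodic point sets of a hypothetical quotient system, prescribing which orbits should split and which should not. Second, realize this abstract combinatorial data as the action of a genuine order-$p$ automorphism $U$ of some SFT shift equivalent to $\sigma_A$; a marker construction in the spirit of Kim-Roush \cite{KR1990}, combined with Boyle-Krieger ``varying the embedding'' techniques (as in \cite{BoF1} and used in the proof of Proposition~\ref{richsubgroups}), should produce $U$ with the prescribed action on periodic points. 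Third, transport $U$ back along a strong shift equivalence chain so that $\sigma_A$ itself carries $U$ and the quotient by $\langle U\rangle$ is a genuine SFT.

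The main obstacle is the last step: placing $U$ on $X_A$ itself rather than on a merely shift-equivalent model. This requires matching $\Z[\Z/p\Z]$-module obstructions on the dimension module of $\sigma_A$ that are not visible in the orbit counts of $(2)$, and it is where I expect the syndetic marker machinery developed in \cite{KRzp}, together with the strong shift-equivalence bookkeeping through Wagoner's spaces \cite{WaBAMS,KRW92}, to become essential.
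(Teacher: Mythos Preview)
The paper does not give its own proof of this theorem; it is quoted from Kim and Roush \cite[Theorem 7.2 and Lemma 2.2]{KRzp} and used as a black box in the surrounding remark, where it is explicitly called a ``difficult result''. There is therefore nothing in the paper to compare your argument against.

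On the substance of your attempt: the orbit-counting derivation of $(1)\Rightarrow(2)$ is correct and is essentially what \cite[Lemma 2.2]{KRzp} records; this is the easy direction. You are also right that the printed statement is garbled (the automorphism $U$ is declared to act on $\sigma_B$ while the fibers of $\pi\colon X_A\to X_B$, which lie in $X_A$, are asserted to be $U$-orbits), and your chosen interpretation is the natural one.

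For $(2)\Rightarrow(1)$ your sketch is not close to a proof, and you misplace the main obstacle. Since condition (1) only asks for $U$ on an SFT \emph{shift equivalent} to $\sigma_A$, your ``last step'' of transporting $U$ back to $X_A$ along a strong shift equivalence chain is not required at all. The genuine difficulty, and the content of \cite[Theorem 7.2]{KRzp}, is your second step: actually constructing an order-$p$ automorphism of an SFT realizing the prescribed split/unsplit pattern on \emph{all} periodic orbits simultaneously, with the quotient again an SFT having the right zeta function. The marker and embedding tools you cite from \cite{KR1990,BoF1} control only finitely many orbits at a time; assembling these local adjustments into a single globally defined automorphism is precisely the substantial construction carried out by Kim and Roush, and it does not follow from those earlier techniques. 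There is also an earlier gap you pass over: your recursion gives $b_n=\tfrac{p-1}{p}\,o_n$ when $p\nmid n$, which is not obviously an integer, so even the target combinatorial data $(a_n,b_n)$ are not well defined from the inequality in (2) alone without further argument.
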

The condition (2) above implies $\det(I-tA)=1$
    mod $p$. Condition (2)  holds for all $n$ if it holds up to
    a computable bound. (See \cite[Sections 1-2]{KRzp} for more 
    explanation.) 
%  (One version of their characterization is that $\det(I-tA)=1 \mod p$ and
%  satisfy a certain checkable condition on low order periodic points.) 
    The automorphism $U$ in (1) must be inert
(by  \cite[Theorem B]{Fiebig93}), so there will
  be an inert $\Z_p$-SFT associated to a free $\Z_p$ action on 
  $\mapt X_B$.   The shifts 
  $\sigma_A$ and  $\sigma_B$ in (1) are flow equivalent,
  so there will also be 
an inert $\Z_p$-SFT associated to  a free $\Z_p$ action on 
  $\mapt X_A$.
\end{remark} 

  While the classification of non-free $\Z_2$ SFTs is open,
we can  consider the invariant which is the
flow equivalence class of the fixed point shift. The
following result of Long \cite[Theorem 1.1]{Long}
is an instrument for creating examples.

\begin{theorem} (Long) \label{long} 
  Let $f$ be an inert automorphism of a mixing shift of finite type
  $X$ with $\text{Fix}_f(X)\subset Y$ 
  where $Y \neq X$ and $Y$ is a $f$-invariant subshift of finite type
  in $X$. Suppose $n \geq 2$ and $n$ is the smallest
  positive integer such that $f^n = Id$.
  If the restriction of $f$ to $Y$ is inert, then there exists an inert
  automorphism of $X$, $U$, such that $Y$ is the fixed point shift of $U$,
  where $U^n = id$ and $n$ is the minimal
  positive integer $k$ such that $U^k = id$.
\end{theorem}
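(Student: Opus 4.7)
The plan is to construct $U$ as a modification $U = h \circ f$, where $h \in \mathrm{Aut}(X)$ is an inert automorphism chosen so that $h|_Y = (f|_Y)^{-1}$ and so that the composition has no fixed points outside $Y$ and has order exactly $n$. Since $f|_Y$ is inert on $Y$ with $(f|_Y)^n = \mathrm{id}_Y$, its inverse $g := (f|_Y)^{-1}$ is inert on $Y$. The target equality $U|_Y = g \circ f|_Y = \mathrm{id}_Y$ then forces $Y \subseteq \mathrm{Fix}_U(X)$, and the remaining work is to control the complement and the order.

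First I would extend $g$ to an inert automorphism $h$ of $X$ by a marker construction of Kim-Roush type: because $X$ is mixing and $Y \subsetneq X$, after passage to a high block presentation there are words occurring in $X$ but not in $Y$, which separate the "$Y$-like" region from the rest of $X$. Using these markers, one defines $h$ to act as $g$ on the $Y$-like region (which contains $Y$) and as the identity on a marker neighborhood, with inertness following by a standard computation in the dimension representation (agreement with an inert map on a clopen set whose complement is a controlled factor). Set $U_0 := h \circ f$, which is inert as a product of inerts and satisfies $U_0|_Y = \mathrm{id}_Y$.

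To enforce $\mathrm{Fix}_{U_0}(X) = Y$ and $U_0^n = \mathrm{id}$, I would induct on period. At stage $p$, replace the current candidate $U_k$ by $U_{k+1} := c_k \circ U_k$, where $c_k$ is an inert automorphism supported on a clopen neighborhood of the period-$p$ orbits lying in $X \setminus Y$, designed to (a) displace any unwanted fixed point on these orbits, and (b) correct $U_{k+1}^n$ to equal the identity on periodic points of period at most $p$. The existence of such $c_k$ uses the freedom to realize arbitrary even permutations on periodic points by products of inert involutions, as in the construction of the subgroup $K$ in the proof of Proposition~\ref{richsubgroups}, applied within the SFT complement of $Y$.

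The main obstacle is ensuring that these period-by-period modifications stabilize to a genuine block code rather than merely a Borel or inverse-limit object. This is handled by keeping each $c_k$ supported on a clopen set inside $X \setminus Y$ that is disjoint both from $Y$ and from the supports of previous $c_j$; because $X$ has only finitely many periodic orbits of each period, the product $\cdots c_2 c_1$ is locally a finite composition and hence defines a continuous map with bounded window on each clopen piece. Minimality of the order for the resulting $U$ then follows because $f$ already has minimal order $n$ and the construction alters only the inert component of the action on $X \setminus Y$, preserving at least one orbit of $U$-period exactly $n$.
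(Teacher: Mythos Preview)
The paper does not supply its own proof of this statement: it is quoted verbatim as a result of Long \cite[Theorem 1.1]{Long} and used as a black box for producing examples of involutions with prescribed fixed-point shifts. So there is no proof in the paper to compare against.

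Your proposal has a genuine gap at precisely the point you flag as ``the main obstacle.'' You argue that because the correction maps $c_k$ have pairwise disjoint clopen supports in $X \setminus Y$, the infinite product $\cdots c_2 c_1$ is ``locally a finite composition and hence defines a continuous map with bounded window on each clopen piece.'' Disjointness of supports does not control the coding radii of the $c_k$: the periodic orbits of $X$ lying in $X \setminus Y$ accumulate on $Y$, so the supports must accumulate on $Y$ as well, and nothing in the construction bounds the block window uniformly. An infinite composition of sliding block codes with unbounded windows need not be continuous, let alone a sliding block code. Even granting continuity, inertness of the limit does not follow formally from inertness of the factors, since the dimension representation is a homomorphism on $\text{Aut}(\sigma_A)$, not on limits of sequences therein.

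There is a second gap in the order argument. You write that $U$ has exact order $n$ because ``$f$ already has minimal order $n$ and the construction alters only the inert component \dots\ preserving at least one orbit of $U$-period exactly $n$.'' But you have modified the action on every finite orbit outside $Y$ and forced $U|_Y = \text{id}$, so no orbit inherits its $U$-period from $f$; a witness to order exactly $n$ would have to be engineered into the construction. A workable proof must build $U$ in a single finite-range marker construction rather than as an infinite limit of period-by-period corrections.
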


For example, let $f$ be the inert involution of the full
shift on symbols $0,1,2$ which exchanges the symbols $0$ and $1$.
For  a positive integer $n$, let
$T_{n}$ be the subshift with language $(\{ 0,1\}^n2)^*$
(words of length $n$ on $\{ 0,1 \}$ alternate with the symbol $2$).
Then $T_{n}$ is invariant under $f$, and
one can check the restriction of $f$
to $T_{n}$ is inert. By Long's theorem, $T_{n}$ is  the
fixed point shift of some inert involution of the 3-shift.
$T_n$ is an irreducible SFT with Bowen-Franks group
$\Z /(2^n-1)\Z$.  So, infinitely many flow equivalence classes
occur as the fixed shift of an inert $\Z_2-SFT$ associated to an
involution of $\mapt X_A$, and those involutions must
represent distinct elements of $\mathcal M_A$.

One can more generally
produce infinitely many distinct flow equivalence classes
of inert involutions of $\Z_2$-SFTs associated to $\mapt X_A$,
whenever there is a free $\Z_2$-SFT associated to $\mapt X_A$, 
by combining  some of Long's
results (\cite[Theorem 1.1, Theorem 1.2, Lemma B.2]{Long}) 
and some construction work (e.g., for $k$ in $\N$ embed into $X_A$
$2k$ disjoint copies of an SFT admitting an inert involution,
say using \cite{BoF1}).

%  \begin{remark} \label{oddenough}
%    This remark recalls the state of the art
%    for creating free inert involutions of mixing SFTs.  
%  U. Fiebig proved that a finite order 
%  automorphism $U$ of $(\sigma_A$ is inert if and only if the homeomorphism
%  induced by the shift on  the quotient space of $U$-orbits
%  has the   has the same zeta function as $\sigma_A$
%  \cite[Theorem B]{Fiebig93}. 
%  Then the  $\Z_2$ case of a 
%  difficult theorem of   Kim and Roush  \cite[Theorem 7.2]{KRzp},
%  along with \cite[Lemma 2.2]{KRzp}, 
%  shows  a mixing SFT $\sigma_B$
%  is shift equivalent to an SFT admitting a
%  free inert involution if and only if $\det (I-tB)
%  =1 \mod 2$ and a certain condition is satisfied
%  by the finite sequence 
%  $(|\mathcal O_n|)_{n=1}^N$, where $N$ is a (readily) computable 
%  integer, and    $|\mathcal O_n|$ is the number of
%  shift orbits of cardinality $n$. To show $\mapt X_A$  admits
%  a free inert involution, then, it suffices to find $X_B$ flow
%  equivalent to $X_A$ satisfying the Kim-Roush condition on $\det(I-tB)$.
%\end{remark} 
%  

%\begin{appendix}
\appendix

\section{Flow codes} \label{sec:flowcodes}

Flow codes were developed in \cite{BCEfei} as a flow map
analogue of block codes. In \cite{BCEfei},
flow codes were considered for not necessarily invertible
flow maps. In this appendix, for simplicity we only
consider flow equivalences, and ``flow code'' means
``flow code'' for a flow equivalence.

%We need to consider them in some detail
%here to develop a computational analogue of composition of block
%codes.

First we recall some definitions from \cite{BCEfei}.
Let $C$ be a discrete cross section for a subshift $X$.
Given $C$, the {\it return time bisequence} of a point $x$ in $C$
is the bisequence $(r_n)_{n\in \Z}$
(with $r_n=r_n(x)$) such that
\begin{enumerate}
\item
 $\sigma^j (x)\in C$ if and only if
$j=r_n$ for some $n$,
\item
 $r_n< r_{n+1}$ for all $n$, and
\item
$r_0= 0$.
%$r_0\leq 0< r_1$.
\end{enumerate}
%If $x\in C$, then $(r_n(x))_{n\in \Z}$ is also called its
%{\it return time bisequence}.
A {\it return word} is a word equal to $x[0,r_1(x))$ for some $x\in
C$.
Given $x\in C$ and $n\in \Z$, $W_n=W_n(x)$ denotes the return
word $x[r_n,r_{n+1})$.
In the context of a given $C$, when we write
$x=\dots W_{-1}W_0W_1 \dots $ below, we mean $x\in C$ and $W_n = W_n(x)$.
Given $x\in C$ and $i\leq j$, the tuple
$(W_n(x))_{n=i}^j$ is the $[i,j]$ return block of $x$,
also denoted $W_i^j(x)$, and
$\mathcal W(i,j,C) = \{W_i^j(x) : x\in C\} $.
To know this return block is to know
the word $W= W_i\cdots W_j$ together with
its factorization as a concatenation of  return words.

\begin{definition}\label{defnwordblockcode}
  Suppose $C$ is a  discrete cross sections
  of a subshift $(X,T)$.
A $C$  {\it word block code}
is a function $\Phi :\mathcal W(-N,N,C) \to \mathcal W'_0$,
where $\mathcal W'_0$ is a set of words
and $N$ is a nonnegative integer.
A  {\it word block code}
is a $C$  {\it word block code} for some $C$.
The  function $\phi$ from
$C$ into a subshift
 given by  $\Phi$
is defined to map
$x = (W_n)_{n\in \Z} $
to the concatenation
$x' =   (W'_n)_{n\in \Z} $, with   $W'_n=  \Phi (W_{n-M},...,W_{n+M}) $
and $x'[0,\infty )=W'_0W'_1 \dots \ $ .
  %  By an abuse of terminology, we will also call
  %the function $\phi$ a word block code.
\end{definition}

For $D$ a discrete cross section of a subshift $(X',T')$, a
$C,D$ flow code  is a $C$ word block code $\Phi$ defined
as above, with the following additional properties:
\begin{enumerate}
\item
  $\mathcal W'_0$ is the set of $D$ return words
\item
  The induced map $\phi$ is a homeomorphism  $\phi: C \to D$
  which is a topological conjugacy of the  return maps
  of $C$ and $D$ (with respect to $T$ and  $T'$).
%  (So, $\mapt \phi : \mapt X \to \mapt X'$ is a flow equivalence
%  such that $(\mapt \phi ) $ maps the cross section $\widetilde C$
%  onto the cross section $\widetilde D$.)
\end{enumerate}

In this case we refer to  $(\Phi , C,D) $ as a flow code
defining $\phi$. This code induces a flow equivalence
$\mapt \phi : \mapt X \to \mapt X'$  by the following rule,
in which $r(x) = |W'_0(x)|/ |W_0(x)|$:
\begin{align*}
\mapt \phi :
      [x,t] &\mapsto [\phi (x), t r(x)]  \ , \quad
      \text{ if }x\in C \text{ and }
        0\leq t < |W_0(x)|\ .
\end{align*}
By \cite[Theorem 5.1]{BCEfei},
for every flow equivalence of subshifts, $F: \mapt X \to \mapt X'$,
there is a flow code $(\Phi ,C,D)$ inducing $ \mapt \phi :
\mapt X \to \mapt X'$ such that $\mapt \phi$  is isotopic to
$F$.
%(i.e., $(\mapt \phi)^{-1}$ is trivial in $\mathcal M(T)$).

Now suppose  flow codes $(\Phi_1,C_1, D_1)$ and
$(\Phi_2,C_2, D_2)$ induce $\phi_1: C_1 \to D_1$ and
$\phi_2: C_2 \to D_2$, with
$\mapt \phi_1: \mapt X_1 \to \mapt X_2 $ and
$\mapt \phi_2: \mapt X_2 \to \mapt X_3 $.
If $D_1=C_2$, then we can compose the word block codes as easily
as we compose  block codes to obtain a flow code $(\Phi , C_1, D_2)$
defining $\phi : C_1 \to D_2$ such that
$\mapt \phi = (\mapt \phi_2)\circ (\mapt \phi_1)$.
But if $C_1 \neq D_1$, we need another ingredient to
produce a flow code
 as a function of the given data  such that
$[\mapt \phi] = [(\mapt \phi_2)\circ (\mapt \phi_1)]$.

  Given a discrete cross section $C$  for $(X,T)$
 and $x\in X$,
  define $\tau (x,C) = \min \{i\geq 0: T^i(x) \in C\}$.
  The next proposition adapts the Parry-Sullivan  argument
  (\cite{PS}; see Theorem \ref{pstheorem})
to discrete cross sections. As the argument is very similar to
arguments given in \cite{BCEfei}, we will leave
a proof as an exercise (perhaps after reviewing \cite{BCEfei}).

  \begin{proposition}\label{pslemma}
    Suppose $C$ and $D$ are discrete cross sections for a subshift
 $(X,T)$. Define
    \begin{align*}
      K = K&(C,D) =
(C\cap D) \, \bigcup  \,
      \{x\in C\setminus D: \tau (x,D ) < \tau (x,C) \} \subset C\\
      L = L&(C,D) = \{ \sigma^{\tau (x,D)}(x):  x\in K \} \subset D \\
      \delta  :  K &\to L \ , \quad \delta :
      x \mapsto T^{\tau (x,D)}
      \ \ .
    \end{align*}
    Then
    \begin{enumerate}
\item
$K$ and $L$ are discrete cross sections for $(X,T)$.
\item
  $\delta $ is a well defined homeomorphism.
\item
  $\delta $ is a topological conjugacy of the return maps to $K$ and $L$
  under $T$, i.e.
$\delta \rho_K
  = \rho_L \delta $.
\item $\delta$ is given by a word block code
  $\Delta: \mathcal W(K,0,1)\to \mathcal W(L)$. \\
    $\delta^{-1}$ is given by a word block code
  $\Psi : \mathcal W(L,-1,0)\to \mathcal  W(K)$.
\item
  $[ \mapt \delta ]  $ is the identity element in $\mathcal M (T)$.
\item
  \label{algitem}
  Suppose the subshift $(X,T)$ has decidable language.
Let there be given $N$ in $\N$ and
word sets $\mathcal V_C$, $\mathcal V_D$ such that
$C = \{ x\in X: x_{-N} \dots x_N \in \mathcal V_C\}$ and
$D = \{ x\in X: x_{-N} \dots x_N \in \mathcal V_D\}$. Then
there is an algorithm to determine $M$ in $\N$ and
the following:
%words sets $\mathcal V_K$, $\mathcal V_L$ such that
%the following hold
\begin{enumerate}
\item  $\mathcal V_K\subset \mathcal W(X) $ such that
  $K=\{ x\in X: x_{-N} \dots x_{M+N} \in \mathcal V_K\}$.
\item
    $\mathcal V_L\subset \mathcal W(X) $ such that
  $L=\{ x\in X: x_{-(M+N)} \dots x_{M+N} \in \mathcal V_L\}$.
 \item The word codes $\Delta $ and $\Psi$.
\end{enumerate}
  \end{enumerate}
\end{proposition}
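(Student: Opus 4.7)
My plan is to exploit the orbit-by-orbit picture. On each $T$-orbit, the hits to $C$ and to $D$ interleave, and to each $D$-hit $y$ I can associate its unique last preceding $C$-hit $x$ (taking $x=y$ if $y\in C\cap D$). The set $K$ should then correspond exactly to the $C$-hits that arise as such predecessors, i.e.\ those for which the next $D$-hit occurs at or before the next $C$-return; $L$ is the corresponding set of $D$-hits; and $\delta:K\to L$ is the pairing map $x\mapsto T^{\tau(x,D)}(x)$, with inverse sending each $y\in L$ back to its preceding $C$-hit.

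For parts (1)--(3), zero-dimensionality and the clopen nature of $C,D$ make the relevant return-time functions locally constant, so $K$ is clopen in $C$, $L$ is clopen in $D$, and both have locally constant and bounded return times; hence both are discrete cross sections. The map $\delta$ is locally a fixed-time forward shift of $T$, hence a homeomorphism whose inverse is the corresponding backward shift. The pairing is manifestly order-preserving on each orbit, so successive $K$-hits map to successive $L$-hits, giving $\delta\rho_K=\rho_L\delta$. For (4), both $K$-membership and the transition from $W_0(x)$ to $W'_0(\delta(x))$ depend only on a bounded symbolic window around $x$, so I can write down the word codes $\Delta$ and $\Psi$ directly.

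For (5), unpacking the flow-code formula gives $\mapt\delta([x,t]) = [x,\,\tau(x,D)+tr(x)]$ for $x\in K$ and $0\leq t<|W_0(x)|$, where $r(x)=|W'_0(x)|/|W_0(x)|$; in particular $\mapt\delta$ preserves every orbit. The natural candidate isotopy is
\[
h_s([x,t]) \ =\ [x,\, s\tau(x,D)+t(sr(x)+(1-s))],\qquad s\in[0,1],
\]
for $x\in K$ and $0\leq t<|W_0(x)|$. Then $h_0=\mathrm{id}$, $h_1=\mapt\delta$, and for each fixed $s$ the right side is a monotone orbit-preserving reparameterization. Continuity at successive $K$-returns will reduce to the cocycle identity
\[
\tau(x,D)+|W'_0(x)| \ =\ |W_0(x)|+\tau(\rho_K(x),D),
\]
which I verify by noting that both sides compute the time from $x$ to the same point $\rho_L(\delta(x))=\delta(\rho_K(x))$ on an aperiodic orbit, hence the identity holds everywhere by density. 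Each $h_s$ is thus a flow equivalence, so $[\mapt\delta]$ is trivial in $\mathcal M(T)$.

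Finally, for (6), decidability of the language makes everything constructive: I enumerate words of increasing length, use $\mathcal V_C,\mathcal V_D$ to recognise $C$- and $D$-occurrences, and locate a bound $M$ on the window needed to determine $K$-membership and the one-step $K$-return (such a bound exists by compactness of $K$ and is detectable by algorithmic search, since eventually no new return words appear). Then $\mathcal V_K$, $\mathcal V_L$, and $\Delta$, $\Psi$ are read off mechanically. The main obstacle beyond routine bookkeeping is part (5): producing a genuine isotopy through flow equivalences rather than merely an orbit-preserving homotopy. The cocycle identity above, which glues the local piecewise-linear time-shift isotopies across cross-section boundaries, is the crucial input.
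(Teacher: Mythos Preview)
Your proposal is correct and follows the natural line. The paper itself does not give a proof: it simply says the proposition adapts the Parry--Sullivan argument to discrete cross sections, that the details are similar to those in \cite{BCEfei}, and leaves the verification as an exercise, adding only the one-line remark that decidability of the language is what lets one compute an upper bound on the return time to $K$ (which is exactly the point you isolate in part (6)). Your orbit-by-orbit pairing picture for (1)--(4), your explicit piecewise-linear isotopy $h_s$ for (5), and your search-for-$M$ procedure for (6) are precisely the expected fill-ins.

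One small correction in (5): your appeal to density of aperiodic points to establish the cocycle identity
\[
\tau(x,D)+|W'_0(x)| \;=\; |W_0(x)|+\tau(\rho_K(x),D)
\]
is both unnecessary and, for a general subshift, unjustified (aperiodic points need not be dense). The identity follows directly from (3): each side is the sum of well-defined positive return times from $x$ to the single point $\rho_L(\delta(x))=\delta(\rho_K(x))$, and this computation is valid on every orbit, periodic or not. With that in hand your $h_s$ glues continuously at $K$-returns, has strictly positive slope $s\,r(x)+(1-s)$ on each segment, and (by telescoping the displacements over one full period) has degree $1$ on each circle, so it is a flow equivalence for every $s$.
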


  In Part (6) above, the decidability of the language lets us
  find an upper bound  to the return time to $K$.

  We say a  discrete cross section $C$ for a subshift $(X,T)$ is
  {\it explicitly given} if there is  given $N$ in $\N$
  and a subset $\mathcal V_C$ of the language of $X$
  such that $C = \{ x\in X: x[-N,N] \in \mathcal V\}$
  (or if $C$ is given by data from which such a set $\mathcal V$
  could be algorithmically produced).
  Similarly, a flow code $(\Phi ,C,D)$ is explicitly given
  if $C$ is explicitly given and for some $M$,
  $\Phi$ is given as
  a function from a subset of $\mathcal W(C, -M,M)$ 
  (or by algorithmically equivalent information).
%  and for some $M$ there is given a function $\Phi$ from
%  concatenations $W_M\cdots W_M$ of $2M+1$ $C$-return words
%  to $\overline C$-return words

  \begin{proposition}\label{propcomp}
    For $i=1\dots ,k+1$, let
    $(X_i,T_i)$ be a subshift
 with decidable language.
 Suppose for $1\leq i \leq k$ that
    $\mapt \phi_i: \mapt X_i \to \mapt X_{i+1}$ is a flow equivalence
    defined from a homeomorphism $\phi_i: C_i \to D_i $
    defined by an explicitly given  flow code $(\Phi_i, C_i, D_i)$.

    Then there is an algorithm which produces
    an explicitly given flow code $(\Phi , E, \overline{E})$,
    with $E\subset C_1$ and $\overline E\subset D_k$,
    inducing $\phi: E_1 \to E_k$
such that
    $(\mapt \phi_k)\circ (\mapt \phi_1)$ and $\mapt \phi $  are
isotopic.
        \end{proposition}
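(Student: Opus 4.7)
The central obstruction is that, for each $i$, the codomain $D_i$ of $\phi_i$ and the domain $C_{i+1}$ of $\phi_{i+1}$ are in general distinct discrete cross sections of $(X_{i+1},T_{i+1})$, so the word block codes cannot be concatenated directly. The plan is to insert a Parry--Sullivan bridge between each consecutive pair $(\phi_i,\phi_{i+1})$ using Proposition \ref{pslemma}, and then iteratively restrict to compatible sub-cross-sections so that the entire chain collapses into a single word block code whose induced flow equivalence is isotopic to $\mapt\phi_k\circ\cdots\circ\mapt\phi_1$.

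Concretely: for each $i=1,\ldots,k-1$, I would apply Proposition \ref{pslemma} to the pair $(D_i,C_{i+1})$ to obtain discrete cross sections $K_i\subset D_i$ and $L_i\subset C_{i+1}$, a homeomorphism $\delta_i:K_i\to L_i$ conjugating the two return maps, and---by part~(6), which applies because each $(X_j,T_j)$ has decidable language---effectively computable word codes $\Delta_i,\Psi_i$ describing $\delta_i$ and $\delta_i^{-1}$, with $[\mapt\delta_i]=[\text{Id}]$. (Each codomain $D_i$ can be rendered explicitly given from the explicit flow code $(\Phi_i,C_i,D_i)$ as a preliminary, by tracking which words appear as $\Phi_i$-outputs.) Then build $\psi_i:E_i\to F_i$ inductively, starting from $\psi_1:=\phi_1$. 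Given $\psi_i:E_i\to F_i\subset D_i$ realized by an explicit word block code with $\mapt\psi_i$ isotopic to $\mapt\phi_i\circ\cdots\circ\mapt\phi_1$, set $\widetilde E_i:=\psi_i^{-1}(K_i)\subset E_i$. This is a discrete cross section, since $\psi_i$ is a conjugacy of return maps and $K_i$ is a cross section for $\rho_{D_i}$, and it is algorithmically producible by concatenating $E_i$-return words until their $\psi_i$-images assemble $K_i$-return words. Standard composition of word block codes---$\psi_i|_{\widetilde E_i}$, then $\delta_i$, then $\phi_{i+1}|_{L_i}$---yields an explicit word block code defining $\psi_{i+1}:=\phi_{i+1}\circ\delta_i\circ\psi_i|_{\widetilde E_i}:\widetilde E_i\to\phi_{i+1}(L_i)\subset D_{i+1}$. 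After $k-1$ iterations, output $\Phi:=$ the accumulated word code, $E:=E_k\subset C_1$, and $\overline E:=F_k\subset D_k$.

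The isotopy assertion at each stage follows from two facts: first, restricting a flow code from a cross section to a sub-cross-section yields a flow equivalence isotopic to the original, since both realize the same correspondence between flow orbits and differ only in their piecewise-linear time reparameterizations between returns, so their composition with the inverse fixes every flow orbit and is isotopic to the identity by Theorem \ref{theorem:231}; second, $[\mapt\delta_i]=[\text{Id}]$ by Proposition \ref{pslemma}(5). Chaining these through the induction gives $[\mapt\psi_k]=[\mapt\phi_k]\cdots[\mapt\phi_1]$.

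The main nuisance, and the only real obstacle, will be window-size bookkeeping: each restriction to a sub-cross-section and each composition of word codes enlarges the block length needed to read the requisite local data, and one must verify carefully that $\widetilde E_i$, its return words, and the composed codes are genuinely computable and not merely well-defined in principle. The combination of decidable language and the algorithmic content of Proposition \ref{pslemma}(6) is what converts this bookkeeping into an actual algorithm.
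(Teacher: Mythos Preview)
Your approach is essentially the same as the paper's: reduce via Proposition~\ref{pslemma} to the case where the cross sections align, then compose word codes directly. The paper handles only $k=2$ and appeals to induction, while you run the iteration explicitly; these are equivalent.

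One citation is wrong, however. To justify that restricting a flow code to a sub-cross-section yields an isotopic flow equivalence, you invoke Theorem~\ref{theorem:231}. That theorem is stated only for irreducible SFTs, and its implication $(2)\Rightarrow(1)$ genuinely fails for general subshifts (see the remark immediately following it). Since Proposition~\ref{propcomp} is stated for arbitrary subshifts with decidable language, this appeal is illegitimate. The fix is easy and is what the paper relies on implicitly: the composition with the inverse is not merely orbit-preserving but has the explicit form $y\mapsto\alpha_{\beta(y)}(y)$ for a continuous $\beta$ (this is exactly what ``differ only by piecewise-linear time reparameterizations'' means), and any such map is isotopic to the identity via $y\mapsto\alpha_{t\beta(y)}(y)$, $t\in[0,1]$. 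No irreducibility is needed.
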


  Note, we are not claiming to produce a  $\phi $
  such that
  $(\mapt \phi_2)\circ (\mapt \phi_1) = \mapt \phi $.

  \begin{proof}[Proof of Proposition \ref{propcomp}]
    By induction, it suffices to prove the proposition
    assuming $k=2$.

    For the explicitly given discrete cross sections $D_1$ and
    $C_2$ of $(X_2, T_2)$, we have explicitly  from Lemma \ref{pslemma}
    $K=K(D_1, C_2)$,
    $L=L(D_1,C_2)$ and a
    flow code $(\Delta ,K,L)$ for $\delta$.
     Define  $E = \phi_1^{-1} (K)$ and
    $\overline{E} = \phi_2(L)$.
    Set $\psi_1 = \phi_1 |_E$,  $\psi_2 = \phi_2|L$ and
    $\phi = \psi_2 \delta \psi_1$.
    Now $\mapt \phi_i$ and $\mapt \psi_i$ are isotopic, for  $i=1,2$,
    and
  $\mapt \delta$ is isotopic to
    the identity, by Lemma \ref{pslemma}.
Therefore
    $(\mapt \phi_2)\circ (\mapt \phi_1)$ is isotopic to
    $\mapt \phi $.

From the explicitly given word codes for $\phi_1$ and $\phi_2$,
we can compute explicitly
    a flow code $(\Psi_1 , E, K) $ for $\psi_1$ and
    a flow code $(\Psi_2 , L, \overline E) $ for $\psi_2$.
    Now the discrete cross sections align,
    and we can compose the word codes
     $(\Psi_2 , L, \overline E) $, $(\Delta ,K,L)$,
$(\Psi_1 , E, K) $ to obtain a
    block word code
    rule $(\Phi , E, \overline E)$ for $\phi: E \to \overline E$,
with $\Phi$ defined for some $M$ on a set $\mathcal W$
    containing $\{ \mathcal W_{-M}^M(x): x\in E\} $.
    (Moreover,   by solvability of the word problem for $(X_1,T_1)$,  we
    may  then choose to shrink
    $\mathcal W$ so that the containment becomes equality.)
\end{proof}

%%%%%%%%%%%%%%%%%%%%%%%%%%%%e%%%%%%%%%%%%%%%%%%%%%%%%%%%%%%%%%%%%%%%%%%%%%%%%%%%%%sec

\bibliographystyle{plain}
\bibliography{mcg}

\begin{thebibliography}{10}

\bibitem{BH1986}
F.~Blanchard and G.~Hansel.
\newblock Syst\`emes cod\'es.
\newblock {\em Theoret. Comput. Sci.}, 44(1):17--49, 1986.

\bibitem{BowF1}
R.~Bowen and J.~Franks.
\newblock Homology for zero-dimensional nonwandering sets.
\newblock {\em Ann. of Math. (2)}, 106(1):73--92, 1977.

\bibitem{Bo3}
M.~Boyle.
\newblock Flow equivalence of shifts of finite type via positive
  factorizations.
\newblock {\em Pacific J. Math.}, 204(2):273--317, 2002.

\bibitem{Bo1}
M.~Boyle.
\newblock Positive {$K$}-theory and symbolic dynamics.
\newblock In {\em Dynamics and randomness ({S}antiago, 2000)}, volume~7 of {\em
  Nonlinear Phenom. Complex Systems}, pages 31--52. Kluwer Acad. Publ., 2002.

\bibitem{BCEgfe}
M.~Boyle, T.~Carlsen, and S.~Eilers.
\newblock Flow equivalence of {G}-{SFT}s.
\newblock arXiv:1512.05238, 2015.

\bibitem{BCEbce}
M.~Boyle, T.~Carlsen, and S.~Eilers.
\newblock Flow equivalence of sofic shifts.
\newblock Israel J. Math. to appear; arXiv:1511.03481, 2015.

\bibitem{BCEfei}
M.~Boyle, T.~Carlsen, and S.~Eilers.
\newblock Flow equivalence and isotopy for subshifts.
\newblock {\em Dyn. Syst.}, 32(3):305--325, 2017.

\bibitem{BoF1}
M.~Boyle and U.-R. Fiebig.
\newblock The action of inert finite-order automorphisms on finite subsystems
  of the shift.
\newblock {\em Ergodic Theory Dynam. Systems}, 11(3):413--425, 1991.

\bibitem{BH1}
M.~Boyle and D.~Handelman.
\newblock Orbit equivalence, flow equivalence and ordered cohomology.
\newblock {\em Israel J. Math.}, 95:169--210, 1996.

\bibitem{BoHuang}
M.~Boyle and D.~Huang.
\newblock Poset block equivalence of integral matrices.
\newblock {\em Trans. Amer. Math. Soc.}, 355(10):3861--3886, 2003.

\bibitem{BK2}
M.~Boyle and W.~Krieger.
\newblock Periodic points and automorphisms of the shift.
\newblock {\em Trans. Amer. Math. Soc.}, 302(1):125--149, 1987.

\bibitem{mbwk:amsess}
M.~Boyle and W.~Krieger.
\newblock Almost {M}arkov and shift equivalent sofic systems.
\newblock In {\em Dynamical systems (College Park, MD, 1986--87)}, volume 1342
  of {\em Lecture Notes in Math.}, pages 33--93. Springer, Berlin, 1988.

\bibitem{BLR}
M.~Boyle, D.~Lind, and D.~Rudolph.
\newblock The automorphism group of a shift of finite type.
\newblock {\em Trans. Amer. Math. Soc.}, 306(1):71--114, 1988.

\bibitem{BoSc2}
M.~Boyle and S.~Schmieding.
\newblock Finite group extensions of shifts of finite type: {$K$}-theory,
  {P}arry and {L}iv\v sic.
\newblock {\em Ergodic Theory Dynam. Systems}, 37(4):1026--1059, 2017.

\bibitem{BS}
M.~Boyle and M.C. Sullivan.
\newblock Equivariant flow equivalence for shifts of finite type, by matrix
  equivalence over group rings.
\newblock {\em Proc. London Math. Soc. (3)}, 91(1):184--214, 2005.

\bibitem{BoW04}
M.~Boyle and J.~B. Wagoner.
\newblock Positive algebraic {$K$}-theory and shifts of finite type.
\newblock In {\em Modern dynamical systems and applications}, pages 45--66.
  Cambridge Univ. Press, Cambridge, 2004.

\bibitem{CL2015}
Valerio Capraro and Martino Lupini.
\newblock {\em Introduction to sofic and hyperlinear groups and {C}onnes'
  embedding conjecture}, volume 2136 of {\em Lecture Notes in Mathematics}.
\newblock Springer, Cham, 2015.
\newblock With an appendix by Vladimir Pestov.

\bibitem{Cthesis2011}
Sompong Chuysurichay.
\newblock {\em Positive rational strong shift equivalence and the mapping class
  group of a shift of finite type}.
\newblock ProQuest LLC, Ann Arbor, MI, 2011.
\newblock Thesis (Ph.D.)--University of Maryland, College Park.

\bibitem{CQY}
Ethan~M. Coven, Anthony Quas, and Reem Yassawi.
\newblock Computing automorphism groups of shifts using atypical equivalence
  classes.
\newblock {\em Discrete Anal.}, pages Paper No. 3, 28, 2016.

\bibitem{CFKP}
Van Cyr, John Franks, Bryna Kra, and Samuel Petite.
\newblock Distortion and the automorphism group of a shift.
\newblock arXiv:1611.05913, 2016.

\bibitem{CK2016}
Van Cyr and Bryna Kra.
\newblock The automorphism group of a minimal shift of stretched exponential
  growth.
\newblock {\em J. Mod. Dyn.}, 10:483--495, 2016.

\bibitem{DDMP2016}
Sebasti\'an Donoso, Fabien Durand, Alejandro Maass, and Samuel Petite.
\newblock On automorphism groups of low complexity subshifts.
\newblock {\em Ergodic Theory Dynam. Systems}, 36(1):64--95, 2016.

\bibitem{ERRS}
Soren Eilers, Gunnar Restorff, Efren Ruiz, and Adam P.~W. Sorensen.
\newblock The complete classification of unital graph {C*}-algebras: Geometric
  and strong.
\newblock arXiv:1611.07120, 2016.

\bibitem{FM2012}
Benson Farb and Dan Margalit.
\newblock {\em A primer on mapping class groups}, volume~49 of {\em Princeton
  Mathematical Series}.
\newblock Princeton University Press, Princeton, NJ, 2012.

\bibitem{Fiebig93}
Ulf-Rainer Fiebig.
\newblock Periodic points and finite group actions on shifts of finite type.
\newblock {\em Ergodic Theory Dynam. Systems}, 13(3):485--514, 1993.

\bibitem{F1}
J.~Franks.
\newblock Flow equivalence of subshifts of finite type.
\newblock {\em Ergodic Theory Dynam. Systems}, 4(1):53--66, 1984.

\bibitem{GPS1995}
Thierry Giordano, Ian~F. Putnam, and Christian~F. Skau.
\newblock Topological orbit equivalence and {$C^*$}-crossed products.
\newblock {\em J. Reine Angew. Math.}, 469:51--111, 1995.

\bibitem{GPSfull}
Thierry Giordano, Ian~F. Putnam, and Christian~F. Skau.
\newblock Full groups of {C}antor minimal systems.
\newblock {\em Israel J. Math.}, 111:285--320, 1999.

\bibitem{GM}
R.~I. Grigorchuk and K.~S. Medinets.
\newblock On the algebraic properties of topological full groups.
\newblock {\em Mat. Sb.}, 205(6):87--108, 2014.

\bibitem{H}
G.~A. Hedlund.
\newblock Endomorphisms and automorphisms of the shift dynamical system.
\newblock {\em Math. Systems Theory}, 3:320--375, 1969.

\bibitem{Hochman2010}
Michael Hochman.
\newblock On the automorphism groups of multidimensional shifts of finite type.
\newblock {\em Ergodic Theory Dynam. Systems}, 30(3):809--840, 2010.

\bibitem{JM}
Kate Juschenko and Nicolas Monod.
\newblock Cantor systems, piecewise translations and simple amenable groups.
\newblock {\em Ann. of Math. (2)}, 178(2):775--787, 2013.

\bibitem{KR1990}
K.~H. Kim and F.~W. Roush.
\newblock On the automorphism groups of subshifts.
\newblock {\em Pure Math. Appl. Ser. B}, 1(4):203--230 (1991), 1990.

\bibitem{KRzp}
K.~H. Kim and F.~W. Roush.
\newblock Free {$Z_p$} actions on subshifts.
\newblock {\em Pure Math. Appl.}, 8(2-4):293--322, 1997.

\bibitem{KRW92}
K.~H. Kim, F.~W. Roush, and J.~B. Wagoner.
\newblock Automorphisms of the dimension group and gyration numbers.
\newblock {\em J. Amer. Math. Soc.}, 5(1):191--212, 1992.

\bibitem{KRW}
K.~H. Kim, F.~W. Roush, and Susan~G. Williams.
\newblock Duality and its consequences for ordered cohomology of finite type
  subshifts.
\newblock In {\em Combinatorial \& computational mathematics ({P}ohang, 2000)},
  pages 243--265. World Sci. Publ., River Edge, NJ, 2001.

\bibitem{KLP}
Young-One Kim, Jungseob Lee, and Kyewon~K. Park.
\newblock A zeta function for flip systems.
\newblock {\em Pacific J. Math.}, 209(2):289--301, 2003.

\bibitem{Lind84}
D.~A. Lind.
\newblock The entropies of topological {M}arkov shifts and a related class of
  algebraic integers.
\newblock {\em Ergodic Theory Dynam. Systems}, 4(2):283--300, 1984.

\bibitem{LM}
Douglas Lind and Brian Marcus.
\newblock {\em An {I}ntroduction to {S}ymbolic {D}ynamics and {C}oding}.
\newblock Cambridge University Press, Cambridge, 1995.

\bibitem{Long}
Nicholas Long.
\newblock Fixed point shifts of inert involutions.
\newblock {\em Discrete Contin. Dyn. Syst.}, 25(4):1297--1317, 2009.

\bibitem{MM}
Kengo Matsumoto and Hiroki Matui.
\newblock Continuous orbit equivalence of topological {M}arkov shifts and
  {C}untz-{K}rieger algebras.
\newblock {\em Kyoto J. Math.}, 54(4):863--877, 2014.

\bibitem{Matui}
Hiroki Matui.
\newblock Topological full groups of one-sided shifts of finite type.
\newblock {\em J. Reine Angew. Math.}, 705:35--84, 2015.

\bibitem{NasuSln}
Masakazu Nasu.
\newblock Topological conjugacy for sofic systems and extensions of
  automorphisms of finite subsystems of topological {M}arkov shifts.
\newblock In {\em Dynamical systems ({C}ollege {P}ark, {MD}, 1986--87)}, volume
  1342 of {\em Lecture Notes in Math.}, pages 564--607. Springer, Berlin, 1988.

\bibitem{PS}
W.~Parry and D.~Sullivan.
\newblock A topological invariant of flows on {$1$}-dimensional spaces.
\newblock {\em Topology}, 14(4):297--299, 1975.

\bibitem{Pestov2008}
Vladimir~G. Pestov.
\newblock Hyperlinear and sofic groups: a brief guide.
\newblock {\em Bull. Symbolic Logic}, 14(4):449--480, 2008.

\bibitem{Restorff}
G.~Restorff.
\newblock Classification of {C}untz-{K}rieger algebras up to stable
  isomorphism.
\newblock {\em J. Reine Angew. Math.}, 598:185--210, 2006.

\bibitem{Rordam}
M.~R{\o}rdam.
\newblock Classification of {C}untz-{K}rieger algebras.
\newblock {\em $K$-Theory}, 9(1):31--58, 1995.

\bibitem{Ryan}
J.~Patrick Ryan.
\newblock The shift and commutivity. {II}.
\newblock {\em Math. Systems Theory}, 8(3):249--250, 1974/75.

\bibitem{SaloGroups2015}
Ville Salo.
\newblock Groups and monoids of cellular automata.
\newblock In {\em Cellular automata and discrete complex systems}, volume 9099
  of {\em Lecture Notes in Comput. Sci.}, pages 17--45. Springer, Heidelberg,
  2015.

\bibitem{ST2015}
Ville Salo and Ilkka T\"orm\"a.
\newblock Block maps between primitive uniform and {P}isot substitutions.
\newblock {\em Ergodic Theory Dynam. Systems}, 35(7):2292--2310, 2015.

\bibitem{Schraudner2006}
Michael Schraudner.
\newblock On the algebraic properties of the automorphism groups of
  countable-state {M}arkov shifts.
\newblock {\em Ergodic Theory Dynam. Systems}, 26(2):551--583, 2006.

\bibitem{Schwartzman57}
Sol Schwartzman.
\newblock Asymptotic cycles.
\newblock {\em Ann. of Math. (2)}, 66:270--284, 1957.

\bibitem{WaBAMS}
J.~B. Wagoner.
\newblock Strong shift equivalence theory and the shift equivalence problem.
\newblock {\em Bull. Amer. Math. Soc. (N.S.)}, 36(3):271--296, 1999.

\bibitem{Wa00}
J.~B. Wagoner.
\newblock Strong shift equivalence and {$K_2$} of the dual numbers.
\newblock {\em J. Reine Angew. Math.}, 521:119--160, 2000.
\newblock With an appendix by K. H. Kim and F. W. Roush.

\bibitem{Weiss2000}
Benjamin Weiss.
\newblock Sofic groups and dynamical systems.
\newblock {\em Sankhy\=a Ser. A}, 62(3):350--359, 2000.
\newblock Ergodic theory and harmonic analysis (Mumbai, 1999).

\end{thebibliography}

\end{document}